\newtheorem{thm}{Theorem}[section]
\newtheorem{cor}[thm]{Corollary}
\newtheorem{lem}[thm]{Lemma}
\newtheorem{defin}[thm]{Definition}
\newtheorem{rem}[thm]{Remark}
\numberwithin{equation}{section}
\begin{document}

\begin{frontmatter}



\title{Boundedness of multilinear Littlewood--Paley operators with convolution type kernels on products of BMO spaces}


\author{Runzhe Zhang and Hua Wang}
\address{School of Mathematics and System Science, Xinjiang University,\\
Urumqi 830046, P. R. China\\
\textbf{Dedicated to the memory of Li Xue}}
\ead{wanghua@pku.edu.cn}

\begin{abstract}
In this paper, the authors establish the existence and boundedness of multilinear Littlewood--Paley operators on products of BMO spaces, including the multilinear $g$-function, multilinear Lusin's area integral and multilinear $g^{\ast}_{\lambda}$-function. The authors prove that if the above multilinear operators are finite for a single point, then they are finite almost everywhere. Moreover, it is shown that these multilinear operators are bounded from $\mathrm{BMO}(\mathbb R^n)\times\cdots\times \mathrm{BMO}(\mathbb R^n)$ into $\mathrm{BLO}(\mathbb R^n)$ (the space of functions with bounded lower oscillation), which is a proper subspace of $\mathrm{BMO}(\mathbb R^n)$ (the space of functions with bounded mean oscillation). The corresponding estimates for multilinear Littlewood--Paley operators with non-convolution type kernels are also discussed.
\end{abstract}

\begin{keyword}
Multilinear Littlewood--Paley $g$-function, multilinear Lusin's area integral, multilinear $g^{\ast}_{\lambda}$-function, convolution type kernels, BMO space, BLO space
\MSC[2020] 42B20, 42B25, 42B35
\end{keyword}

\end{frontmatter}

\section{Introduction and preliminaries}

\subsection{Linear Littlewood--Paley operators}
\label{sec11}
In this paper, the sets of all real numbers and natural numbers are denoted by $\mathbb R$ and $\mathbb N$, respectively. Let $n\in \mathbb{N}$ and $\mathbb R^n$ be the $n$-dimensional Euclidean space endowed with the Lebesgue measure $dx$. The Euclidean norm of $x=(x_1,x_2,\dots,x_n)\in \mathbb R^n$ is given by
\begin{equation*}
|x|:=\bigg(\sum_{i=1}^n x_i^2\bigg)^{1/2}.
\end{equation*}
It is well known that the Littlewood--Paley theory is a very important tool in harmonic analysis, complex analysis and PDEs. Littlewood--Paley theory can be viewed as a profound generalization of the Pythagorean theorem. It originated in the 1930's and developed in the 1960's. The Littlewood--Paley function in one dimension was first introduced by Littlewood and Paley in studying the dyadic decomposition of Fourier series (see \cite{littlewood,littlewood2,littlewood3}). The Littlewood--Paley function of higher dimensions was first defined and studied by Stein (see \cite{emstein1,emstein2,stein}). Let us now recall the classical Littlewood--Paley operators on $\mathbb R^n$, which include $g$-function, Lusin's area integral and $g^{\ast}_{\lambda}$-function. Let $u(x,t):=P_t*f(x)$ be the Poisson integral of $f$, where
\begin{equation*}
P_t(x):=c_n\cdot\frac{t}{(t^2+|x|^2)^{{(n+1)}/2}}\quad \& \quad c_n=\frac{\Gamma((n+1)/2)}{\pi^{(n+1)/2}}
\end{equation*}
denotes the Poisson kernel in ${\mathbb R}^{n+1}_+$. Then the classical Littlewood--Paley $g$-function of $f$ is defined by
\begin{equation*}
g(f)(x):=\bigg(\int_0^\infty \big|\nabla u(x,t)\big|^2 t\,dt\bigg)^{1/2},
\end{equation*}
where
\begin{equation*}
\nabla=\Big(\frac{\partial}{\partial t},\frac{\partial}{\partial x_1},\dots,\frac{\partial}{\partial x_n}\Big)\quad \& \quad \big|\nabla u(x,t)\big|^2=\Big|\frac{\partial u}{\partial t}\Big|^2+\sum_{j=1}^n\Big|\frac{\partial u}{\partial x_j}\Big|^2.
\end{equation*}
The classical Lusin's area integral (also referred to as the square function) and Littlewood--Paley $g^{\ast}_{\lambda}$-function are defined, respectively, by
\begin{equation*}
S(f)(x):=\bigg(\iint_{\Gamma(x)}\big|\nabla u(y,t)\big|^2 t^{1-n}\,dydt\bigg)^{1/2}
\end{equation*}
and
\begin{equation*}
g^*_\lambda(f)(x):=\bigg(\iint_{{\mathbb R}^{n+1}_+}\bigg(\frac t{t+|x-y|}\bigg)^{\lambda n}
\big|\nabla u(y,t)\big|^2 t^{1-n}\,dydt\bigg)^{1/2}, \quad \lambda>1,
\end{equation*}
where
\begin{equation*}
\Gamma(x):=\Big\{(y,t)\in{\mathbb R}^{n+1}_+:|y-x|<t\Big\}~~\mbox{and}~~
{\mathbb R}^{n+1}_+=\Big\{(y,t)\in{\mathbb R}^{n+1}:y\in\mathbb R^n,t>0\Big\}.
\end{equation*}

\begin{itemize}
  \item By the Plancherel formula, we can easily see that the classical Littlewood--Paley operators are bounded on $L^2(\mathbb R^n)$.
  \item Let $1<p<\infty$. Stein proved that the Littlewood--Paley $g$-function can characterize $L^p$ spaces. Moreover, there exist two positive constants $C_1$ and $C_2$, independent of $f$, such that
  \begin{equation}\label{11}
  C_1\|f\|_{L^p}\leq\|g(f)\|_{L^p}\leq C_2\|f\|_{L^p},
  \end{equation}
  for every $f\in L^p(\mathbb R^n)$. The above estimate also holds for Lusin's area integral $S(f)$ and Littlewood--Paley $g^{\ast}_{\lambda}$-function $g^*_\lambda(f)$ when $\lambda>2$. For the proofs of these results, see Stein \cite{emstein1,emstein2,stein} and Fefferman \cite{fe}.
\end{itemize}
It is well known that $u(x,t)=P_t*f(x)$ satisfies Laplace's equation $\Delta u=0$ in ${\mathbb R}^{n+1}_+$,
\begin{equation*}
\Delta u(x,t)=\Delta_x u(x,t)+\frac{\partial^2u(x,t)}{\partial t^2}=0,
\end{equation*}
and has boundary values equal to $f$, in the sense that
\begin{equation*}
\lim_{t\to 0}u(x,t)=\lim_{t\to 0}P_t*f(x)=f(x)
\end{equation*}
almost everywhere. Moreover, $u(\cdot,t)\rightarrow f(\cdot)$ in $L^p(\mathbb R^n)$ if $f\in L^p(\mathbb R^n)$ with $1\leq p<\infty$. The estimates for classical Littlewood--Paley operators rely heavily on tricks from classical harmonic analysis and partial differential equations (Green's theorem, the mean value property of harmonic functions, etc).

We now consider the following more general Littlewood--Paley operators on $\mathbb R^n$. Let $\psi$ be a real-valued function on $\mathbb R^n$ satisfying the following three conditions.
\begin{itemize}
  \item[(i)] (\textbf{The vanishing condition}):
  \begin{equation}\label{psi1}
  \psi\in L^1(\mathbb R^n)\quad \mbox{and} \quad \int_{\mathbb R^n}\psi(x)\,dx=0;
  \end{equation}
  \item[(ii)] (\textbf{the size condition}): there exist two positive constants $C_1$ and $\delta$ such that
  \begin{equation}\label{psi2}
  |\psi(x)|\leq C_1\cdot\frac{1}{(1+|x|)^{n+\delta}};
  \end{equation}
  \item[(iii)] (\textbf{the smoothness condition}): there exist two positive constants $C_2$ and $\gamma$ such that
  \begin{equation}\label{psi3}
  |\psi(x+y)-\psi(x)|\leq C_2\cdot\frac{|y|^{\gamma}}{(1+|x|)^{n+\delta+\gamma}},
  \end{equation}
  whenever $2|y|\leq |x|$.
\end{itemize}
For such a function $\psi$, the generalized Littlewood--Paley $g$-function $g_{\psi}$, Lusin's area integral $S_{\psi}$ and Littlewood--Paley $g^{\ast}_{\lambda}$-function $g^*_{\lambda,\psi}$ are defined as follows:
\begin{equation*}
g_{\psi}(f)(x):=\bigg(\int_0^\infty \big|\psi_t*f(x)\big|^2\frac{dt}{t}\bigg)^{1/2},
\end{equation*}
\begin{equation*}
S_{\psi}(f)(x):=\bigg(\iint_{\Gamma(x)}\big|\psi_t*f(y)\big|^2\frac{dydt}{t^{n+1}}\bigg)^{1/2},
\end{equation*}
and
\begin{equation*}
g^*_{\lambda,\psi}(f)(x):=\bigg(\iint_{{\mathbb R}^{n+1}_+}\bigg(\frac t{t+|x-y|}\bigg)^{\lambda n}
\big|\psi_t*f(y)\big|^2\frac{dydt}{t^{n+1}}\bigg)^{1/2}, \quad \lambda>1,
\end{equation*}
where for any function $\psi$ and for any $t\in(0,\infty)$, we denote
\begin{equation*}
\psi_t(x):=\frac{1}{t^n}\psi\Big(\frac{x}{\,t\,}\Big).
\end{equation*}
\begin{itemize}
  \item Denote by $\mathcal{G}$ the collection of all functions $\psi$ satisfying \eqref{psi1}, \eqref{psi2} and \eqref{psi3}.
  By the classical theory of vector-valued singular integral operators, we can also obtain the strong-type $(p,p)$ ($1<p<\infty$) and weak-type $(1,1)$ estimates for generalized (real-variable) Littlewood--Paley operators, including the generalized Littlewood--Paley $g$-function,Lusin's area integral and Littlewood--Paley $g^{\ast}_{\lambda}$-function (these operators are sublinear and non-negative).
  \item As in \eqref{11}, it was shown that the generalized Littlewood--Paley $g$-function $g_{\psi}$ can also characterize $L^p$ spaces. For any $1<p<\infty$ and $\psi\in\mathcal{G}$, then there exist two positive constants $C_1$ and $C_2$, independent of $f$, such that
  \begin{equation}\label{12}
  C_1\|f\|_{L^p}\leq\|g_{\psi}(f)\|_{L^p}\leq C_2\|f\|_{L^p},
  \end{equation}
  for all $f\in L^p(\mathbb R^n)$. Moreover, the above estimate also holds for generalized Lusin's area integral $S_{\psi}(f)$ and Littlewood--Paley $g^{\ast}_{\lambda}$-function $g^*_{\lambda,\psi}(f)$ when $\lambda>2$. More details can be found in
  \cite[Chapter 5]{lu}, \cite[Chapter XII]{tor}, \cite[Chapter 6]{wilson2} and \cite[Theorem 1.1]{xueding}.
\end{itemize}
A few historical remarks are given as follows:
\begin{enumerate}
  \item In 1985, Wang \cite{wang} first studied the behavior of classical Littlewood--Paley $g$-function acting on $L^{\infty}(\mathbb R^n)$ and $\mathrm{BMO}(\mathbb R^n)$, and proved the following result. If $f\in \mathrm{BMO}(\mathbb R^n)$, then $g(f)$ is either
infinite everywhere or finite almost everywhere, and in the latter case, there is a positive constant $C$ depending only on the dimension $n$ such that
\begin{equation*}
\big\|g(f)\big\|_{\mathrm{BMO}}\leq C\big\|f\big\|_{\mathrm{BMO}}.
\end{equation*}
The above interesting result also holds for the classical Lusin's area integral $S(f)$ and Littlewood--Paley $g^{\ast}_{\lambda}$-function $g^{\ast}_{\lambda}(f)$, which was established by Kurtz \cite{k} in 1987.
  \item Subsequently, in 2004, Sun \cite{sun} and Yabuta \cite{ya} studied the existence and boundedness properties of generalized Littlewood--Paley operators on BMO spaces (and Campanato spaces), and proved the following result. Suppose that $\psi\in L^1(\mathbb R^n)$ satisfies \eqref{psi1}, \eqref{psi2} with $\delta=1$, and the condition
\begin{equation}\label{psi4}
\big|\nabla\psi(x)\big|\leq C_2\cdot\frac{1}{(1+|x|)^{n+2}},
\end{equation}
where $\nabla:=(\partial/{\partial x_1},\dots,\partial/{\partial x_n})$ and $C_2$ is a positive constant independent of $x=(x_1,\dots,x_n)\in\mathbb R^n$. Then the generalized Littlewood--Paley $g$-function $g_{\psi}$ is bounded on $\mathrm{BMO}(\mathbb R^n)$. More precisely, if $f\in \mathrm{BMO}(\mathbb R^n)$ and $g_{\psi}(f)(x_0)<+\infty$ for a single point $x_0\in \mathbb R^n$, then $g_{\psi}(f)$ is finite almost everywhere, and there exists a positive constant $C>0$, independent of $f$, such that
\begin{equation*}
\big\|g_{\psi}(f)\big\|_{\mathrm{BMO}}\leq C\big\|f\big\|_{\mathrm{BMO}}.
\end{equation*}
Similar results for generalized Lusin's area integral and Littlewood--Paley $g^{\ast}_{\lambda}$-function were also obtained in \cite{sun,ya}.
\item In 1990, Leckband \cite{leck} established the boundedness of the square of the Littlewood--Paley $g$-function, Lusin's area integral and Littlewood--Paley $g^{\ast}_{\lambda}$-function from $L^{\infty}(\mathbb R^n)$ into $\mathrm{BLO}(\mathbb R^n)$, which is a proper subspace of $\mathrm{BMO}(\mathbb R^n)$. More precisely, Leckband proved that if $f\in L^{\infty}(\mathbb R^n)$, then there exists a positive constant $C>0$, independent of $f$, such that
\begin{equation}\label{citeleck}
\big\|[T_{g}(f)]^2\big\|_{\mathrm{BLO}}\leq C\big\|f\big\|_{L^{\infty}}^2,
\end{equation}
where $T_{g}(f)$ denotes any one of the usual classical or generalized Littlewood--Paley functions (see \cite[Theorem 1]{leck}).
\end{enumerate}

In 2008, Meng and Yang \cite{meng} further discussed the behavior of generalized Littlewood--Paley operators on BMO spaces. Let $g_{\psi}(f)$ be the generalized Littlewood--Paley $g$-function of $f$ on $\mathbb R^n$. Meng and Yang proved that if $f\in \mathrm{BMO}(\mathbb R^n)$, then $g_{\psi}(f)$ is either infinite everywhere or finite almost everywhere, and in the latter case, $[g_{\psi}(f)]^2$ is bounded from $\mathrm{BMO}(\mathbb R^n)$ into $\mathrm{BLO}(\mathbb R^n)$(see \cite[Theorem 1.1]{meng}), which is an improvement of the result of Leckband.

\begin{thm}\label{thm12}
Suppose that $\psi\in L^1(\mathbb R^n)$ satisfies \eqref{psi1}, \eqref{psi2} with $\delta=1$ and \eqref{psi4}. If $f\in \mathrm{BMO}(\mathbb R^n)$, then $g_{\psi}(f)$ is either infinite everywhere or finite almost everywhere, and in the latter case, there exists a positive constant $C>0$, independent of $f$, such that
\begin{equation*}
\big\|[g_{\psi}(f)]^2\big\|_{\mathrm{BLO}}\leq C\big\|f\big\|_{\mathrm{BMO}}^2.
\end{equation*}
\end{thm}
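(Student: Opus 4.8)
The plan is to treat the dichotomy and the $\mathrm{BLO}$ estimate separately, and in both cases to split the defining integral of $[g_{\psi}(f)]^2$ at the scale of the cube under consideration. The only properties of $\psi$ I use are: the vanishing condition, which gives $\psi_t*c=0$ for every constant $c$ (so $\psi_t*f=\psi_t*(f-f_B)$ for any ball $B$); the size bound \eqref{psi2} with $\delta=1$; the fact that \eqref{psi4} forces \eqref{psi3} with $\gamma=\delta=1$ (mean value theorem); and the $L^2(\mathbb R^n)$-boundedness of $g_{\psi}$ recalled above. Two elementary estimates, proved first, are used repeatedly: (a) for every $z$ and $t>0$, $|\psi_t*f(z)|\le C\|f\|_{\mathrm{BMO}}$ (subtract the mean of $f$ over $B(z,t)$ and sum the size bound over dyadic annuli, using $|f_{B(z,2^jt)}-f_{B(z,t)}|\lesssim (j+1)\|f\|_{\mathrm{BMO}}$); and (b) if $x,x'\in Q$ and $t\ge\ell(Q)$, then $|\psi_t*f(x)-\psi_t*f(x')|\le C\,\ell(Q)\,t^{-1}\bigl(1+\log(t/\ell(Q))\bigr)\|f\|_{\mathrm{BMO}}$ (subtract a constant, and split the kernel difference into the region $|x-y|\ge 4|x-x'|$, where \eqref{psi3} gives $|\psi_t(x-y)-\psi_t(x'-y)|\lesssim |x-x'|\,t\,(t+|x-y|)^{-n-2}$, and the complementary ``near'' region, where $|\psi_t(x-y)-\psi_t(x'-y)|\lesssim t^{-n}$ and the integral over a ball of radius $\sim\ell(Q)$ contributes the logarithm).

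For the dichotomy, suppose $g_{\psi}(f)(x_0)<\infty$ for one $x_0$ and fix an arbitrary $R>0$; it suffices to show $g_{\psi}(f)<\infty$ a.e.\ on $B(x_0,R)$. With $\widetilde B=B(x_0,4R)$, write $f=f_{\widetilde B}+h_1+h_2$, $h_1=(f-f_{\widetilde B})\chi_{\widetilde B}$, $h_2=(f-f_{\widetilde B})\chi_{\widetilde B^{c}}$. Minkowski's inequality in $L^2(dt/t)$ gives $g_{\psi}(f)(x)\le g_{\psi}(f)(x_0)+\bigl(\int_0^{\infty}|\psi_t*f(x)-\psi_t*f(x_0)|^2\,dt/t\bigr)^{1/2}$; by (b) the part $t\ge 2R$ of the error is $\lesssim\|f\|_{\mathrm{BMO}}$, and for $t<2R$, $x\in B(x_0,R)$ the support of $h_2$ lies at distance $\ge 3R>t$ from $x$, so $|\psi_t*h_2(x)|\lesssim (t/R)\|f\|_{\mathrm{BMO}}$ and the part $t<2R$ is dominated by $g_{\psi}(h_1)(x)+g_{\psi}(f)(x_0)+C\|f\|_{\mathrm{BMO}}$. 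As $h_1\in L^2(\mathbb R^n)$, $g_{\psi}(h_1)<\infty$ a.e., and the dichotomy follows; from now on assume $g_{\psi}(f)<\infty$ a.e.

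Now fix a cube $Q$ and write $[g_{\psi}(f)(x)]^2=P_Q(x)+T_Q(x)$ with $P_Q(x)=\int_0^{\ell(Q)}|\psi_t*f(x)|^2\,dt/t$ and $T_Q(x)=\int_{\ell(Q)}^{\infty}|\psi_t*f(x)|^2\,dt/t$. For $P_Q$, split $f-f_{2Q}=b_1+b_2$ with $b_1=(f-f_{2Q})\chi_{2Q}$; for $x\in Q$, $t<\ell(Q)$ the support of $b_2$ is at distance $\ge\ell(Q)/2>t/2$ from $x$, so the size bound gives $|\psi_t*b_2(x)|\lesssim (t/\ell(Q))\|f\|_{\mathrm{BMO}}$, whence $P_Q(x)\le 2[g_{\psi}(b_1)(x)]^2+C\|f\|_{\mathrm{BMO}}^2$; integrating over $Q$ and using the $L^2$-boundedness of $g_{\psi}$ with John--Nirenberg ($|2Q|^{-1}\int_{2Q}|f-f_{2Q}|^2\lesssim\|f\|_{\mathrm{BMO}}^2$) gives $|Q|^{-1}\int_Q P_Q\lesssim\|f\|_{\mathrm{BMO}}^2$, and $P_Q\ge 0$. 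For $T_Q$, combine (a) and (b) by factoring $|\psi_t*f(x)|^2-|\psi_t*f(x')|^2$ into a difference times a sum: for a.e.\ $x,x'\in Q$,
\begin{equation*}
|T_Q(x)-T_Q(x')|\le\int_{\ell(Q)}^{\infty}\big|\psi_t*f(x)-\psi_t*f(x')\big|\,\bigl(|\psi_t*f(x)|+|\psi_t*f(x')|\bigr)\,\frac{dt}{t}\lesssim\|f\|_{\mathrm{BMO}}^{2}\int_{1}^{\infty}\frac{1+\log s}{s^{2}}\,ds\lesssim\|f\|_{\mathrm{BMO}}^{2},
\end{equation*}
so $\operatorname*{ess\,sup}_Q T_Q-\operatorname*{ess\,inf}_Q T_Q\lesssim\|f\|_{\mathrm{BMO}}^2$. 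Since $P_Q\ge 0$ we have $\operatorname*{ess\,inf}_Q[g_{\psi}(f)]^2\ge\operatorname*{ess\,inf}_Q T_Q$, hence
\begin{equation*}
\frac{1}{|Q|}\int_Q\Bigl([g_{\psi}(f)]^2-\operatorname*{ess\,inf}_Q[g_{\psi}(f)]^2\Bigr)\,dx\le\frac{1}{|Q|}\int_Q P_Q\,dx+\Bigl(\operatorname*{ess\,sup}_Q T_Q-\operatorname*{ess\,inf}_Q T_Q\Bigr)\lesssim\|f\|_{\mathrm{BMO}}^2,
\end{equation*}
which is the claimed $\mathrm{BLO}$ estimate.

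The crux is $T_Q$, and this is where the naive approach fails: $T_Q$ is comparable to the tail of $[g_{\psi}(f)]^2$ and is typically \emph{unbounded}, so it admits no pointwise control, and the ``square-root trick'' (which only shows $T_Q^{1/2}$ has bounded mean oscillation) is too weak for a $\mathrm{BLO}$ estimate. The fix is to estimate the oscillation of $T_Q$ itself via the factorization above: the difference $|\psi_t*f(x)-\psi_t*f(x')|$ decays like $t^{-1}\log t$ by (b), while the sum $|\psi_t*f(x)|+|\psi_t*f(x')|$ is only $O(\|f\|_{\mathrm{BMO}})$ by (a) and does not decay, yet their product is integrable against $dt/t$ on $(\ell(Q),\infty)$ with a bound independent of $x,x'$ and of $\ell(Q)$. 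Two minor points need care: in (b) and in the dichotomy one must isolate the small region where \eqref{psi3} is not directly applicable and treat it via the size bound (the source of the harmless logarithm), and since the pointwise estimates only yield finiteness almost everywhere, the dichotomy must be established before the $\mathrm{BLO}$ estimate.
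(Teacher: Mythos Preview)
Your proof is correct and follows essentially the same strategy as the paper's proof of the multilinear generalization (Theorem~\ref{mainthm1}, whose $m=1$ specialization is exactly this statement): split $[g_\psi(f)]^2$ at $t=r$, control the average of the small-$t$ piece $P_Q$ via the $L^2$-boundedness of $g_\psi$ on the local part and the size condition on the far part, and bound the oscillation of the large-$t$ piece $T_Q$ by factoring $|T_Q(x)-T_Q(x')|$ as (sum)$\times$(difference) and combining the uniform pointwise bound $|\psi_t*f|\lesssim\|f\|_{\mathrm{BMO}}$ with the smoothness estimate for the difference. The only cosmetic distinction is that in your estimate (b) you subtract a $t$-scale mean and carry a harmless $\log(t/\ell(Q))$ factor, whereas the paper subtracts the fixed mean $(f)_{2\mathcal B}$ and integrates in $t$ before summing over dyadic annuli, absorbing the logarithmic growth into $\sum_j j\,2^{-j\gamma}$; structurally the arguments are the same.
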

Similar results for generalized Lusin's area integral and Littlewood--Paley $g^{\ast}_{\lambda}$-function were also obtained by Meng and Yang(see \cite[Theorems 1.2 and 1.3]{meng}). The corresponding estimates for Marcinkiewicz integrals can be found in \cite{hu}.

We remark that the condition \eqref{psi4} implies \eqref{psi3}, by applying the mean value theorem. Arguing as in the proof of Theorem 1.1 in \cite{meng}, we can also show that the conclusions of the above theorem still hold for the generalized Littlewood--Paley operators $g_{\psi}$, $S_{\psi}$ and $g^{\ast}_{\psi}$, under the conditions \eqref{psi1}, \eqref{psi2} and \eqref{psi3} on $\psi$.

\subsection{Multilinear Littlewood--Paley operators}
\label{sec13}
In recent years, the theory of multilinear operators in harmonic analysis has attracted much attention, see, for example, \cite{grafakos2,grafakos3,grafakos4,grafakos5,lerner,li1,li2} and the references therein.
\footnote{The multilinear (Calder\'{o}n--Zygmund) theory was originated in the works of Coifman and Meyer. Later on this theory was systematically studied by Grafakos and Torres. Multilinear Calder\'{o}n--Zygmund theory is a natural generalization of the linear case.}

Let $2\leq m\in\mathbb N$ and $(\mathbb R^n)^m=\overbrace{\mathbb R^n\times\cdots\times\mathbb R^n}^m$ be the $m$-fold product space. Suppose that each function $f_i$ is locally integrable on $\mathbb R^n$, $i=1,2,\dots,m$. Let $\vec{f}$ denote the vector function $\vec{f}:=(f_1,f_2,\dots,f_m)$. 
Recently, the theory of multilinear Littlewood--Paley operators was first introduced and studied by Xue et al. in \cite{he,shi,xueqing,xuepeng}. The class of multilinear Littlewood--Paley operators with standard convolution type kernels provides the starting point of the theory (see \cite{shi,xuepeng}). We first recall the definition of the multilinear Littlewood--Paley kernel (of convolution type).
\begin{defin}[\cite{shi,xuepeng}]\label{defin12}
Let $2\leq m\in\mathbb N$. We say that a function $\mathcal{K}(y_1,y_2,\dots,y_m)$ defined on $(\mathbb R^n)^m$ is a multilinear Littlewood--Paley kernel, if the following three conditions are satisfied.
\begin{enumerate}
  \item (\textbf{The vanishing condition}): for $i=1,2,\dots,m$,
  \begin{equation*}
  \int_{\mathbb R^n}\mathcal{K}(y_1,\dots,y_i,\dots,y_m)\,dy_i=0;
  \end{equation*}
  \item (\textbf{the size condition}): for some positive constants $C$ and $\delta$,
  \begin{equation*}
  \big|\mathcal{K}(y_1,\dots,y_i,\dots,y_m)\big|\leq C\cdot\frac{1}{(1+\sum_{j=1}^m|y_j|)^{mn+\delta}};
  \end{equation*}
  \item (\textbf{the smoothness condition}): for some positive constants $C$ and $\gamma$,
  \begin{equation*}
  \big|\mathcal{K}(y_1,\dots,y_i,\dots,y_m)-\mathcal{K}(y_1,\dots,y'_i,\dots,y_m)\big|
  \leq C\cdot\frac{|y_i-y_i'|^{\gamma}}{(1+\sum_{j=1}^m|y_j|)^{mn+\delta+\gamma}},
  \end{equation*}
  whenever $2|y_i-y_i'|\leq \max_{1\leq j\leq m}|y_j|$ for all $1\leq i\leq m$.
\end{enumerate}
\end{defin}
Now we give the definition of the multilinear Littlewood--Paley operators, including multilinear $g$-function, multilinear Lusin's area integral and multilinear Littlewood--Paley $g^{\ast}_{\lambda}$-function with convolution type kernels.

\begin{defin}[\cite{shi,xuepeng}]
For any $\vec{f}=(f_1,\dots,f_m)\in \overbrace{\mathcal{S}(\mathbb R^n)\times\cdots\times \mathcal{S}(\mathbb R^n)}^m$ and any $t>0$, we denote
\begin{equation*}
\mathcal{K}_t(y_1,y_2,\dots,y_m):=\frac{1}{t^{mn}}\mathcal{K}\Big(\frac{y_1}{t},\frac{y_2}{t},\dots,\frac{y_m}{t}\Big),
\end{equation*}
and
\begin{equation*}
\mathcal{G}_t(\vec{f})(x):=\int_{(\mathbb R^n)^m}\mathcal{K}_t(x-y_1,\dots,x-y_m)\prod_{i=1}^m f_i(y_i)\,dy_i,
\quad \mbox{for all}\;\,x\notin\bigcap_{i=1}^m \mathrm{supp}\, f_i,
\end{equation*}
for a given multilinear Littlewood--Paley kernel $\mathcal{K}$. Then the multilinear Littlewood--Paley $g$-function, multilinear Lusin's area integral and multilinear Littlewood--Paley $g^{\ast}_{\lambda}$-function with convolution type kernels are defined, respectively, by
\begin{equation*}
g(\vec{f})(x):=\bigg(\int_0^{\infty}\big|\mathcal{G}_t(\vec{f})(x)\big|^2\frac{dt}{t}\bigg)^{1/2},~~
S(\vec{f})(x):=\bigg(\iint_{\Gamma(x)}\big|\mathcal{G}_t(\vec{f})(z)\big|^2\frac{dzdt}{t^{n+1}}\bigg)^{1/2},
\end{equation*}
and
\begin{equation*}
g^{\ast}_{\lambda}(\vec{f})(x):=\bigg(\iint_{\mathbb{R}^{n+1}_{+}}\Big(\frac{t}{t+|x-z|}\Big)^{\lambda n}
\big|\mathcal{G}_t(\vec{f})(z)\big|^2\frac{dzdt}{t^{n+1}}\bigg)^{1/2},\quad \lambda>1.
\end{equation*}
Throughout this paper, we will always assume that $\mathcal{T}_{g}$ can be extended to be a bounded multilinear operator for some $1\leq q_1,q_2,\dots,q_m<\infty$, $0<q<\infty$ with $1/q=\sum_{i=1}^m 1/{q_i}$; that is,
\begin{equation*}
\mathcal{T}_{g}:L^{q_1}(\mathbb R^n)\times L^{q_2}(\mathbb R^n)\times \cdots\times L^{q_m}(\mathbb R^n)\rightarrow L^q(\mathbb R^n),
\end{equation*}
where $\mathcal{T}_{g}(\vec{f})$ denotes any one of the multilinear Littlewood--Paley functions. Here we use the standard notation $\mathcal{S}(\mathbb R^n)$ for the Schwartz space of test functions on $\mathbb R^n$.
\end{defin}

When $m=1$, this definition coincides with the one given in Section \ref{sec11}.

The multilinear Littlewood--Paley $g$-function was first defined and studied by Xue--Peng--Yabuta \cite{xuepeng} in 2015. The multilinear Littlewood--Paley $g^{\ast}_{\lambda}$-function was first defined and studied by Shi--Xue--Yabuta \cite{shi} in 2014. The multilinear Littlewood--Paley operator is a natural generalization of the linear case. Thus it is natural and interesting to study the generalizations of \eqref{12} in the multilinear setting. The strong-type and weak-type estimates of multilinear Littlewood--Paley $g$-function and Lusin's area integral were given in \cite{xuepeng} and \cite{xueqing}.
The strong-type and weak-type estimates of multilinear Littlewood--Paley $g^{\ast}_{\lambda}$-function were also obtained in \cite{shi} and \cite{xueqing}. Based on the above results, in 2015, He--Xue--Mei--Yabuta further studied the existence and boundedness of multilinear Littlewood--Paley operators on BMO spaces (and Campanato spaces), and obtained the following BMO type estimates (see \cite[Corollary 1.3 and Corollary 1.4]{he}).

\begin{thm}[\cite{he}]\label{thm18}
For any $f_1,f_2\in \mathrm{BMO}(\mathbb R^n)$, if $g(f_1,f_2)$ is finite for a single point $x_0\in \mathbb R^n$, then $g(f_1,f_2)$ is finite almost everywhere on $\mathbb R^n$, and there exists a positive constant $C>0$, independent of $f_1$ and $f_2$, such that
\begin{equation*}
\big\|g(f_1,f_2)\big\|_{\mathrm{BMO}}\leq C\big\|f_1\big\|_{\mathrm{BMO}}\big\|f_2\big\|_{\mathrm{BMO}}.
\end{equation*}
If $S(f_1,f_2)$ is finite for a single point $x_0\in \mathbb R^n$, then $S(f_1,f_2)$ is finite almost everywhere on $\mathbb R^n$, and there exists a positive constant $C>0$, independent of $f_1$ and $f_2$, such that
\begin{equation*}
\big\|S(f_1,f_2)\big\|_{\mathrm{BMO}}\leq C\big\|f_1\big\|_{\mathrm{BMO}}\big\|f_2\big\|_{\mathrm{BMO}}.
\end{equation*}
\end{thm}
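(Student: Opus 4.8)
\medskip
\noindent\textbf{Proof strategy.}
The key observation is that for a fixed $x$ the quantity $g(\vec f)(x)$ is the norm of $t\mapsto\mathcal{G}_t(\vec f)(x)$ in $L^2\big((0,\infty),\tfrac{dt}{t}\big)$, while $S(\vec f)(x)$ is the norm of $(z,t)\mapsto\chi_{\Gamma(x)}(z,t)\,\mathcal{G}_t(\vec f)(z)$ in $L^2\big(\mathbb{R}^{n+1}_+,\tfrac{dz\,dt}{t^{n+1}}\big)$; the triangle and reverse triangle inequalities for these norms reduce everything to pointwise estimates on $\mathcal{G}_t$. Given a ball $B=B(x_B,r_B)$, I would write $f_i=f_i^0+f_i^\infty+(f_i)_{cB}$ with $f_i^0=(f_i-(f_i)_{cB})\chi_{cB}$ and $f_i^\infty=(f_i-(f_i)_{cB})\chi_{(cB)^c}$, where $c$ is a fixed dilation constant chosen large enough (say $c=8$) that the smoothness condition applies below; by the multilinearity of $\mathcal{G}_t$ and the vanishing condition of Definition~\ref{defin12}(1) (integrating $\mathcal{K}$ in any slot carrying a constant annihilates that contribution) one gets $\mathcal{G}_t(\vec f)=\sum_{\sigma\in\{0,\infty\}^2}\mathcal{G}_t(f_1^{\sigma_1},f_2^{\sigma_2})$, so the $3^2$ terms collapse to four: the "local" term $(0,0)$ and three terms carrying at least one tail $f_i^\infty$.

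For the dichotomy I would fix $x_0$ with $g(\vec f)(x_0)<\infty$ (otherwise there is nothing to prove), fix a large ball $B=B(x_0,R)$, decompose relative to $cB$, prove $g(\vec f)<\infty$ a.e.\ on $B$, and let $R\to\infty$. Split $g(\vec f)(x)^2=\int_0^1+\int_1^\infty$. The part over $(0,1)$ is treated unconditionally: the $(0,0)$-contribution is $\le g(f_1^0,f_2^0)(x)^2$, finite a.e.\ because $f_i^0\in L^{q_i}$ and $\mathcal{T}_g$ maps $L^{q_1}\times L^{q_2}$ into $L^q$; for each term with a factor $f_i^\infty$ (supported at distance $\gtrsim R$ from $x$) a dyadic decomposition of the tails together with the size condition and John--Nirenberg gives $|\mathcal{G}_t(f_1^{\sigma_1},f_2^{\sigma_2})(x)|\lesssim (t/R)^{\delta}\|f_1\|_{\mathrm{BMO}}\|f_2\|_{\mathrm{BMO}}$ for $0<t\le R$, square-integrable against $\tfrac{dt}{t}$ near $0$. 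On $(1,\infty)$ I would use $|\mathcal{G}_t(\vec f)(x)|\le|\mathcal{G}_t(\vec f)(x_0)|+|\mathcal{G}_t(\vec f)(x)-\mathcal{G}_t(\vec f)(x_0)|$; the first term contributes $\le g(\vec f)(x_0)^2<\infty$, and after subtracting the (harmless) constants $(f_i)_{B(x_0,t)}$ the smoothness condition plus a dyadic/John--Nirenberg estimate gives $|\mathcal{G}_t(\vec f)(x)-\mathcal{G}_t(\vec f)(x_0)|\lesssim(|x-x_0|/t)^{\gamma}\|f_1\|_{\mathrm{BMO}}\|f_2\|_{\mathrm{BMO}}$ for $t\ge 2|x-x_0|$, square-integrable against $\tfrac{dt}{t}$ at infinity (the finite range $1\le t\le 2|x-x_0|$ is handled by a crude bound). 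Hence $g(\vec f)(x)<\infty$ for a.e.\ $x$; for $S(\vec f)$ the same split works with point values replaced by cone integrals.

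For the norm bound, fix an arbitrary ball $B$, decompose relative to $cB$, and estimate, for a.e.\ $x,x'\in B$,
\begin{equation*}
|g(\vec f)(x)-g(\vec f)(x')|\le\sum_{\sigma\in\{0,\infty\}^2}\Big(\int_0^\infty\big|\mathcal{G}_t(f_1^{\sigma_1},f_2^{\sigma_2})(x)-\mathcal{G}_t(f_1^{\sigma_1},f_2^{\sigma_2})(x')\big|^2\,\tfrac{dt}{t}\Big)^{1/2}.
\end{equation*}
The $(0,0)$-term is bounded by $g(f_1^0,f_2^0)(x)+g(f_1^0,f_2^0)(x')$, whose average over $B\times B$ equals $\tfrac{2}{|B|}\int_B g(f_1^0,f_2^0)$, which is $\lesssim\|f_1\|_{\mathrm{BMO}}\|f_2\|_{\mathrm{BMO}}$ by Hölder's inequality (we may assume $q\ge1$, e.g.\ with $q_1=q_2=4$, using the available strong-type bounds), the $L^{q_1}\times L^{q_2}\to L^q$ estimate, and John--Nirenberg ($\|f_i^0\|_{L^{q_i}}\lesssim|B|^{1/q_i}\|f_i\|_{\mathrm{BMO}}$). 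For the three terms carrying a factor $f_i^\infty$ I would show the difference is $\lesssim\|f_1\|_{\mathrm{BMO}}\|f_2\|_{\mathrm{BMO}}$ for \emph{every} $x,x'\in B$: for $t\lesssim r_B$ the kernels are evaluated at points of size $\gtrsim r_B\gg t$, so the size condition yields an extra $(t/r_B)^{\delta}$, while for $t\gtrsim r_B$ the smoothness condition yields an extra $(|x-x'|/t)^{\gamma}$; since $|x-x'|\le 2r_B$, integrating the square against $\tfrac{dt}{t}$ over $(0,\infty)$ converges — and it still converges after the dyadic summation over the tails costs a factor $(\log(2+t/r_B))^{2}$, because $\int_{r_B}^\infty(r_B/t)^{2\gamma}(\log(2+t/r_B))^{4}\,\tfrac{dt}{t}<\infty$. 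Applying the resulting pointwise bound with one point frozen at an a.e.\ good point shows $g(\vec f)\in L^1(B)$, so I may take $c_B=\tfrac1{|B|}\int_B g(\vec f)$ and conclude
\begin{equation*}
\tfrac1{|B|}\int_B|g(\vec f)(x)-c_B|\,dx\le\tfrac1{|B|^2}\iint_{B\times B}|g(\vec f)(x)-g(\vec f)(x')|\,dx\,dx'\lesssim\|f_1\|_{\mathrm{BMO}}\|f_2\|_{\mathrm{BMO}},
\end{equation*}
and take the supremum over $B$. For $S$ the argument is identical except that for the $(\infty,\infty)$-term the kernel need not be translated: $|S(f_1^\infty,f_2^\infty)(x)-S(f_1^\infty,f_2^\infty)(x')|$ is controlled by the $L^2\big(\tfrac{dz\,dt}{t^{n+1}}\big)$-mass of $\mathcal{G}_t(f_1^\infty,f_2^\infty)(z)$ over the symmetric difference $\Gamma(x)\triangle\Gamma(x')$, which is confined to $|z-x_B|\sim t$ and has $t$-section of measure $\lesssim t^{n-1}\min\{t,r_B\}$, and the same pointwise bounds on $\mathcal{G}_t(f_1^\infty,f_2^\infty)$ close the estimate.

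The main obstacle is the $(\infty,\infty)$-term. Unlike in the $L^p$ theory, one \emph{cannot} bound $\big(\int_0^\infty|\mathcal{G}_t(f_1^\infty,f_2^\infty)(x)-\mathcal{G}_t(f_1^\infty,f_2^\infty)(x')|^2\tfrac{dt}{t}\big)^{1/2}$ by $g(f_1^\infty,f_2^\infty)(x)+g(f_1^\infty,f_2^\infty)(x')$, since the individual quantities $g(f_1^\infty,f_2^\infty)(x)$ may genuinely be infinite — this is precisely the mechanism behind the "infinite everywhere or finite a.e." dichotomy, as the size condition alone only yields $|\mathcal{G}_t(f_1^\infty,f_2^\infty)(x)|\lesssim(\log(2+t/r_B))^{2}\|f_1\|_{\mathrm{BMO}}\|f_2\|_{\mathrm{BMO}}$ for large $t$, which is not square-integrable against $\tfrac{dt}{t}$. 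One is forced to keep the difference intact and to extract the decay $(|x-x'|/t)^{\gamma}$ from the smoothness condition, verifying that it defeats the logarithmic growth produced by controlling the tails through a dyadic decomposition and John--Nirenberg. Organising these dyadic sums — and, for $S$, the geometry of the cones $\Gamma(x)$ and $\Gamma(x')$ near their common boundary — is the technically heaviest part of the argument.
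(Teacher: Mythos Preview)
Your strategy is sound and would give the stated BMO bound. Note, however, that the paper does not prove Theorem~\ref{thm18} directly: it is quoted from \cite{he}, and the paper instead establishes the stronger fact that $[g(\vec f)]^2$ and $[S(\vec f)]^2$ lie in $\mathrm{BLO}$ (Theorems~\ref{mainthm1} and~\ref{mainthm2} with $m=2$), from which Theorem~\ref{thm18} follows via \eqref{BLOsquare} and \eqref{bmoblo}.

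The two routes differ in organisation. You decompose each $f_i$ into local/tail/constant first and control the BMO oscillation $|g(\vec f)(x)-g(\vec f)(x')|$ term by term, keeping the $(\infty,\infty)$-difference intact and extracting $(|x-x'|/t)^{\gamma}$ from smoothness to beat the John--Nirenberg logarithms. The paper instead splits $[g(\vec f)(x)]^2=\int_0^{r}+\int_{r}^{\infty}$ (with $r$ the radius of the test ball $\mathcal{B}$) and bounds $[g(\vec f)(x)]^2-\mathrm{ess\,inf}_{\mathcal{B}}[g(\vec f)]^2$: the small-$t$ piece $I_0$ uses the same local/tail decomposition and the $L^{2m}\times\cdots\times L^{2m}\to L^2$ bound, while for large $t$ the paper first proves the uniform pointwise bound $|\mathcal{G}_t(\vec f)(x)|\lesssim\prod_i\|f_i\|_{\mathrm{BMO}}$ (for $t>r$, $x\in\mathcal{B}$), which lets it factor the difference of squares before invoking smoothness. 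Your approach is the standard one for a BMO target and is slightly more economical if only BMO is desired; the paper's approach yields the sharper BLO conclusion at no extra cost and, for $S$, bypasses your symmetric-difference-of-cones geometry by translating the cone variable and comparing $|\mathcal{G}_t(\vec f)(x+z)|^2-|\mathcal{G}_t(\vec f)(y+z)|^2$ directly.
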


\begin{thm}[\cite{he}]\label{thm19}
Let $\lambda>4$. For any $f_1,f_2\in \mathrm{BMO}(\mathbb R^n)$, if $g^{\ast}_{\lambda}(f_1,f_2)$ is finite for a single point $x_0\in \mathbb R^n$, then $g^{\ast}_{\lambda}(f_1,f_2)$ is finite almost everywhere on $\mathbb R^n$, and there exists a positive constant $C>0$, independent of $f_1$ and $f_2$, such that
\begin{equation*}
\big\|g^{\ast}_{\lambda}(f_1,f_2)\big\|_{\mathrm{BMO}}\leq C\big\|f_1\big\|_{\mathrm{BMO}}\big\|f_2\big\|_{\mathrm{BMO}}.
\end{equation*}
\end{thm}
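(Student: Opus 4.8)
The plan is to run the standard ``local part plus global part'' argument for square functions on BMO --- as carried out by Meng--Yang \cite{meng} for $m=1$ --- while carefully tracking the Poisson-type weight $\bigl(\tfrac{t}{t+|x-z|}\bigr)^{\lambda n}$. Fix a ball $B=B(x_B,r_B)$ and put $\widetilde B=8B$; for $i=1,2$ decompose
\[
f_i=f_i^0+f_i^\infty+(f_i)_{\widetilde B},\qquad f_i^0=(f_i-(f_i)_{\widetilde B})\chi_{\widetilde B},\qquad f_i^\infty=(f_i-(f_i)_{\widetilde B})\chi_{\mathbb R^n\setminus\widetilde B}.
\]
Since $\delta>0$, the size condition together with the logarithmic growth of BMO functions makes the integral defining $\mathcal G_t(\vec f)(z)$ converge absolutely; and since $\int_{\mathbb R^n}\mathcal K(\dots,y_i,\dots)\,dy_i=0$ for each $i$, every term of $\mathcal G_t(\vec f)$ in which an entry is the constant $(f_i)_{\widetilde B}$ vanishes, whence by multilinearity $\mathcal G_t(\vec f)=\sum_{\sigma\in\{0,\infty\}^2}\mathcal G_t(f_1^{\sigma_1},f_2^{\sigma_2})$. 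Writing $g^*_\lambda(\vec f)(x)=\bigl\|\mathcal G_t(\vec f)(z)\bigr\|_{L^2(d\mu_x)}$ with $d\mu_x(z,t)=\bigl(\tfrac{t}{t+|x-z|}\bigr)^{\lambda n}t^{-n-1}\,dz\,dt$ and using the triangle inequality in $L^2(d\mu_x)$,
\[
\bigl|g^*_\lambda(\vec f)(x)-C_B\bigr|\le g^*_\lambda(f_1^0,f_2^0)(x)+g^*_\lambda(f_1^0,f_2^\infty)(x)+g^*_\lambda(f_1^\infty,f_2^0)(x)+\bigl|g^*_\lambda(f_1^\infty,f_2^\infty)(x)-C_B\bigr|
\]
with the choice $C_B:=g^*_\lambda(f_1^\infty,f_2^\infty)(x_B)$; it suffices to bound the $B$-average of each right-hand term by $C\|f_1\|_{\mathrm{BMO}}\|f_2\|_{\mathrm{BMO}}$, uniformly in $B$.

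The ``local--local'' term is handled by the a priori boundedness $g^*_\lambda:L^{q_1}\times L^{q_2}\to L^q$ (from the strong-type estimates of \cite{shi,xueqing,xuepeng}, choosing exponents with $q\ge1$), Hölder's inequality on $B$, and the John--Nirenberg inequality $\|f_i^0\|_{L^p(\widetilde B)}\lesssim|\widetilde B|^{1/p}\|f_i\|_{\mathrm{BMO}}$ together with $|\widetilde B|\approx|B|$. For the two mixed terms, the second entry is supported off $\widetilde B$, so for $x\in B$ one has $|z-y_2|\gtrsim r_B$ in the bulk; splitting the $z$-integral into the cone annuli $\{2^{k-1}t\le|x-z|<2^kt\}$, on which the $\lambda$-weight is $\approx2^{-k\lambda n}$, and the $y_2$-integral into the dyadic annuli $2^j\widetilde B\setminus2^{j-1}\widetilde B$, and using the size condition in the form $|\mathcal K_t(u_1,u_2)|\lesssim t^\delta(t+|u_1|)^{-n-\delta/2}(t+|u_2|)^{-n-\delta/2}$ together with $|(f_2)_{2^j\widetilde B}-(f_2)_{\widetilde B}|\lesssim j\|f_2\|_{\mathrm{BMO}}$, all resulting $j$- and $k$-series converge (here $\delta>0$ and $\lambda>1$ suffice); in particular these three terms are finite everywhere on $B$.

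The crux is the ``global--global'' oscillation term. Since $\mathcal G_t(f_1^\infty,f_2^\infty)(z)$ is the \emph{same} function for $x$ and for $x_B$, only the weight changing, the triangle inequality in $L^2(t^{-n-1}dz\,dt)$ gives, whenever $C_B<\infty$,
\[
\bigl|g^*_\lambda(f_1^\infty,f_2^\infty)(x)-C_B\bigr|\le\Bigl(\iint_{\mathbb R^{n+1}_+}\bigl|w_x^{1/2}-w_{x_B}^{1/2}\bigr|^2\bigl|\mathcal G_t(f_1^\infty,f_2^\infty)(z)\bigr|^2\frac{dz\,dt}{t^{n+1}}\Bigr)^{1/2},\qquad w_x:=\Bigl(\tfrac{t}{t+|x-z|}\Bigr)^{\lambda n}.
\]
I would combine three ingredients: (a) the mean value estimate $\bigl|w_x^{1/2}-w_{x_B}^{1/2}\bigr|\lesssim\tfrac{r_B}{t+|x_B-z|}\bigl(\tfrac{t}{t+|x_B-z|}\bigr)^{\lambda n/2}$ for $|z-x_B|\gtrsim r_B$ (on the remaining bounded region one uses $\bigl|w_x^{1/2}-w_{x_B}^{1/2}\bigr|\le1$ and the $t^\delta$-smallness of $\mathcal G_t$ from (b)); (b) the size estimate
\[
\bigl|\mathcal G_t(f_1^\infty,f_2^\infty)(z)\bigr|\lesssim\|f_1\|_{\mathrm{BMO}}\|f_2\|_{\mathrm{BMO}}\cdot\min\bigl\{1,(t/r_B)^{\delta}\bigr\}\cdot\Bigl[\log\Bigl(2+\tfrac{|z-x_B|+t}{r_B}\Bigr)\Bigr]^2,
\]
obtained by subtracting BMO averages in each slot, telescoping over dyadic annuli around $\widetilde B$, and using the size condition on $\mathcal K$; and (c) a scale-invariant computation of the resulting space--time integral. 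Step (c) is precisely where $\lambda>4$ is used: the $\lambda$-weight must overcome the polynomial-in-$\log$ factor generated by the \emph{two} BMO inputs through the bilinear kernel, and $\lambda n>2\cdot2n$ is the borderline making the integral finite. Equivalently, decomposing $\mathbb R^{n+1}_+$ into the cone annuli yields $g^*_\lambda(\vec f)(x)^2\approx\sum_{k\ge0}2^{-k\lambda n}S_{2^k}(\vec f)(x)^2$, where $S_a$ is the bilinear Lusin area integral of aperture $a$, and the matter reduces to a BMO/BLO estimate for $S_{2^k}(\vec f)^2$ with operator norm $O(2^{4kn})$ --- an aperture-tracked version of Theorem \ref{thm18} --- so that $\sum_k2^{-k\lambda n}2^{4kn}<\infty$ iff $\lambda>4$. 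Establishing (b) uniformly on $\mathbb R^{n+1}_+$, and this aperture dependence, is where I expect the main technical difficulty.

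For the passage from one point to almost everywhere I would argue as in \cite{meng,he}: the two mixed terms are finite everywhere on any ball $B$, $g^*_\lambda(f_1^0,f_2^0)$ is finite a.e.\ on $B$, and the conditional estimate above shows $g^*_\lambda(f_1^\infty,f_2^\infty)$ is either finite everywhere on $B$ or infinite everywhere on $B$; combining these, $g^*_\lambda(f_1,f_2)$ is either finite a.e.\ on $B$ or identically $+\infty$ on $B$, and the standard argument then upgrades this to the global dichotomy ``identically $+\infty$ or finite a.e.\ on $\mathbb R^n$,'' which the hypothesis $g^*_\lambda(f_1,f_2)(x_0)<\infty$ forces into the second alternative. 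Finally, with a.e.\ finiteness in hand, $C_B$ is a finite constant for every $B$, and the four estimates combine to give $\|g^*_\lambda(f_1,f_2)\|_{\mathrm{BMO}}\le C\|f_1\|_{\mathrm{BMO}}\|f_2\|_{\mathrm{BMO}}$.
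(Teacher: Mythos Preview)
This theorem is not proved in the present paper: it is quoted verbatim from \cite{he} as background (note the attribution ``\cite{he}'' in the theorem header), and the paper gives no argument for it. There is therefore no ``paper's own proof'' to compare your proposal against.

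That said, a brief assessment. Your outline follows the standard BMO scheme (local/global splitting, vanishing moments killing constants, $L^p$ boundedness for the local--local piece, size/annulus estimates for mixed pieces, and an oscillation estimate for the global--global piece), which is indeed how \cite{he} proceeds and is also the template this paper uses for its own BLO results (Theorems~\ref{mainthm1}--\ref{mainthm2}). The main soft spot is your aperture heuristic: you assert that the aperture-$2^k$ bilinear area integral has BMO/BLO operator norm $O(2^{4kn})$, so that $\sum_k 2^{-k\lambda n}2^{4kn}<\infty$ forces $\lambda>4$, but you do not justify the exponent $4kn$; getting this sharp power is exactly the delicate point, and a sloppy bound there would inflate the required range of $\lambda$. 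Indeed, this paper's own $g^\ast_\lambda$ result (Theorem~\ref{mainthm3}) only achieves the BLO conclusion under the much stronger hypothesis $\lambda>3m+(2\delta+2\gamma)/n$ (i.e.\ $\lambda>6+(2\delta+2\gamma)/n$ when $m=2$), which illustrates that the naive route does not recover $\lambda>4$. So while your plan is the right shape, the step ``operator norm $O(2^{4kn})$'' is the one that actually needs a proof, and it is not obvious.
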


Inspired by the previous works (Theorem \ref{thm12} in the linear case, and Theorems \ref{thm18} and \ref{thm19} in the bilinear case), it is natural to ask the question whether the conclusion in Theorem \ref{thm12} still holds in the multilinear setting. In this paper, we will give a positive answer to this question.

Let $\mathcal{T}_{g}(\vec{f})$ denote the multilinear Littlewood--Paley functions of $\vec{f}$ on $\mathbb R^n$, including the multilinear $g$-function $g(\vec{f})$, multilinear Lusin's area integral $S(\vec{f})$ and multilinear Littlewood--Paley $g^{\ast}_{\lambda}$-function $g^{\ast}_{\lambda}(\vec{f})$. It is proved that if $\vec{f}=(f_1,f_2,\dots,f_m)\in[\mathrm{BMO}(\mathbb R^n)]^{m}$, then $\mathcal{T}_{g}(\vec{f})$ is either infinite everywhere or finite almost everywhere, and in the latter case, $\big[\mathcal{T}_{g}(\vec{f})\big]^2$ is bounded from $\mathrm{BMO}(\mathbb R^n)\times\cdots\times \mathrm{BMO}(\mathbb R^n)$ into $\mathrm{BLO}(\mathbb R^n)$, which is a proper subspace of $\mathrm{BMO}(\mathbb R^n)$. Moreover, we also obtain that when $\vec{f}=(f_1,f_2,\dots,f_m)\in[L^{\infty}(\mathbb R^n)]^m$, then $\mathcal{T}_{g}(\vec{f})$ is finite everywhere, and $\big[\mathcal{T}_{g}(\vec{f})\big]^2$ is bounded from $L^{\infty}(\mathbb R^n)\times\cdots\times L^{\infty}(\mathbb R^n)$ into $\mathrm{BLO}(\mathbb R^n)$, which is an extension of Leckband's result in the multilinear setting.

\section{Definitions and notations}

\subsection{Lebesgue spaces, $\mathrm{BMO}$ and $\mathrm{BLO}$ spaces}
\label{sec12}
Recall that, for any given $p\in(0,\infty)$, the Lebesgue space $L^p(\mathbb R^n)$ is defined as the set of all integrable functions $f$ on $\mathbb R^n$ such that
\begin{equation*}
\|f\|_{L^p}:=\bigg(\int_{\mathbb R^n}|f(x)|^p\,dx\bigg)^{1/p}<+\infty,
\end{equation*}
and the weak Lebesgue space $L^{p,\infty}(\mathbb R^n)$ is defined to be the set of all Lebesgue measurable functions $f$ on $\mathbb R^n$ such that
\begin{equation*}
\|f\|_{L^{p,\infty}}:=\sup_{\lambda>0}\lambda\cdot m\big(\big\{x\in\mathbb R^n:|f(x)|>\lambda\big\}\big)^{1/p}<+\infty.
\end{equation*}
Let $L^{\infty}(\mathbb R^n)$ denote the Banach space of all essentially bounded measurable functions $f$ on $\mathbb R^n$.
The norm of $f\in L^{\infty}(\mathbb R^n)$ is given by
\begin{equation*}
\|f\|_{L^\infty}:=\underset{x\in\mathbb R^n}{\mbox{ess\,sup}}\,|f(x)|<+\infty.
\end{equation*}
For any $x_0\in\mathbb R^n$ and $r>0$, let $B(x_0,r):=\{x\in\mathbb R^n:|x-x_0|<r\}$ denote the open ball centered at $x_0$ with the radius $r$, and $B(x_0,r)^{\complement}$ denote its complement. Given $B=B(x_0,r)$ and $t>0$, we will write $tB$ for the $t$-dilate ball, which is the ball with the same center $x_0$ and with radius $tr$. For a measurable set $E\subset\mathbb R^n$, we use the notation $m(E)$ for the $n$-dimensional Lebesgue measure of the set $E$, and we use the notation $\chi_{E}$ to denote the characteristic function of the set $E$: $\chi_E(x)=1$ if $x\in E$ and $0$ if $x\notin E$.

A locally integrable function $f$ on $\mathbb R^n$ is said to be in $\mathrm{BMO}(\mathbb R^n)$, the space of bounded mean oscillation(see \cite{john}), if
\begin{equation*}
\|f\|_{\mathrm{BMO}}:=\sup_{\mathcal{B}\subset\mathbb R^n}\frac{1}{m(\mathcal{B})}
\int_{\mathcal{B}}|f(x)-f_{\mathcal{B}}|\,dx<+\infty,
\end{equation*}
where $f_{\mathcal{B}}$ denotes the mean value of $f$ over $\mathcal{B}$, i.e.,
\begin{equation*}
f_{\mathcal{B}}:=\frac{1}{m(\mathcal{B})}\int_{\mathcal{B}} f(y)\,dy
\end{equation*}
and the supremum is taken over all balls $\mathcal{B}$ in $\mathbb R^n$. Modulo constants, the space $\mathrm{BMO}(\mathbb R^n)$ is a Banach function space with respect to the norm $\|\cdot\|_{\mathrm{BMO}}$. The space of BMO functions was first introduced by John and Nirenberg in \cite{john}.

A locally integrable function $f$ on $\mathbb R^n$ is said to be in $\mathrm{BLO}(\mathbb R^n)$, the space of bounded lower oscillation(see \cite{coifman}), if there exists a constant $C>0$ such that for any ball $\mathcal{B}\subset\mathbb R^n$,
\begin{equation*}
\frac{1}{m(\mathcal{B})}\int_{\mathcal{B}}\Big[f(x)-\underset{y\in\mathcal{B}}{\mathrm{ess\,inf}}\,f(y)\Big]\,dx\leq C.
\end{equation*}
The smallest constant $C$ as above is defined to be the BLO-constant of $f$, and is denoted by $\|f\|_{\mathrm{BLO}}$. The space of BLO functions was first introduced by Coifman and Rochberg in \cite{coifman}.

\subsection{Inclusion relations between $L^{\infty}$, $\mathrm{BLO}$ and $\mathrm{BMO}$}
It can be shown that
\begin{equation*}
L^{\infty}(\mathbb R^n)\subset\mathrm{BLO}(\mathbb R^n)\subset\mathrm{BMO}(\mathbb R^n).
\end{equation*}
Moreover, the above inclusion relations are both strict, see \cite{hu,meng,ou} for some examples. It is easy to verify that
\begin{equation}\label{relation11}
\|f\|_{\mathrm{BLO}}\leq 2\|f\|_{L^\infty},
\end{equation}
and
\begin{equation}\label{bmoblo}
\|f\|_{\mathrm{BMO}}\leq 2\|f\|_{\mathrm{BLO}}.
\end{equation}
In fact, suppose that $f\in L^{\infty}(\mathbb R^n)$. For any ball $\mathcal{B}\subset\mathbb R^n$, it is easy to see that
\begin{equation*}
\begin{split}
&\frac{1}{m(\mathcal{B})}\int_{\mathcal{B}}\Big[f(x)-\underset{y\in\mathcal{B}}{\mathrm{ess\,inf}}\,f(y)\Big]\,dx\\
&\leq \frac{1}{m(\mathcal{B})}\int_{\mathcal{B}}\big[2\|f\|_{L^\infty}\big]dx=2\|f\|_{L^\infty}.
\end{split}
\end{equation*}
This proves \eqref{relation11}. On the other hand, let $f$ belong to $\mathrm{BLO}(\mathbb R^n)$. Then for any ball $\mathcal{B}\subset\mathbb R^n$,
\begin{equation*}
\begin{split}
&\frac{1}{m(\mathcal{B})}\int_{\mathcal{B}}\big|f(x)-f_{\mathcal{B}}\big|\,dx\\
&=\frac{1}{m(\mathcal{B})}\int_{\mathcal{B}}\Big|f(x)
-\underset{y\in\mathcal{B}}{\mathrm{ess\,inf}}\,f(y)+\underset{y\in\mathcal{B}}{\mathrm{ess\,inf}}\,f(y)-f_{\mathcal{B}}\Big|\,dx\\
&\leq\frac{1}{m(\mathcal{B})}\int_{\mathcal{B}}\Big[f(x)-\underset{y\in\mathcal{B}}{\mathrm{ess\,inf}}\,f(y)\Big]\,dx
+\Big|\underset{y\in\mathcal{B}}{\mathrm{ess\,inf}}\,f(y)-f_{\mathcal{B}}\Big|\\
&\leq\frac{2}{m(\mathcal{B})}\int_{\mathcal{B}}\Big[f(x)-\underset{y\in\mathcal{B}}{\mathrm{ess\,inf}}\,f(y)\Big]\,dx
\leq 2\|f\|_{\mathrm{BLO}},
\end{split}
\end{equation*}
as desired. This proves \eqref{bmoblo}.

\begin{rem}
It should be pointed out that $\|\cdot\|_{\mathrm{BLO}}$ is not a norm and $\mathrm{BLO}(\mathbb R^n)$ is not a linear space (it is a proper subspace of $\mathrm{BMO}(\mathbb R^n)$).
\end{rem}

Throughout this paper, we use $C$ to denote a positive constant, which is independent of main parameters and may be different at each occurrence. By $\mathbf{X}\lesssim\mathbf{Y}$, we mean that there exists a positive constant $C>0$ such that $\mathbf{X}\leq C\mathbf{Y}$. If $\mathbf{X}\lesssim\mathbf{Y}$ and $\mathbf{Y}\lesssim\mathbf{X}$, then we write $\mathbf{X}\approx\mathbf{Y}$ and say that $\mathbf{X}$ and $\mathbf{Y}$ are equivalent.

\section{Main results}
The main purpose of this paper is to establish the existence and boundedness of multilinear Littlewood--Paley operators with convolution type kernels on products of BMO spaces, including the multilinear $g$-function, multilinear Lusin's area integral and multilinear Littlewood--Paley $g^{\ast}_{\lambda}$-function. We will prove that if the above operators are finite for one point, then they are finite almost everywhere. Moreover, these multilinear operators are bounded from $\mathrm{BMO}(\mathbb R^n)\times\cdots\times \mathrm{BMO}(\mathbb R^n)$ into $\mathrm{BLO}(\mathbb R^n)$. These results can be viewed as an improvement of Theorems \ref{thm18} and \ref{thm19} in the bilinear case. To this aim, we start by giving the following results.
\begin{thm}[\cite{xueqing,xuepeng}]\label{gs1}
Let $2\leq m\in \mathbb{N}$, $1\leq p_1,p_2,\dots,p_m<\infty$ and $0<p<\infty$ with
\begin{equation*}
\frac{\,1\,}{p}=\frac{1}{p_1}+\frac{1}{p_2}+\cdots+\frac{1}{p_m}.
\end{equation*}
Then the following results hold:

$(i)$ If each $p_i>1$, $i=1,2,\dots,m$, then there is a constant $C>0$ independent of $\vec{f}$ such that
\begin{equation*}
\big\|g(\vec{f})\big\|_{L^p}\leq C\prod_{i=1}^m\|f_i\|_{L^{p_i}},
\quad \big\|S(\vec{f})\big\|_{L^p}\leq C\prod_{i=1}^m\|f_i\|_{L^{p_i}},
\end{equation*}
hold for all $\vec{f}=(f_1,f_2,\dots,f_m)\in L^{p_1}(\mathbb R^n)\times L^{p_2}(\mathbb R^n)\times \cdots\times L^{p_m}(\mathbb R^n)$.

$(ii)$ If at least one $p_i=1$, then there is a constant $C>0$ independent of $\vec{f}$ such that
\begin{equation*}
\big\|g(\vec{f})\big\|_{L^{p,\infty}}\leq C\prod_{i=1}^m\|f_i\|_{L^{p_i}},
\quad \big\|S(\vec{f})\big\|_{L^{p,\infty}}\leq C\prod_{i=1}^m\|f_i\|_{L^{p_i}},
\end{equation*}
hold for all $\vec{f}=(f_1,f_2,\dots,f_m)\in L^{p_1}(\mathbb R^n)\times L^{p_2}(\mathbb R^n)\times \cdots\times L^{p_m}(\mathbb R^n)$.
In particular, the multilinear operators $g$ and $S$ are bounded from $L^{1}(\mathbb R^n)\times L^{1}(\mathbb R^n)\times \cdots\times L^{1}(\mathbb R^n)$ into $L^{1/m,\infty}(\mathbb R^n)$.
\end{thm}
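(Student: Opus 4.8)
The plan is to realize $g(\vec f)$ and $S(\vec f)$ as the pointwise norms of suitable Hilbert-space-valued $m$-linear singular integral operators, to verify that the induced vector-valued kernels satisfy the $m$-linear Calder\'{o}n--Zygmund size and smoothness estimates, and then to invoke the multilinear Calder\'{o}n--Zygmund theory of Grafakos--Torres (which carries over verbatim to Hilbert-space-valued kernels), using the standing hypothesis of the paper as the one initial strong-type bound that the machinery requires.

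For the $g$-function, set $\mathcal H:=L^2((0,\infty),dt/t)$ and regard $\vec{\mathcal G}(\vec f)(x):=\{\mathcal G_t(\vec f)(x)\}_{t>0}$ as an $\mathcal H$-valued $m$-linear operator, so that $g(\vec f)(x)=\|\vec{\mathcal G}(\vec f)(x)\|_{\mathcal H}$; its associated kernel is the $\mathcal H$-valued function $(u_1,\dots,u_m)\mapsto\{\mathcal K_t(u_1,\dots,u_m)\}_{t>0}$. Writing $s:=\sum_{j=1}^m|u_j|$, the size condition in Definition \ref{defin12} rescales to $|\mathcal K_t(u_1,\dots,u_m)|\le C\,t^{\delta}(t+s)^{-mn-\delta}$, and integrating the square of this against $dt/t$ (via the substitution $t=su$) yields the $m$-linear size bound $\|\{\mathcal K_t\}\|_{\mathcal H}\le C\,s^{-mn}$. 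Similarly, the smoothness condition rescales to the pointwise estimate $|\mathcal K_t(\dots,u_i,\dots)-\mathcal K_t(\dots,u_i',\dots)|\le C\,t^{\delta}|u_i-u_i'|^{\gamma}(t+s)^{-mn-\delta-\gamma}$ whenever $2|u_i-u_i'|\le\max_j|u_j|$, and the same integration produces the $m$-linear H\"older bound $\le C\,|u_i-u_i'|^{\gamma}s^{-mn-\gamma}$ on the $\mathcal H$-norm of the difference. Thus $\vec{\mathcal G}$ is an $\mathcal H$-valued $m$-linear Calder\'{o}n--Zygmund operator; combined with the standing assumption that $\mathcal T_g$ is bounded on some product $L^{q_1}\times\cdots\times L^{q_m}\to L^q$ with $1/q=\sum_i 1/q_i$, the multilinear Calder\'{o}n--Zygmund theory then delivers all strong bounds with $p_i>1$ (part $(i)$) and all weak-type bounds $L^{p_1}\times\cdots\times L^{p_m}\to L^{p,\infty}$ when at least one $p_i=1$ (part $(ii)$), in particular the endpoint $L^1\times\cdots\times L^1\to L^{1/m,\infty}$.

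For the area integral $S(\vec f)$ one runs the same scheme with the larger Hilbert space $\mathcal H':=L^2(\{(w,t):|w|<t\},\,dw\,dt/t^{n+1})$, writing $S(\vec f)(x)=\|\{\mathcal G_t(\vec f)(x+w)\chi_{\{|w|<t\}}\}_{(w,t)}\|_{\mathcal H'}$, so that the associated $\mathcal H'$-valued kernel is $((w,t);u_1,\dots,u_m)\mapsto\mathcal K_t(u_1+w,\dots,u_m+w)\chi_{\{|w|<t\}}$. The size estimate for this kernel is again routine from the size condition on $\mathcal K$ (one integrates $t^{2\delta}(t+|w|+s)^{-2mn-2\delta}$ over the cone), and once the kernel estimates are in place the Grafakos--Torres machinery concludes exactly as for $g$.

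The step I expect to be the main obstacle is the smoothness estimate for this area-integral kernel: the cutoff $\chi_{\{|w|<t\}}$ is not smooth, so perturbing $u_i\mapsto u_i'$ also translates the cone over which one integrates. One must split the $\mathcal H'$-norm of the difference into (a) the contribution of the region where the original and translated cones overlap, controlled by the pointwise H\"older bound on $\mathcal K_t$ above, and (b) the contribution of the symmetric difference of the two cones, controlled by the pointwise size bound on $\mathcal K_t$ together with a quantitative estimate for the measure of that symmetric difference. This second piece is the classical Fefferman--Stein-type argument, and some care is needed to recover the correct decay $s^{-mn-\gamma'}$ with a possibly smaller H\"older exponent $\gamma'>0$ (which is harmless, since multilinear Calder\'{o}n--Zygmund theory only needs \emph{some} positive exponent). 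With these kernel estimates established, parts $(i)$ and $(ii)$ for $S$ follow verbatim from the argument used for $g$.
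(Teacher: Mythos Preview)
The paper does not prove this theorem; it is quoted from the references \cite{xueqing,xuepeng} and used as a black box in the proofs of Theorems~\ref{mainthm1} and~\ref{mainthm2}. Your plan---realize $g$ and $S$ as the $\mathcal H$- (resp.\ $\mathcal H'$-) norms of Hilbert-space-valued $m$-linear Calder\'on--Zygmund operators and invoke Grafakos--Torres---is exactly the approach taken in those cited papers, so in that sense you are on the standard track.

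One correction: the ``main obstacle'' you flag for $S$ is not an obstacle at all. In your own parametrization the $\mathcal H'$-valued kernel is
\[
(w,t;\,u_1,\dots,u_m)\ \longmapsto\ \mathcal K_t(u_1+w,\dots,u_m+w)\,\chi_{\{|w|<t\}},
\]
and the cutoff $\chi_{\{|w|<t\}}$ depends only on the Hilbert-space variable $(w,t)$, not on any $u_i$ (nor on $x$, since $u_i=x-y_i$ absorbs the translation). Perturbing $u_i\mapsto u_i'$ therefore does \emph{not} translate the cone; the difference is simply $[\mathcal K_t(\dots,u_i+w,\dots)-\mathcal K_t(\dots,u_i'+w,\dots)]\chi_{\{|w|<t\}}$, and the $\mathcal H'$-norm of this is controlled directly by the pointwise H\"older bound on $\mathcal K_t$, integrated over the cone, exactly as you did for the size estimate. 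No splitting into overlap plus symmetric difference is needed, and no loss of H\"older exponent occurs. The cutoff issue you describe arises only if one parametrizes by $z$ rather than $w=z-x$; your choice of $w$ already sidesteps it. With this simplification your outline is complete and correct.
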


\begin{thm}[\cite{shi,xueqing}]\label{gs1ambda}
Suppose that $\lambda>2m$ and $0<\gamma<\min\big\{\delta,{n(\lambda-2m)}/2\big\}$.
Let $2\leq m\in \mathbb{N}$, $1\leq p_1,p_2,\dots,p_m<\infty$ and $0<p<\infty$ with
\begin{equation*}
\frac{\,1\,}{p}=\frac{1}{p_1}+\frac{1}{p_2}+\cdots+\frac{1}{p_m}.
\end{equation*}
Then the following results hold:

$(i)$ If each $p_i>1$, $i=1,2,\dots,m$, then there is a constant $C>0$ independent of $\vec{f}$ such that
\begin{equation*}
\big\|g^{\ast}_{\lambda}(\vec{f})\big\|_{L^p}\leq C\prod_{i=1}^m\|f_i\|_{L^{p_i}}.
\end{equation*}

$(ii)$ If  at least one $p_i$ equals one, then there is a constant $C>0$ independent of $\vec{f}$ such that
\begin{equation*}
\big\|g^{\ast}_{\lambda}(\vec{f})\big\|_{L^{p,\infty}}\leq C\prod_{i=1}^m\|f_i\|_{L^{p_i}}.
\end{equation*}
In particular, the multilinear operator $g^{\ast}_{\lambda}$ is bounded from $L^{1}(\mathbb R^n)\times L^{1}(\mathbb R^n)\times \cdots\times L^{1}(\mathbb R^n)$ into $L^{1/m,\infty}(\mathbb R^n)$ when $\lambda>2m$ and $0<\gamma<\min\big\{\delta,{n(\lambda-2m)}/2\big\}$.
\end{thm}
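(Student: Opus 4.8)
The plan is to reduce Theorem \ref{gs1ambda} to the already available $L^p$ and weak-type estimates for the multilinear area integral $S$ (Theorem \ref{gs1}) by a dyadic decomposition of the aperture. Observe first that $\mathcal{G}_t(\vec f)(z)$ carries no dependence on $x$, so in $g^\ast_\lambda(\vec f)(x)$ the only occurrence of $x$ is through the factor $(t/(t+|x-z|))^{\lambda n}$. Splitting $\mathbb{R}^{n+1}_+=\bigcup_{k\ge0}\mathcal{R}_k$ with $\mathcal{R}_0=\Gamma(x)$ and $\mathcal{R}_k=\{(z,t):2^{k-1}t\le|x-z|<2^kt\}$ for $k\ge1$, one has $(t/(t+|x-z|))^{\lambda n}\le 2^{-(k-1)\lambda n}$ on $\mathcal{R}_k$ while $\mathcal{R}_k\subset\{(z,t):|x-z|<2^kt\}$. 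Writing $S^{(a)}(\vec f)(x)^2:=\iint_{|x-z|<at}|\mathcal{G}_t(\vec f)(z)|^2\,t^{-n-1}\,dz\,dt$ for the area integral of aperture $a\ge1$, this gives the pointwise domination
\[
g^\ast_\lambda(\vec f)(x)^2\;\lesssim\;\sum_{k\ge0}2^{-k\lambda n}\,S^{(2^k)}(\vec f)(x)^2 .
\]

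Next I would invoke the change-of-aperture inequality for square functions: since $S^{(a)}(\vec f)$ is built from the fixed function $F(z,t)=|\mathcal{G}_t(\vec f)(z)|$, the known bound $\|S^{(a)}(\vec f)\|_{L^p}\le C\,a^{\kappa}\|S^{(1)}(\vec f)\|_{L^p}$ with $\kappa=n\max\{1/2,1/p\}$ holds for every $0<p<\infty$. Combining this with Theorem \ref{gs1}(i), which gives $\|S^{(1)}(\vec f)\|_{L^p}\lesssim\prod_{i=1}^m\|f_i\|_{L^{p_i}}$, and taking the $L^{p/2}$-(quasi-)norm of the displayed inequality --- using the triangle inequality when $p\ge2$ and the sub-additivity $\|\sum_k h_k\|_{L^{p/2}}^{p/2}\le\sum_k\|h_k\|_{L^{p/2}}^{p/2}$ when $p<2$ --- reduces $(i)$ to the convergence of $\sum_{k\ge0}2^{k(2\kappa-\lambda n)}$, i.e. to $\lambda>\max\{1,2/p\}$. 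Since $1/p=\sum_{i=1}^m1/p_i\le m$ forces $2/p\le 2m$, the hypothesis $\lambda>2m$ is exactly what makes this hold for every admissible exponent tuple, and $(i)$ follows.

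For the weak endpoint $(ii)$ the aperture splitting is still the right first move, but a weak-type change-of-aperture bound is awkward, so I would instead run a multilinear Calder\'on--Zygmund decomposition directly on $g^\ast_\lambda$. The choice $p_1=\cdots=p_m=2$ in $(i)$ already yields the boundedness $L^2\times\cdots\times L^2\to L^{2/m}$ needed to control the good part after decomposing those $f_i$ with $p_i=1$ at a suitable height. For the bad part one exploits the vanishing, size and smoothness conditions on $\mathcal{K}$ from Definition \ref{defin12}: the cancellation of $\mathcal{K}$ in each variable together with its H\"older continuity of order $\gamma$ produces a gain $|y_i-y_i'|^{\gamma}$, and summing the resulting contributions of the bad cubes against the $g^\ast_\lambda$ weight over all scales $t>0$ and over the aperture index $k$ leaves two competing geometric series whose joint convergence is guaranteed exactly by $0<\gamma<n(\lambda-2m)/2$ (with $\gamma<\delta$ keeping the kernel estimates mutually consistent). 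Specializing to $p_1=\cdots=p_m=1$ gives the stated $[L^1]^m\to L^{1/m,\infty}$ bound.

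The routine part is $(i)$: once the aperture decomposition and the change-of-aperture inequality are in place it is essentially bookkeeping, and the threshold $\lambda>2m$ drops out automatically. The main obstacle is the weak-type estimate in $(ii)$: executing the multilinear Calder\'on--Zygmund decomposition for $g^\ast_\lambda$ forces one to juggle simultaneously the $t$-integration of the square function, the sum over the bad cubes at all generations, and the slowly decaying weight $(t/(t+|x-z|))^{\lambda n}$, and it is precisely at this step that the balance among $\lambda$, $\gamma$, $\delta$ and the number of factors $m$ must be arranged so that every series converges.
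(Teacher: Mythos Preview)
The paper does not prove Theorem~\ref{gs1ambda}: it is quoted from \cite{shi,xueqing} and used as a black box (see the discussion preceding it and its later invocation for Theorem~\ref{mainthm3}). There is therefore no ``paper's own proof'' to compare your attempt against.

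That said, your outline for part $(i)$ is the standard and correct reduction: the dyadic aperture decomposition plus the change-of-aperture inequality for $S^{(a)}$ (which applies to \emph{any} nonnegative $F(z,t)$, hence in particular to $|\mathcal{G}_t(\vec f)(z)|$) reduces everything to Theorem~\ref{gs1}(i), and the geometric series converges exactly when $\lambda>\max\{1,2/p\}$, which is covered by $\lambda>2m$. Note that this part of your argument does not touch the hypothesis $0<\gamma<\min\{\delta,n(\lambda-2m)/2\}$; that constraint is only needed at the weak endpoint.

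For $(ii)$ your plan is in the right spirit but remains a sketch rather than a proof. The multilinear Calder\'on--Zygmund decomposition produces not just ``good'' and ``bad'' but all $2^m$ mixed configurations, and in each one the cancellation/smoothness of $\mathcal{K}$ must be played off against the slowly decaying weight $(t/(t+|x-z|))^{\lambda n}$ across scales in $t$, bad cubes, and aperture shells simultaneously. You correctly identify that the constraint $\gamma<n(\lambda-2m)/2$ is what makes the resulting double series converge, but actually verifying that every mixed term closes --- and in particular handling the terms where only some of the $f_i$ carry bad parts --- is where the substantive work in \cite{shi,xueqing} lies, and your proposal does not yet supply it.
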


\begin{rem}
Note that if $m=1$, then the above theorem is just the classical result of Stein \cite{emstein2,stein} when it is associated with the Poisson kernel, and is the result of Xue and Ding \cite{xueding} when it is associated with more general kernel satisfying the conditions \eqref{psi1}, \eqref{psi2} and \eqref{psi3}. The weak-type $(1,1)$ estimate in \cite{emstein2,stein} is essentially the best possible in the sense that $\lambda>2$. It seems that the range of $\lambda>2m$ is the best possible adapted to the multilinear($m$-linear) theory.
\end{rem}

Let $2\leq m\in \mathbb{N}$. When $f_i\in\mathrm{BMO}(\mathbb R^n)$ for $i=1,2,\dots,m$, we simply write
\begin{equation*}
\vec{f}:=(f_1,f_2,\dots,f_m)\in [\mathrm{BMO}(\mathbb R^n)]^{m}.
\end{equation*}

The main results of this paper are stated as follows.

\begin{thm}\label{mainthm1}
For any $\vec{f}=(f_1,f_2,\dots,f_m)\in [\mathrm{BMO}(\mathbb R^n)]^{m}$ and $2\leq m\in \mathbb{N}$, then $g(\vec{f})$ is either infinite everywhere or finite almost everywhere, and in the latter case, there exists a positive constant $C$, independent of $\vec{f}$, such that
\begin{equation*}
\big\|\big[g(\vec{f})\big]^2\big\|_{\mathrm{BLO}}\leq C\prod_{i=1}^m\big\|f_i\big\|^2_{\mathrm{BMO}}.
\end{equation*}
\end{thm}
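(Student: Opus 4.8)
The plan is to fix an arbitrary ball $B=B(x_0,r)$ and split the defining $t$-integral at the scale $t=r$, writing
\[
\big[g(\vec f)(x)\big]^2=A(x)+D(x),\qquad A(x):=\int_0^{r}\big|\mathcal G_t(\vec f)(x)\big|^2\frac{dt}{t},\quad D(x):=\int_{r}^{\infty}\big|\mathcal G_t(\vec f)(x)\big|^2\frac{dt}{t}.
\]
The feature I want to exploit is that $A$ is small in $L^1$-average over $B$ while $D$ is almost constant on $B$; since $A\ge 0$, that almost-constant (and possibly infinite) part of $D$ is absorbed by $\operatorname*{ess\,inf}_B[g(\vec f)]^2$, which is exactly the cancellation a $\mathrm{BLO}$ bound asks for, and the same splitting will also yield the zero-one law for finiteness. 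Throughout I write $\Pi:=\prod_{i=1}^m\|f_i\|_{\mathrm{BMO}}$, and I use repeatedly that the vanishing condition on $\mathcal K$ together with the multilinearity of $\mathcal G_t$ lets me subtract from each $f_i$ any constant $c_i$ — chosen adapted to the scale in play — without changing $\mathcal G_t(\vec f)$.

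Everything reduces to two pointwise estimates on $\mathcal G_t(\vec f)$. The first, which I regard as the crux, is a scale-uniform bound $|\mathcal G_t(\vec f)(x)|\lesssim\Pi$ valid for \emph{every} $t>0$ and every $x$: taking $c_i=(f_i)_{B(x,t)}$, applying the size condition, and splitting each $y_i$-integral into the dyadic annuli $B(x,2^kt)\setminus B(x,2^{k-1}t)$, the annulus with multi-index $\vec k$ and $K=\max_i k_i$ contributes $\lesssim\Pi\,2^{-K\delta}\prod_i(k_i+1)$ (the factors $k_i+1$ coming from $|(f_i)_{B(x,2^{k_i}t)}-(f_i)_{B(x,t)}|\lesssim k_i\|f_i\|_{\mathrm{BMO}}$), and $\sum_{\vec k}2^{-K\delta}\prod_i(k_i+1)<\infty$. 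The second is a global-scale oscillation bound: for $t>r$ and $x\in B$, $|\mathcal G_t(\vec f)(x)-\mathcal G_t(\vec f)(x_0)|\lesssim (r/t)^{\gamma'}\Pi$ for some $\gamma'>0$. For this I take common constants $c_i=(f_i)_{B(x_0,t)}$, telescope $\mathcal K_t(x-\vec y)-\mathcal K_t(x_0-\vec y)$ over the $m$ slots and apply the smoothness condition slotwise — legitimate on the region where some $y_j$ is far enough from $B$ that $\max_j|x_0-y_j|\ge 2|x-x_0|$; on the complementary (all-near) region I instead bound the two kernels separately by the size condition, which for $t>r$ costs only $(r/t)^{mn}(\log(e+t/r))^m$ — and then rerun the same dyadic bookkeeping, now carrying the extra decay $(r/t)^{\gamma}$ supplied by the smoothness exponent.

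Granting these, the assembly is short. For $D$, combining the two estimates gives $\int_r^\infty\big|\,|\mathcal G_t(\vec f)(x)|^2-|\mathcal G_t(\vec f)(x_0)|^2\,\big|\,\tfrac{dt}{t}\le\int_r^\infty|\mathcal G_t(\vec f)(x)-\mathcal G_t(\vec f)(x_0)|\,(|\mathcal G_t(\vec f)(x)|+|\mathcal G_t(\vec f)(x_0)|)\,\tfrac{dt}{t}\lesssim\Pi^2$; in the alternative where $g(\vec f)(x_0)<\infty$ this makes $D(x)$ finite for every $x\in B$ (since $D(x_0)\le[g(\vec f)(x_0)]^2$) and $|D(x)-D(x_0)|\lesssim\Pi^2$ there. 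For $A$, I decompose $f_i=(f_i)_{2B}+f_i^0+f_i^\infty$ with $f_i^0=(f_i-(f_i)_{2B})\chi_{2B}$, so that $\mathcal G_t(\vec f)=\sum_{\sigma\in\{0,\infty\}^m}\mathcal G_t(f_1^{\sigma_1},\dots,f_m^{\sigma_m})$; the all-local term is handled by Theorem~\ref{gs1}(i) (with $p_i=2m$, $p=2$) and the John--Nirenberg inequality, giving $\int_0^r\!\!\int_B|\mathcal G_t(f_1^0,\dots,f_m^0)|^2\,dx\,\tfrac{dt}{t}\le\|g(f_1^0,\dots,f_m^0)\|_{L^2}^2\lesssim\prod_i\|f_i^0\|_{L^{2m}}^2\lesssim|B|\,\Pi^2$, while any other term has a slot $i_0$ with $y_{i_0}\notin 2B$, hence $|x-y_{i_0}|\gtrsim r>t$, and splitting the exponent $\delta$ in the size condition extracts a factor $(t/r)^{\delta/2}$, leaving a product of $m$ convergent approximate-identity-against-$\mathrm{BMO}$ integrals, so such a term is $\lesssim(t/r)^{\delta/4}\Pi$ uniformly in $x\in B$ and hence square-integrable in $\tfrac{dt}{t}$ over $(0,r)$; thus $\tfrac1{|B|}\int_B A\lesssim\Pi^2$. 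Finally $\operatorname*{ess\,inf}_B[g(\vec f)]^2\ge\operatorname*{ess\,inf}_B D\ge D(x_0)-C\Pi^2$ while $\tfrac1{|B|}\int_B[g(\vec f)]^2=\tfrac1{|B|}\int_B A+\tfrac1{|B|}\int_B D\le D(x_0)+C\Pi^2$, so subtracting gives $\tfrac1{|B|}\int_B([g(\vec f)]^2-\operatorname*{ess\,inf}_B[g(\vec f)]^2)\lesssim\Pi^2$; since $B$ is arbitrary this is the $\mathrm{BLO}$ bound, and since $A<\infty$ a.e.\ on every ball while $D<\infty$ everywhere on every ball through a finiteness point, $g(\vec f)$ is either infinite everywhere or finite a.e.

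The hard part is the scale-uniform bound $|\mathcal G_t(\vec f)(x)|\lesssim\Pi$. It cannot be bypassed: $D(x_0)$ is in general not controlled by $\Pi^2$ — indeed it can be $+\infty$, which is precisely why the ``infinite everywhere'' alternative occurs — so there is no way to bound $|D(x)-D(x_0)|$ by a Cauchy--Schwarz estimate that leaves a factor $|\mathcal G_t(\vec f)(x_0)|$ behind unless that factor is itself uniformly bounded. Proving it — and the companion oscillation bound — forces one to control the logarithmic growth of $\mathrm{BMO}$ functions across all dyadic annuli at once, in the $m$ variables simultaneously: the multilinearity inflates the polynomial weight in the resulting series to $(\max_i k_i+1)^{2m}$, and one must check that the geometric gain $2^{-(\max_i k_i)\delta}$ (respectively $2^{-(\max_i k_i)(\delta+\gamma)}$) still beats it. The remaining pieces — the reduction by the vanishing condition, the $L^2$-bound of Theorem~\ref{gs1}, John--Nirenberg, and the elementary arithmetic combining $A$ and $D$ — are routine.
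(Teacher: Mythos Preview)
Your proposal is correct and follows essentially the same strategy as the paper's proof: split at $t=r$, control the small-$t$ piece $A$ in $L^1$-mean over $B$ via the decomposition $f_i=(f_i)_{2B}+f_i^0+f_i^\infty$ (using the $L^{2m}\times\cdots\times L^{2m}\to L^2$ bound on the all-local term and the kernel size condition on the mixed terms), and control the large-$t$ piece $D$ by combining a uniform bound $|\mathcal G_t(\vec f)|\lesssim\Pi$ with an oscillation estimate coming from the kernel smoothness.

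The organizational differences are minor. You isolate a pointwise-in-$t$ oscillation estimate $|\mathcal G_t(\vec f)(x)-\mathcal G_t(\vec f)(x_0)|\lesssim (r/t)^{\gamma'}\Pi$ with constants $c_i=(f_i)_{B(x_0,t)}$ adapted to the scale $t$, whereas the paper takes $c_i=(f_i)_{2B}$ fixed and integrates $\int_r^\infty|\mathcal G_t(\vec f)(x)-\mathcal G_t(\vec f)(y)|\,\frac{dt}{t}$ directly (interchanging the $t$- and $y$-integrals rather than extracting a decay rate). Your final assembly pins down the center $x_0$ as the comparison point for $D$, while the paper uses the inequality $\mathcal F(x)-\operatorname*{ess\,inf}_B\mathcal F\le\operatorname*{ess\,sup}_{y\in B}|\mathcal F(x)-\mathcal F(y)|$; both are equivalent once one knows $D$ is finite at \emph{some} point of $B$, which in the ``latter case'' follows from the oscillation bound and the a.e.\ finiteness hypothesis. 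Your version has the aesthetic advantage of making the $(r/t)^{\gamma'}$ gain explicit; the paper's has the advantage of never needing to check that the smoothness condition applies slotwise on the telescoped kernel difference (it only uses the one-variable smoothness directly).
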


\begin{thm}\label{mainthm2}
For any $\vec{f}=(f_1,f_2,\dots,f_m)\in [\mathrm{BMO}(\mathbb R^n)]^{m}$ and $2\leq m\in \mathbb{N}$, then $S(\vec{f})$ is either infinite everywhere or finite almost everywhere, and in the latter case, there exists a positive constant $C$, independent of $\vec{f}$, such that
\begin{equation*}
\big\|\big[S(\vec{f})\big]^2\big\|_{\mathrm{BLO}}\leq C\prod_{i=1}^m\big\|f_i\big\|^2_{\mathrm{BMO}}.
\end{equation*}
\end{thm}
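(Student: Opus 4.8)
\textbf{Proof sketch of Theorem \ref{mainthm2}.} The plan is to adapt the method of Meng and Yang for the linear case (Theorem \ref{thm12}) to the $m$-linear setting and to the cone integral defining $S(\vec f)$. We may assume $S(\vec f)$ is finite almost everywhere (otherwise there is nothing to prove for the norm estimate). Two preliminary facts will be used repeatedly. First, a sub-additivity principle for $\mathrm{BLO}$: if $F = F_1 + F_2$ with $F_1, F_2 \ge 0$, then for every ball $\mathcal B$, since $\operatorname*{ess\,inf}_{\mathcal B}F \ge \operatorname*{ess\,inf}_{\mathcal B}F_2$ we have $\frac{1}{m(\mathcal B)}\int_{\mathcal B}\big(F - \operatorname*{ess\,inf}_{\mathcal B}F\big) \le \frac{1}{m(\mathcal B)}\int_{\mathcal B}F_1 + \frac{1}{m(\mathcal B)}\int_{\mathcal B}\big(F_2 - \operatorname*{ess\,inf}_{\mathcal B}F_2\big)$; thus, working one ball at a time, it is enough to split $\big[S(\vec f)\big]^2$ into a nonnegative piece with controlled average and a nonnegative piece with controlled lower oscillation. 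Second, a uniform pointwise bound: because $\int_{\mathbb R^n}\mathcal K_t(\dots, z - y_i, \dots)\,dy_i = 0$ by the vanishing condition, $\mathcal G_t(\vec f)(z)$ is unchanged when each $f_i$ is replaced by $f_i - (f_i)_{B(z,t)}$; estimating the resulting integral by the size condition on $\mathcal K$ together with the standard dyadic (John--Nirenberg) bound for $\mathrm{BMO}$ functions gives $\big|\mathcal G_t(\vec f)(z)\big| \le C\prod_{i=1}^m\|f_i\|_{\mathrm{BMO}}$ for all $z \in \mathbb R^n$ and $t > 0$. (This estimate also shows that $\mathcal G_t(\vec f)$ is well defined for $\vec f \in [\mathrm{BMO}(\mathbb R^n)]^m$.)

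Now fix a ball $\mathcal B = B(x_0, r)$. The first step is to split the cone integral at height $t = r$, writing $\big[S(\vec f)(x)\big]^2 = V(x) + W(x)$, where $V(x) := \iint_{\Gamma(x),\, t \le r}\big|\mathcal G_t(\vec f)(z)\big|^2\,\frac{dz\,dt}{t^{n+1}}$ and $W(x) := \iint_{\Gamma(x),\, t > r}\big|\mathcal G_t(\vec f)(z)\big|^2\,\frac{dz\,dt}{t^{n+1}}$. For the outer part $W$, observe that if $x \in \mathcal B$ and $t > r$, then every $(z,t) \in \Gamma(x)\,\triangle\,\Gamma(x_0)$ satisfies $t - r < |z - x_0| < t + 2r$, so this symmetric difference (within $\{t > r\}$) lies in the thin shell $\mathcal S := \{(z,t): t > r,\ t - r < |z - x_0| < t + 2r\}$; the slice of $\mathcal S$ at height $t$ has measure $\lesssim r\,t^{n-1}$, so $\iint_{\mathcal S}\frac{dz\,dt}{t^{n+1}} \lesssim \int_r^\infty r\,t^{-2}\,dt \lesssim 1$. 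Combining this with the uniform pointwise bound yields $\big|W(x) - W(x_0)\big| \le 2\iint_{\mathcal S}\big|\mathcal G_t(\vec f)(z)\big|^2\,\frac{dz\,dt}{t^{n+1}} \le C\prod_{i=1}^m\|f_i\|_{\mathrm{BMO}}^2$ for almost every $x \in \mathcal B$; in particular, the lower oscillation of $W$ over $\mathcal B$ is at most $C\prod_{i=1}^m\|f_i\|_{\mathrm{BMO}}^2$.

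For the inner part $V$, I would subtract $(f_i)_{4\mathcal B}$ from each $f_i$ (this does not change $\mathcal G_t(\vec f)$) and decompose $f_i = f_i^0 + f_i^\infty$ with $f_i^0 := (f_i - (f_i)_{4\mathcal B})\chi_{4\mathcal B}$ and $f_i^\infty := (f_i - (f_i)_{4\mathcal B})\chi_{(4\mathcal B)^{\complement}}$. By multilinearity, $\mathcal G_t(\vec f) = \sum_{E \subseteq \{1,\dots,m\}}\mathcal G_t(\vec h^E)$, where $\vec h^E$ has $i$-th entry $f_i^0$ if $i \in E$ and $f_i^\infty$ if $i \notin E$. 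For the all-local term $\vec h^{\{1,\dots,m\}} = \vec f^0$, one has $\iint_{\Gamma(x),\, t \le r}\big|\mathcal G_t(\vec f^0)(z)\big|^2\,\frac{dz\,dt}{t^{n+1}} \le \big[S(\vec f^0)(x)\big]^2$, and Theorem \ref{gs1} applied with $p_1 = \cdots = p_m = 2m$ (so $p = 2$), together with the John--Nirenberg inequality, gives $\frac{1}{m(\mathcal B)}\int_{\mathcal B}\big[S(\vec f^0)(x)\big]^2\,dx \le \frac{C}{r^n}\prod_{i=1}^m\|f_i^0\|_{L^{2m}}^2 \le C\prod_{i=1}^m\|f_i\|_{\mathrm{BMO}}^2$. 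For each of the remaining sets $E$, the tuple $\vec h^E$ contains a factor $f_j^\infty$ supported in $(4\mathcal B)^{\complement}$, so for $0 < t \le r$ and $z$ in the relevant range ($|z - x_0| < 2r$) one has $|z - y_j| \ge 2r > t$; the size and smoothness conditions on $\mathcal K$ together with the usual dyadic $\mathrm{BMO}$ tail estimates then give $\big|\mathcal G_t(\vec h^E)(z)\big| \le C\,(t/r)^{\delta}\big(1 + \log^{+}(r/t)\big)^{m-1}\prod_{i=1}^m\|f_i\|_{\mathrm{BMO}}$, and since $\int_0^r (t/r)^{2\delta}\big(1 + \log^{+}(r/t)\big)^{2(m-1)}\,\frac{dt}{t} < \infty$, this term contributes at most $C\prod_{i=1}^m\|f_i\|_{\mathrm{BMO}}^2$ (even pointwise). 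Summing over $E$ gives $\frac{1}{m(\mathcal B)}\int_{\mathcal B}V(x)\,dx \le C\prod_{i=1}^m\|f_i\|_{\mathrm{BMO}}^2$.

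Combining the two parts through the sub-additivity principle yields $\frac{1}{m(\mathcal B)}\int_{\mathcal B}\big([S(\vec f)(x)]^2 - \operatorname*{ess\,inf}_{\mathcal B}[S(\vec f)]^2\big)\,dx \le C\prod_{i=1}^m\|f_i\|_{\mathrm{BMO}}^2$, and taking the supremum over all balls $\mathcal B$ gives the desired bound $\big\|[S(\vec f)]^2\big\|_{\mathrm{BLO}} \le C\prod_{i=1}^m\|f_i\|_{\mathrm{BMO}}^2$. The dichotomy follows from the same decomposition: if $S(\vec f)(x_0) < \infty$ for a single point $x_0$, then for every $\mathcal B = B(x_0,r)$ both $V(x_0)$ and $W(x_0)$ are finite; the estimate for $V$ shows $V < \infty$ almost everywhere on $\mathcal B$ (since $S(\vec f^0) \in L^2$), while $|W(x) - W(x_0)| \le C\prod_i\|f_i\|_{\mathrm{BMO}}^2$ shows $W < \infty$ almost everywhere on $\mathcal B$, so $S(\vec f) < \infty$ almost everywhere on $\mathcal B$; letting $r \to \infty$ shows $S(\vec f) < \infty$ almost everywhere on $\mathbb R^n$, hence $S(\vec f)$ is either infinite everywhere or finite almost everywhere. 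The main obstacle is the outer term $W(x)$: it may be infinite at every point of $\mathcal B$, so it cannot be controlled in absolute value, and the key point is the thin-shell comparison, which bounds its oscillation over $\mathcal B$ irrespective of its size. The remaining effort — establishing the multilinear $\mathrm{BMO}$ tail estimates for the mixed terms $\mathcal G_t(\vec h^E)$ with the correct gain $(t/r)^\delta$ — is routine but somewhat lengthy.
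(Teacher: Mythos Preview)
Your argument is correct, and the overall architecture---splitting at $t=r$, handling the inner part via the $L^{2m}\times\cdots\times L^{2m}\to L^2$ bound plus size-condition tail estimates, and controlling only the \emph{oscillation} of the outer part---matches the paper. The genuine difference is in how you treat the outer piece $W$. The paper reparametrizes the cone as $\{|z|<t\}$ and compares \emph{integrands}: it writes $[S_\infty(\vec f)(x)]^2-[S_\infty(\vec f)(y)]^2$ as an integral of $|\mathcal G_t(\vec f)(x+z)|^2-|\mathcal G_t(\vec f)(y+z)|^2$, factors as $(|a|+|b|)\,|a-b|$, and bounds $|\mathcal G_t(\vec f)(x+z)-\mathcal G_t(\vec f)(y+z)|$ using the \emph{smoothness} condition on $\mathcal K$. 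You instead compare \emph{domains}: since $\Gamma(x)\triangle\Gamma(x_0)$ (for $t>r$) is contained in a shell of width $\lesssim r$ about the cone boundary, with $\iint_{\mathcal S}t^{-n-1}\,dz\,dt\lesssim 1$, the uniform bound $|\mathcal G_t(\vec f)|\lesssim\prod_i\|f_i\|_{\mathrm{BMO}}$ alone controls $|W(x)-W(x_0)|$. Your route is more elementary and, notably, does not invoke the kernel smoothness condition anywhere in the $S$-estimate (the mixed terms in $V$ need only the size condition, despite your passing mention of smoothness), whereas the paper's argument for $J_\infty$ genuinely uses it. The trade-off is that your thin-shell trick is specific to the cone geometry of $S$ and does not transfer to the $g$-function, while the paper's smoothness-based comparison is written so that the proofs of Theorems~\ref{mainthm1} and~\ref{mainthm2} run in parallel. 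A minor point: your claimed decay $(t/r)^\delta(1+\log^+(r/t))^{m-1}$ for the mixed terms is a slight overestimate---the paper obtains a clean $(t/r)^\delta$ via the joint dyadic decomposition $(2^{j+1}\mathcal B)^\ell\setminus(2^j\mathcal B)^\ell$---but either bound suffices after integrating $\int_0^r(\cdot)^2\,dt/t$.
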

Note that for any given ball $\mathcal{B}$ in $\mathbb R^n$ and for any $x\in \mathcal{B}$, if
\begin{equation*}
\underset{y\in\mathcal{B}}{\mathrm{ess\,inf}}\,\big[\mathcal{F}(y)\big]<+\infty,
\end{equation*}
then
\begin{equation*}
\big[\mathcal{F}(x)\big]-\underset{y\in\mathcal{B}}{\mathrm{ess\,inf}}\,\big[\mathcal{F}(y)\big]\leq
\Big(\big[\mathcal{F}(x)\big]^2-\underset{y\in\mathcal{B}}{\mathrm{ess\,inf}}\,\big[\mathcal{F}(y)\big]^2\Big)^{1/2},
\end{equation*}
which in turn implies that
\begin{equation}\label{BLOsquare}
\big\|\mathcal{F}\big\|^2_{\mathrm{BLO}}\leq\big\|\mathcal{F}^2\big\|_{\mathrm{BLO}}.
\end{equation}

As an immediate consequence of Theorem \ref{mainthm1} and Theorem \ref{mainthm2}, we have the following results.

\begin{cor}
For any $\vec{f}=(f_1,f_2,\dots,f_m)\in [\mathrm{BMO}(\mathbb R^n)]^{m}$ and $2\leq m\in \mathbb{N}$, then $g(\vec{f})$ is either infinite everywhere or finite almost everywhere, and in the latter case, we have
\begin{equation*}
\big\|g(\vec{f})\big\|_{\mathrm{BLO}}\lesssim\prod_{i=1}^m\big\|f_i\big\|_{\mathrm{BMO}}.
\end{equation*}
\end{cor}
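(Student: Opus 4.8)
The plan is to derive this corollary directly from Theorem~\ref{mainthm1} together with the pointwise inequality that produced~\eqref{BLOsquare}; no fresh analysis of the kernel $\mathcal{K}$ is required. First I would note that the dichotomy --- $g(\vec{f})$ is either infinite everywhere or finite almost everywhere --- is exactly the first assertion of Theorem~\ref{mainthm1}, so it may simply be quoted; it then remains only to treat the case in which $g(\vec{f})$ is finite almost everywhere.

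In that case put $\mathcal{F}:=g(\vec{f})$, a nonnegative measurable function which is finite a.e. Applying~\eqref{BLOsquare} to this $\mathcal{F}$ gives
\begin{equation*}
\big\|g(\vec{f})\big\|_{\mathrm{BLO}}^2\leq\big\|\big[g(\vec{f})\big]^2\big\|_{\mathrm{BLO}},
\end{equation*}
and Theorem~\ref{mainthm1} furnishes a constant $C>0$, independent of $\vec{f}$, with
\begin{equation*}
\big\|\big[g(\vec{f})\big]^2\big\|_{\mathrm{BLO}}\leq C\prod_{i=1}^m\big\|f_i\big\|_{\mathrm{BMO}}^2.
\end{equation*}
Chaining these two estimates and extracting a square root yields $\|g(\vec{f})\|_{\mathrm{BLO}}\leq C^{1/2}\prod_{i=1}^m\|f_i\|_{\mathrm{BMO}}$, which is the desired bound.

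I do not expect any genuine obstacle: the corollary is a formal consequence of the already-established Theorem~\ref{mainthm1}. The only point deserving a line of justification is~\eqref{BLOsquare} itself, i.e.\ that for a nonnegative $\mathcal{F}$ and any ball $\mathcal{B}$ on which $\underset{y\in\mathcal{B}}{\mathrm{ess\,inf}}\,[\mathcal{F}(y)]^2<+\infty$ one has, for a.e.\ $x\in\mathcal{B}$,
\begin{equation*}
\mathcal{F}(x)-\underset{y\in\mathcal{B}}{\mathrm{ess\,inf}}\,\mathcal{F}(y)\leq\Big([\mathcal{F}(x)]^2-\underset{y\in\mathcal{B}}{\mathrm{ess\,inf}}\,[\mathcal{F}(y)]^2\Big)^{1/2},
\end{equation*}
which is the elementary inequality $a-b\leq\sqrt{a^2-b^2}$ valid for $0\leq b\leq a$, applied with $a=\mathcal{F}(x)$ and $b=\underset{y\in\mathcal{B}}{\mathrm{ess\,inf}}\,\mathcal{F}(y)=\big(\underset{y\in\mathcal{B}}{\mathrm{ess\,inf}}\,[\mathcal{F}(y)]^2\big)^{1/2}$; averaging over $\mathcal{B}$ then gives~\eqref{BLOsquare}. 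Since this is already recorded in the paragraph preceding~\eqref{BLOsquare}, the corollary follows at once, and the analogous statement for $S(\vec{f})$ follows identically from Theorem~\ref{mainthm2}.
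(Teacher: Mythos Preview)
Your proposal is correct and matches the paper's approach exactly: the paper states that the corollary is ``an immediate consequence of Theorem~\ref{mainthm1}'' via the inequality~\eqref{BLOsquare}, and your argument spells out precisely this deduction. The only step you might make explicit is that after averaging the pointwise inequality over $\mathcal{B}$ one applies Jensen's inequality (concavity of the square root) to pass from the average of $\sqrt{\mathcal{F}^2-\mathrm{ess\,inf}\,\mathcal{F}^2}$ to the square root of its average, but the paper omits this too.
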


\begin{cor}
For any $\vec{f}=(f_1,f_2,\dots,f_m)\in [\mathrm{BMO}(\mathbb R^n)]^{m}$ and $2\leq m\in \mathbb{N}$, then $S(\vec{f})$ is either infinite everywhere or finite almost everywhere, and in the latter case, we have
\begin{equation*}
\big\|S(\vec{f})\big\|_{\mathrm{BLO}}\lesssim\prod_{i=1}^m\big\|f_i\big\|_{\mathrm{BMO}}.
\end{equation*}
\end{cor}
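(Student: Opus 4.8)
The plan is to obtain this corollary as an immediate consequence of Theorem \ref{mainthm2} and the elementary oscillation inequality \eqref{BLOsquare}, so that no work beyond Theorem \ref{mainthm2} is actually required. First I would record that the dichotomy is already furnished by Theorem \ref{mainthm2}: for $\vec f\in[\mathrm{BMO}(\mathbb R^n)]^m$ the function $S(\vec f)$ is either infinite everywhere or finite almost everywhere on $\mathbb R^n$. Assuming we are in the latter case, set $\mathcal F:=S(\vec f)$, a nonnegative function that is finite almost everywhere, so that the $\mathrm{BLO}$ quantities below are meaningful.

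Next, Theorem \ref{mainthm2} gives $[S(\vec f)]^2=\mathcal F^2\in\mathrm{BLO}(\mathbb R^n)$ together with the quantitative bound $\|\mathcal F^2\|_{\mathrm{BLO}}\le C\prod_{i=1}^m\|f_i\|_{\mathrm{BMO}}^2$. Applying \eqref{BLOsquare} to $\mathcal F$, namely $\|\mathcal F\|_{\mathrm{BLO}}^2\le\|\mathcal F^2\|_{\mathrm{BLO}}$, and combining the two estimates yields $\|S(\vec f)\|_{\mathrm{BLO}}^2\le C\prod_{i=1}^m\|f_i\|_{\mathrm{BMO}}^2$. Taking square roots gives the asserted bound $\|S(\vec f)\|_{\mathrm{BLO}}\lesssim\prod_{i=1}^m\|f_i\|_{\mathrm{BMO}}$, which completes the argument. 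In this form there is no real obstacle: the only subtlety is the harmless one of making sure the $\mathrm{BLO}$ norm is well defined, which is exactly what the ``finite almost everywhere'' alternative of Theorem \ref{mainthm2} guarantees.

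For completeness I note that all the substance sits in Theorem \ref{mainthm2}, and a self-contained proof would have to reproduce that argument: fix a ball $\mathcal B=B(x_0,r)$, split each $f_i=(f_i-(f_i)_{\widetilde{\mathcal B}})+(f_i)_{\widetilde{\mathcal B}}$ over a suitable dilate $\widetilde{\mathcal B}$, expand the $m$-linear operator into its pieces according to multilinearity, control the ``local'' pieces through the weak-type bound $S:L^1\times\cdots\times L^1\to L^{1/m,\infty}$ of Theorem \ref{gs1} together with Kolmogorov's inequality, control the ``global''/tail pieces via the size and smoothness conditions on $\mathcal K$ from Definition \ref{defin12} together with the John--Nirenberg inequality, and finally extract a lower bound for $S(\vec f)$ on $\mathcal B$ to match the essential infimum appearing in the $\mathrm{BLO}$ norm. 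In such a direct approach the main obstacle would be the global tail estimate combined with the bookkeeping for the mixed cross terms in the $m$-linear expansion; but since Theorem \ref{mainthm2} already supplies all of this, the corollary follows at once by the two-line deduction above.
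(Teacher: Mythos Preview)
Your proposal is correct and matches the paper's approach: the corollary is stated there as an immediate consequence of Theorem~\ref{mainthm2} together with the elementary inequality~\eqref{BLOsquare}, which is exactly the two-line deduction you give. Your supplementary sketch of how one would reprove Theorem~\ref{mainthm2} differs in one inessential detail (the paper controls the local piece via the strong-type $L^{2m}\times\cdots\times L^{2m}\to L^2$ bound rather than the weak-type endpoint plus Kolmogorov), but this does not affect the proof of the corollary itself.
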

Here the implicit constant is independent of $\vec{f}=(f_1,f_2,\dots,f_m)$.

\section{Proofs of Theorems \ref{mainthm1} and \ref{mainthm2}}
In this section, we will give the proofs of Theorem \ref{mainthm1} and Theorem \ref{mainthm2}. We first remark that the (a.e.)existence of the bilinear Littlewood--Paley operators has been proved in \cite{he}, under the assumption of one point finiteness. The multilinear case $m>2$ can be shown in the same way. We can also obtain that for the multilinear Littlewood--Paley $g$-function $g(\vec{f})$ and multilinear Lusin's area integral $S(\vec{f})$, if both $g(\vec{f})(x_0)$ and $S(\vec{f})(x_0)$ are finite for some $x_0\in\mathbb R^n$, then $g(\vec{f})(x)$ and $S(\vec{f})(x)$ are finite almost everywhere. We will establish boundedness properties of the multilinear Littlewood--Paley operators in the product of BMO spaces. The following result about BMO functions is well known, see, for example, \cite{duoand} and \cite{grafakos}.

\begin{lem}\label{BMOp}
Let $f\in \mathrm{BMO}(\mathbb R^n)$. Then the following properties hold.
\begin{enumerate}
  \item For any $1\leq p<\infty$ and for any ball $\mathcal{B}$ in $\mathbb R^n$, we get
\begin{equation*}
\bigg(\frac{1}{m(\mathcal{B})}\int_{\mathcal{B}}\big|f(x)-f_{\mathcal{B}}\big|^p\,dx\bigg)^{1/p}
\leq C\big\|f\big\|_{\mathrm{BMO}}.
\end{equation*}
  \item For every $k\in \mathbb{N}$, we get
\begin{equation*}
\frac{1}{m(2^k\mathcal{B})}\int_{2^k\mathcal{B}}\big|f(x)-f_{\mathcal{B}}\big|\,dx\leq Ck\cdot\big\|f\big\|_{\mathrm{BMO}}.
\end{equation*}
\end{enumerate}
Here the constant $C$ is independent of $k$ and $f$.
\end{lem}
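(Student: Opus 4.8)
The plan is to reduce part (1) to the John--Nirenberg inequality and to obtain part (2) by a telescoping argument over the dyadic dilates of $\mathcal{B}$. For part (1), recall that the John--Nirenberg inequality gives positive constants $c_1,c_2$, depending only on the dimension $n$, such that for every ball $\mathcal{B}$ and every $\lambda>0$,
\begin{equation*}
m\big(\big\{x\in\mathcal{B}:|f(x)-f_{\mathcal{B}}|>\lambda\big\}\big)
\leq c_1\,m(\mathcal{B})\exp\!\Big(-\frac{c_2\lambda}{\|f\|_{\mathrm{BMO}}}\Big).
\end{equation*}
First I would normalize, assuming $\|f\|_{\mathrm{BMO}}=1$ (the general case follows by replacing $f$ with $f/\|f\|_{\mathrm{BMO}}$, which is legitimate as long as $\|f\|_{\mathrm{BMO}}\neq 0$; if $\|f\|_{\mathrm{BMO}}=0$ then $f$ is a.e. constant and the inequality is trivial). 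Then I would write
\begin{equation*}
\frac{1}{m(\mathcal{B})}\int_{\mathcal{B}}|f(x)-f_{\mathcal{B}}|^p\,dx
=\frac{1}{m(\mathcal{B})}\int_0^{\infty} p\lambda^{p-1}\,
m\big(\big\{x\in\mathcal{B}:|f(x)-f_{\mathcal{B}}|>\lambda\big\}\big)\,d\lambda,
\end{equation*}
insert the John--Nirenberg bound, and evaluate the resulting integral $\int_0^\infty p\lambda^{p-1}c_1 e^{-c_2\lambda}\,d\lambda = c_1 p\,\Gamma(p)\,c_2^{-p} = c_1\,\Gamma(p+1)\,c_2^{-p}$, which is a finite constant depending only on $n$ and $p$. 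Taking $p$-th roots yields part (1).

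For part (2), I would use the standard fact that for concentric balls $\mathcal{B}\subset 2\mathcal{B}$ one has $|f_{\mathcal{B}}-f_{2\mathcal{B}}|\le \frac{1}{m(\mathcal{B})}\int_{\mathcal{B}}|f(x)-f_{2\mathcal{B}}|\,dx \le \frac{m(2\mathcal{B})}{m(\mathcal{B})}\cdot\frac{1}{m(2\mathcal{B})}\int_{2\mathcal{B}}|f(x)-f_{2\mathcal{B}}|\,dx\le 2^n\|f\|_{\mathrm{BMO}}$, and more generally $|f_{2^j\mathcal{B}}-f_{2^{j+1}\mathcal{B}}|\le 2^n\|f\|_{\mathrm{BMO}}$ for every $j\ge 0$. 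Telescoping gives
\begin{equation*}
|f_{2^k\mathcal{B}}-f_{\mathcal{B}}|\le\sum_{j=0}^{k-1}|f_{2^{j+1}\mathcal{B}}-f_{2^j\mathcal{B}}|\le 2^n k\,\|f\|_{\mathrm{BMO}}.
\end{equation*}
Combining this with the triangle inequality,
\begin{equation*}
\frac{1}{m(2^k\mathcal{B})}\int_{2^k\mathcal{B}}|f(x)-f_{\mathcal{B}}|\,dx
\le\frac{1}{m(2^k\mathcal{B})}\int_{2^k\mathcal{B}}|f(x)-f_{2^k\mathcal{B}}|\,dx+|f_{2^k\mathcal{B}}-f_{\mathcal{B}}|
\le\|f\|_{\mathrm{BMO}}+2^n k\,\|f\|_{\mathrm{BMO}},
\end{equation*}
which is bounded by $Ck\|f\|_{\mathrm{BMO}}$ with $C=2^n+1$ (absorbing the first term since $k\ge 1$). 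In both parts the constant $C$ depends only on $n$ (and on $p$ in part (1)) and is independent of $k$ and $f$, as claimed.

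There is no serious obstacle here: the only mildly delicate point is that part (1) genuinely requires the exponential decay from John--Nirenberg rather than just the defining $L^1$ estimate, so I would make sure to invoke that inequality explicitly rather than attempting a self-contained argument. Everything else is a routine computation with the distribution-function formula and a telescoping sum.
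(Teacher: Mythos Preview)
Your proof is correct and entirely standard. The paper itself does not prove this lemma at all: it simply states it as well known and refers the reader to the textbooks of Duoandikoetxea and Grafakos. Your argument---John--Nirenberg plus the layer-cake formula for part~(1), and the telescoping estimate $|f_{2^{j+1}\mathcal{B}}-f_{2^j\mathcal{B}}|\le 2^n\|f\|_{\mathrm{BMO}}$ for part~(2)---is precisely the proof one finds in those references, so in effect you have supplied what the paper omits.
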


Let $\mathcal{F}$ be a real-valued nonnegative function and measurable on $\mathbb R^n$. For each fixed ball $\mathcal{B}\subset\mathbb R^n$, we also need the following estimate about the relationship between essential supremum and essential infimum.
\begin{equation}\label{essinf}
\mathcal{F}(x)-\underset{y\in\mathcal{B}}{\mathrm{ess\,inf}}\,\mathcal{F}(y)
\leq \underset{y\in\mathcal{B}}{\mathrm{ess\,sup}}\big|\mathcal{F}(x)-\mathcal{F}(y)\big|.
\end{equation}

\begin{proof}[Proof of Theorem $\ref{mainthm1}$]
Let $\vec{f}=(f_1,f_2,\dots,f_m)\in [\mathrm{BMO}(\mathbb R^n)]^{m}$. By the definition of $\mathrm{BLO}(\mathbb R^n)$, it suffices to show that for any given ball $\mathcal{B}=B(x_0,r)\subset\mathbb R^n$ with center $x_0\in \mathbb R^n$ and radius $r\in(0,\infty)$, the following inequality holds:
\begin{equation}\label{mainesti1}
\frac{1}{m(\mathcal{B})}\int_{\mathcal{B}}
\Big[\big[g(\vec{f})(x)\big]^2-\underset{y\in\mathcal{B}}{\mathrm{ess\,inf}}\,\big[g(\vec{f})(y)\big]^2\Big]\,dx
\lesssim\prod_{i=1}^m\big\|f_i\big\|^2_{\mathrm{BMO}}.
\end{equation}
First of all, we decompose the integral defining $g(\vec{f})$ into two parts.
\begin{equation*}
\begin{split}
\big[g(\vec{f})(x)\big]^2&=\bigg(\int_0^{\infty}\big|\mathcal{G}_t(\vec{f})(x)\big|^2\frac{dt}{t}\bigg)\\
&=\int_0^{r}\big|\mathcal{G}_t(\vec{f})(x)\big|^2\frac{dt}{t}+\int_{r}^{\infty}\big|\mathcal{G}_t(\vec{f})(x)\big|^2\frac{dt}{t}\\
&:=\big[g_0(\vec{f})(x)\big]^2+\big[g_{\infty}(\vec{f})(x)\big]^2.
\end{split}
\end{equation*}
Consequently, in view of \eqref{essinf}, we can deduce that
\begin{equation*}
\begin{split}
&\frac{1}{m(\mathcal{B})}\int_{\mathcal{B}}
\Big[\big[g(\vec{f})(x)\big]^2-\underset{y\in\mathcal{B}}{\mathrm{ess\,inf}}\,\big[g(\vec{f})(y)\big]^2\Big]\,dx\\
&=\frac{1}{m(\mathcal{B})}\int_{\mathcal{B}}\Big[\big[g_0(\vec{f})(x)\big]^2+\big[g_{\infty}(\vec{f})(x)\big]^2
-\underset{y\in\mathcal{B}}{\mathrm{ess\,inf}}\,\big[g(\vec{f})(y)\big]^2\Big]\,dx\\
&\leq\frac{1}{m(\mathcal{B})}\int_{\mathcal{B}}\Big[\big[g_0(\vec{f})(x)\big]^2+\big[g_{\infty}(\vec{f})(x)\big]^2
-\underset{y\in\mathcal{B}}{\mathrm{ess\,inf}}\,\big[g_{\infty}(\vec{f})(y)\big]^2\Big]\,dx\\
&\leq\frac{1}{m(\mathcal{B})}\int_{\mathcal{B}}\big[g_0(\vec{f})(x)\big]^2dx
+\frac{1}{m(\mathcal{B})}\int_{\mathcal{B}}
\underset{y\in\mathcal{B}}{\mathrm{ess\,sup}}\Big|\big[g_{\infty}(\vec{f})(x)\big]^2-\big[g_{\infty}(\vec{f})(y)\big]^2\Big|\,dx\\
&:=I_0+I_{\infty}.
\end{split}
\end{equation*}
Let us first estimate the term $I_0$. For any $1\leq i\leq m$, we decompose the function $f_i$ as
\begin{equation*}
f_i=(f_i)_{2\mathcal{B}}+[f_i-(f_i)_{2\mathcal{B}}]\cdot\chi_{2\mathcal{B}}
+[f_i-(f_i)_{2\mathcal{B}}]\cdot\chi_{(2\mathcal{B})^{\complement}}:=f_i^1+f_i^2+f_i^3,
\end{equation*}
where $2\mathcal{B}=B(x_0,2r)$, $(2\mathcal{B})^{\complement}=\mathbb R^n\setminus(2\mathcal{B})$ and $\chi_{E}$ denotes the
characteristic function of the set $E$. Then we write
\begin{equation}\label{sum123}
\begin{split}
\prod_{i=1}^m f_i(y_i)&=\prod_{i=1}^m\big[f_i^1(y_i)+f_i^2(y_i)+f_i^3(y_i)\big]\\
&=\sum_{\alpha_1,\dots,\alpha_m\in\{1,2,3\}}f^{\alpha_1}_1(y_1)\cdots f^{\alpha_m}_m(y_m),
\end{split}
\end{equation}
and hence
\begin{equation*}
\begin{split}
&\mathcal{G}_t(\vec{f})(x)=\mathcal{G}_t(f_1,\dots,f_m)(x)\\
&=\sum_{\alpha_1,\dots,\alpha_m\in\{1,2,3\}}\int_{(\mathbb R^n)^m}\mathcal{K}_t(x-y_1,\dots,x-y_m)
f^{\alpha_1}_1(y_1)\cdots f^{\alpha_m}_m(y_m)\,dy_1\cdots dy_m\\
&=\sum_{\alpha_1,\dots,\alpha_m\in\{1,2,3\}}\mathcal{G}_t(f^{\alpha_1}_1,\dots,f^{\alpha_m}_m)(x).
\end{split}
\end{equation*}
Observe that if $\alpha_{j}=1$ for some $1\leq j\leq m$, then $\mathcal{G}_t(f^{\alpha_1}_1,\dots,f^{\alpha_m}_m)(x)=0$ for any $x\in \mathcal{B}$, by the vanishing condition of the kernel $\mathcal{K}$.
Thus
\begin{equation*}
\begin{split}
I_0&=\frac{1}{m(\mathcal{B})}\int_{\mathcal{B}}\big[g_0(f_1,\dots,f_m)(x)\big]^2dx\\
&=\frac{1}{m(\mathcal{B})}\int_{\mathcal{B}}\big[g_0(f^2_1,\dots,f^2_m)(x)\big]^2dx+
\sum_{\alpha_1,\dots,\alpha_m\in \Xi}
\frac{1}{m(\mathcal{B})}\int_{\mathcal{B}}\big[g_0(f^{\alpha_1}_1,\dots,f^{\alpha_m}_m)(x)\big]^2dx\\
&:=I^{2,\dots,2}_0+\sum_{(\alpha_1,\dots,\alpha_m)\in\Xi}I^{\alpha_1,\dots,\alpha_m}_0,
\end{split}
\end{equation*}
where
\begin{equation*}
\Xi:=\Big\{(\alpha_1,\dots,\alpha_m):\alpha_j\in\{2,3\},\mbox{there is at least one}~ \alpha_j\neq2,1\leq j\leq m\Big\};
\end{equation*}
that is, each term of $\sum$ contains at least one $\alpha_j\neq2$. According to Theorem \ref{gs1}, we know that the $m$-linear operator $g$ is bounded from $L^{2m}(\mathbb R^n)\times\cdots\times L^{2m}(\mathbb R^n)$ into $L^2(\mathbb R^n)$. This fact, together with part (1) of Lemma \ref{BMOp}, implies that
\begin{equation*}
\begin{split}
I^{2,\dots,2}_0&\leq\frac{1}{m(\mathcal{B})}\big\|g(f^2_1,\dots,f^2_m)\big\|_{L^2}^2
\leq\frac{C}{m(\mathcal{B})}\bigg[\prod_{i=1}^m\big\|f^2_i\big\|_{L^{2m}}\bigg]^2\\
&=\frac{C}{m(\mathcal{B})}\bigg[\prod_{i=1}^m\bigg(\int_{2\mathcal{B}}
\big|f_i(y_i)-(f_i)_{2\mathcal{B}}\big|^{2m}dy_i\bigg)^{\frac{1}{2m}}\bigg]^2\\
&\lesssim\frac{1}{m(\mathcal{B})}\bigg[\prod_{i=1}^m\big\|f_i\big\|_{\mathrm{BMO}}m(2\mathcal{B})^{\frac{1}{2m}}\bigg]^2
\lesssim\prod_{i=1}^m\big\|f_i\big\|^2_{\mathrm{BMO}},
\end{split}
\end{equation*}
as desired. For the other terms, we consider the case when exactly $\ell$ of the $\alpha_i$ are $3$ for some $1\leq\ell<m$. Without
loss of generality, we may assume that
\begin{equation*}
\alpha_1=\cdots=\alpha_{\ell}=3\qquad \&\qquad \alpha_{\ell+1}=\cdots=\alpha_m=2.
\end{equation*}
The remaining terms can be done from the arguments below by permuting the indices (by symmetry of the roles of $\alpha_1,\alpha_2,\dots,\alpha_m$).
A simple geometric observation shows that
\begin{equation*}
(\mathbb R^n\setminus 2\mathcal{B})^{\ell}=
\overbrace{\big(\mathbb R^n\setminus 2\mathcal{B}\big)\times\cdots\times\big(\mathbb R^n\setminus 2\mathcal{B}\big)}^{\ell}
\subset(\mathbb R^n)^{\ell}\setminus (2\mathcal{B})^{\ell},
\end{equation*}
and
\begin{equation}\label{equaw1}
(\mathbb R^n)^{\ell}\setminus (2\mathcal{B})^{\ell}=\bigcup_{j=1}^\infty
(2^{j+1}\mathcal{B})^{\ell}\setminus(2^j\mathcal{B})^{\ell},
\end{equation}
where we have used the notation $E^{\ell}=\overbrace{E\times\cdots\times E}^{\ell}$ for a measurable set $E$ and a positive integer $\ell$ with $1\leq \ell<m$ (see Figure 1 in \cite{hanwang}).
By the size condition of the kernel $\mathcal{K}$, we have
\begin{equation*}
\begin{split}
&\big|\mathcal{G}_t\big(f^{\alpha_1}_1,\dots,f^{\alpha_m}_m\big)(x)\big|
=\big|\mathcal{G}_t\big(f^{3}_1,\dots,f^{3}_{\ell},f^2_{\ell+1},\dots,f^{2}_m\big)(x)\big|\\
&=\bigg|\int_{(\mathbb R^n\setminus 2\mathcal{B})^{\ell}}\int_{(2\mathcal{B})^{m-\ell}}\mathcal{K}_t(x-y_1,\dots,x-y_m)
f^{3}_1(y_1)\cdots f^{3}_{\ell}(y_{\ell})\cdot f^2_{\ell+1}(y_{\ell+1})\cdots f^2_m(y_m)\,dy_1\cdots dy_m\bigg|\\
&\lesssim\int_{(\mathbb R^n\setminus 2\mathcal{B})^{\ell}}
\Big|\big[f_1(y_1)-(f_1)_{2\mathcal{B}}\big]\cdots\big[f_{\ell}(y_{\ell})-(f_{\ell})_{2\mathcal{B}}\big]\Big|\,dy_1\cdots dy_{\ell}\\
&\times\int_{(2\mathcal{B})^{m-\ell}}\frac{t^{\delta}}{(t+\sum_{j=1}^m|x-y_j|)^{mn+\delta}}
\Big|\big[f_{\ell+1}(y_{\ell+1})-(f_{\ell+1})_{2\mathcal{B}}\big]\cdots\big[f_m(y_m)-(f_m)_{2\mathcal{B}}\big]\Big|\,dy_{\ell+1}\cdots dy_m\\
&\leq t^{\delta}\int_{(\mathbb R^n)^{\ell}\setminus(2\mathcal{B})^{\ell}}\frac{1}{(t+\sum_{k=1}^{\ell}|x-y_k|)^{mn+\delta}}
\Big|\big[f_1(y_1)-(f_1)_{2\mathcal{B}}\big]\cdots\big[f_{\ell}(y_{\ell})-(f_{\ell})_{2\mathcal{B}}\big]\Big|\,dy_1\cdots dy_{\ell}\\
&\times\prod_{k=\ell+1}^m\int_{2\mathcal{B}}\big|f_k(y_k)-(f_k)_{2\mathcal{B}}\big|\,dy_k.\\
\end{split}
\end{equation*}
It is easy to see that when $x\in \mathcal{B}=B(x_0,r)$ and $y\in 2^{j+1}\mathcal{B}\setminus 2^j\mathcal{B}$ with $j\geq1$,
\begin{equation}\label{factw1}
|x-y|\approx |x_0-y|.
\end{equation}
This fact, together with the equation \eqref{equaw1}, gives us that
\begin{equation*}
\begin{split}
&\big|\mathcal{G}_t\big(f^{\alpha_1}_1,\dots,f^{\alpha_m}_m\big)(x)\big|\\
&\leq t^{\delta}\sum_{j=1}^{\infty}\int_{(2^{j+1}\mathcal{B})^{\ell}\setminus(2^j \mathcal{B})^{\ell}}\frac{1}{(\sum_{k=1}^{\ell}|x-y_k|)^{mn+\delta}}
\Big|\big[f_1(y_1)-(f_1)_{2\mathcal{B}}\big]\cdots\big[f_{\ell}(y_{\ell})-(f_{\ell})_{2\mathcal{B}}\big]\Big|\,dy_1\cdots dy_{\ell}\\
&\times\prod_{k=\ell+1}^m\int_{2\mathcal{B}}\big|f_k(y_k)-(f_k)_{2\mathcal{B}}\big|\,dy_k\\
\end{split}
\end{equation*}
\begin{equation*}
\begin{split}
&\lesssim t^{\delta}\sum_{j=1}^{\infty}\bigg\{\prod_{k=1}^{\ell}
\int_{2^{j+1}\mathcal{B}\setminus 2^j\mathcal{B}}
\frac{1}{(|x_0-y_k|)^{\frac{mn+\delta}{\ell}}}\big|f_k(y_k)-(f_k)_{2\mathcal{B}}\big|\,dy_k\bigg\}
\times\prod_{k=\ell+1}^m\Big[\big\|f_k\big\|_{\mathrm{BMO}}\cdot m\big(2\mathcal{B}\big)\Big]\\
&\lesssim t^{\delta}\sum_{j=1}^{\infty}\bigg\{\prod_{k=1}^{\ell}\frac{1}{m(2^j\mathcal{B})^{\frac{mn+\delta}{\ell n}}}
\int_{2^{j+1}\mathcal{B}}\big|f_k(y_k)-(f_k)_{2\mathcal{B}}\big|\,dy_k\bigg\}
\times\prod_{k=\ell+1}^m\Big[\big\|f_k\big\|_{\mathrm{BMO}}\cdot m\big(2\mathcal{B}\big)\Big].
\end{split}
\end{equation*}
Furthermore, by using part (2) of Lemma \ref{BMOp}, we can deduce that for any $x\in\mathcal{B}$,
\begin{equation*}
\begin{split}
\big|\mathcal{G}_t\big(f^{\alpha_1}_1,\dots,f^{\alpha_m}_m\big)(x)\big|
&\lesssim t^{\delta}\sum_{j=1}^{\infty}
\bigg\{\frac{1}{m(2^j\mathcal{B})^{\frac{mn+\delta}{n}}}\prod_{k=1}^{\ell}
\Big[\big\|f_k\big\|_{\mathrm{BMO}}\cdot j m\big(2^{j+1}\mathcal{B}\big)\Big]\bigg\}\\
&\times\prod_{k=\ell+1}^m\Big[\big\|f_k\big\|_{\mathrm{BMO}}\cdot m\big(2\mathcal{B}\big)\Big]\\
&\lesssim t^{\delta}\sum_{j=1}^{\infty}\bigg\{\frac{1}{m(2^j\mathcal{B})^{\frac{mn+\delta}{n}}}\cdot j^{\ell}\bigg\}
\times\prod_{k=1}^m\Big[\big\|f_k\big\|_{\mathrm{BMO}}\cdot m\big(2^j\mathcal{B}\big)\Big]\\
&=t^{\delta}\sum_{j=1}^{\infty}\bigg\{\frac{1}{m(2^j\mathcal{B})^{\frac{\delta}{n}}}\cdot j^{\ell}\bigg\}
\times\prod_{k=1}^m\big\|f_k\big\|_{\mathrm{BMO}}.
\end{split}
\end{equation*}
Therefore,
\begin{equation*}
\begin{split}
I^{\alpha_1,\dots,\alpha_m}_0
&=\frac{1}{m(\mathcal{B})}\int_{\mathcal{B}}\big[g_0(f^{\alpha_1}_1,\dots,f^{\alpha_m}_m)(x)\big]^2dx\\
&\lesssim\frac{1}{m(\mathcal{B})}\int_{\mathcal{B}}\bigg(\int_0^{r}t^{2\delta-1}dt\bigg)
\bigg\{\sum_{j=1}^{\infty}\frac{j^{\ell}}{m(2^j\mathcal{B})^{\frac{\delta}{n}}}\bigg\}^2
\times\prod_{k=1}^m\big\|f_k\big\|^2_{\mathrm{BMO}}\\
&\lesssim r^{2\delta}\bigg\{\sum_{j=1}^{\infty}\frac{j^{\ell}}{(2^j)^{\delta}r^{\delta}}\bigg\}^2
\times\prod_{k=1}^m\big\|f_k\big\|^2_{\mathrm{BMO}}\\
&\lesssim\prod_{k=1}^m\big\|f_k\big\|^2_{\mathrm{BMO}}.
\end{split}
\end{equation*}
Let us now deal with the case when $\alpha_1=\cdots=\alpha_m=3$. In this case, we also have
\begin{equation*}
(\mathbb R^n\setminus 2\mathcal{B})^{m}=
\overbrace{\big(\mathbb R^n\setminus 2\mathcal{B}\big)\times\cdots\times\big(\mathbb R^n\setminus 2\mathcal{B}\big)}^{m}
\subset(\mathbb R^n)^{m}\setminus (2\mathcal{B})^{m},
\end{equation*}
and
\begin{equation}\label{equaw2}
(\mathbb R^n)^{m}\setminus (2\mathcal{B})^{m}=\bigcup_{j=1}^\infty
(2^{j+1}\mathcal{B})^{m}\setminus(2^j\mathcal{B})^{m},
\end{equation}
where we have used the notation $E^{m}=\overbrace{E\times\cdots\times E}^{m}$ for a measurable set $E$ and $m\in \mathbb{N}$ (see Figure 1 in \cite{hanwang}). By the size condition of the kernel $\mathcal{K}$, we can see that
\begin{equation*}
\begin{split}
&\big|\mathcal{G}_t\big(f^{\alpha_1}_1,\dots,f^{\alpha_m}_m\big)(x)\big|
=\big|\mathcal{G}_t\big(f^{3}_1,\dots,f^{3}_{m}\big)(x)\big|\\
&=\bigg|\int_{(\mathbb R^n\setminus 2\mathcal{B})^{m}}\mathcal{K}_t(x-y_1,\dots,x-y_m)
f^{3}_1(y_1)\cdots f^{3}_{m}(y_{m})\,dy_1\cdots dy_m\bigg|\\
&\lesssim\int_{(\mathbb R^n)^{m}\setminus (2\mathcal{B})^{m}}\frac{t^{\delta}}{(t+\sum_{k=1}^{m}|x-y_k|)^{mn+\delta}}
\Big|\big[f_1(y_1)-(f_1)_{2\mathcal{B}}\big]\cdots\big[f_{m}(y_{m})-(f_{m})_{2\mathcal{B}}\big]\Big|\,dy_1\cdots dy_{m}.\\
\end{split}
\end{equation*}
Hence, by \eqref{equaw2} and \eqref{factw1}, we get
\begin{equation*}
\begin{split}
\big|\mathcal{G}_t\big(f^{3}_1,\dots,f^{3}_m\big)(x)\big|
&\lesssim\sum_{j=1}^{\infty}\int_{(2^{j+1}\mathcal{B})^{m}\setminus(2^j \mathcal{B})^{m}}
\frac{t^{\delta}}{(t+\sum_{k=1}^m|x-y_k|)^{mn+\delta}}\\
&\times\Big|\big[f_1(y_1)-(f_1)_{2\mathcal{B}}\big]\cdots\big[f_{m}(y_{m})-(f_{m})_{2\mathcal{B}}\big]\Big|\,dy_1\cdots dy_{m}\\
&\lesssim t^{\delta}\sum_{j=1}^{\infty}\bigg\{\prod_{k=1}^{m}
\int_{2^{j+1}\mathcal{B}\setminus 2^j\mathcal{B}}
\frac{1}{(|x_0-y_k|)^{\frac{mn+\delta}{m}}}\big|f_k(y_k)-(f_k)_{2\mathcal{B}}\big|\,dy_k\bigg\}\\
&\lesssim t^{\delta}\sum_{j=1}^{\infty}\bigg\{\prod_{k=1}^{m}\frac{1}{m(2^j\mathcal{B})^{\frac{mn+\delta}{m n}}}
\int_{2^{j+1}\mathcal{B}}\big|f_k(y_k)-(f_k)_{2\mathcal{B}}\big|\,dy_k\bigg\}.
\end{split}
\end{equation*}
Furthermore, by using part (2) of Lemma \ref{BMOp}, we can deduce that for any $x\in\mathcal{B}$,
\begin{equation*}
\begin{split}
\big|\mathcal{G}_t\big(f^{3}_1,\dots,f^{3}_m\big)(x)\big|&\lesssim t^{\delta}\sum_{j=1}^{\infty}
\bigg\{\prod_{k=1}^{m}\frac{1}{m(2^j\mathcal{B})^{\frac{mn+\delta}{mn}}}
\Big[\big\|f_k\big\|_{\mathrm{BMO}}\cdot j\big|2^{j+1}\mathcal{B}\big|\Big]\bigg\}\\
&\lesssim t^{\delta}\sum_{j=1}^{\infty}
\bigg\{\prod_{k=1}^{m}\frac{1}{m(2^j\mathcal{B})^{\frac{\delta}{mn}}}
\Big[\big\|f_k\big\|_{\mathrm{BMO}}\cdot j\Big]\bigg\}\\
&=t^{\delta}\sum_{j=1}^{\infty}\bigg\{\frac{1}{m(2^j\mathcal{B})^{\frac{\delta}{n}}}\cdot j^{m}\bigg\}
\times\prod_{k=1}^m\big\|f_k\big\|_{\mathrm{BMO}}.
\end{split}
\end{equation*}
Therefore,
\begin{equation*}
\begin{split}
I^{3,\dots,3}_0
&=\frac{1}{m(\mathcal{B})}\int_{\mathcal{B}}\big[g_0(f^{3}_1,\dots,f^{3}_m)(x)\big]^2dx\\
&\lesssim\frac{1}{m(\mathcal{B})}\int_{\mathcal{B}}\bigg(\int_0^{r}t^{2\delta-1}dt\bigg)
\bigg\{\sum_{j=1}^{\infty}\frac{j^{m}}{m(2^j\mathcal{B})^{\frac{\delta}{n}}}\bigg\}^2
\times\prod_{k=1}^m\big\|f_k\big\|^2_{\mathrm{BMO}}\\
&\lesssim r^{2\delta}\bigg\{\sum_{j=1}^{\infty}\frac{j^{m}}{(2^j)^{\delta}r^{\delta}}\bigg\}^2
\times\prod_{k=1}^m\big\|f_k\big\|^2_{\mathrm{BMO}}\\
&\lesssim\prod_{k=1}^m\big\|f_k\big\|^2_{\mathrm{BMO}}.
\end{split}
\end{equation*}
Summing up the above estimates, we conclude that
\begin{equation*}
I_0\lesssim\prod_{i=1}^m\big\|f_i\big\|^2_{\mathrm{BMO}}.
\end{equation*}
In order to estimate the other term $I_{\infty}$, we first claim that for any $2^{k}r<t\leq 2^{k+1}r$ with $k\in \mathbb{N}\cup\{0\}$, and for any $x\in \mathcal{B}=B(x_0,r)$,
\begin{equation}\label{keyestiw1}
\big|\mathcal{G}_t(\vec{f})(x)\big|\lesssim\prod_{i=1}^m\big\|f_i\big\|_{\mathrm{BMO}}.
\end{equation}
In fact, by the size condition of the kernel $\mathcal{K}$, we know that for any $t>0$, $x,y_i\in \mathbb R^n$, $i=1,2,\dots,m$,
\begin{equation}\label{A83}
\big|\mathcal{K}_t(x-y_1,\dots,x-y_m)\big|\lesssim\frac{1}{t^{mn}}.
\end{equation}
Consequently, by the vanishing condition of the kernel $\mathcal{K}$, we have
\begin{equation}\label{firstsecond}
\begin{split}
\big|\mathcal{G}_t(\vec{f})(x)\big|&=\bigg|\int_{(\mathbb R^n)^m}\mathcal{K}_t(x-y_1,\dots,x-y_m)
\bigg(\prod_{i=1}^m\big[f_i(y_i)-(f_i)_{2^{k+1}\mathcal{B}}\big]\bigg)\,dy_1\cdots dy_m\bigg|\\
&\lesssim\int_{(2^{k+1}\mathcal{B})^m}\frac{1}{t^{mn}}\prod_{i=1}^m\big|f_i(y_i)-(f_i)_{2^{k+1}\mathcal{B}}\big|\,dy_i\\
&+\int_{(\mathbb R^n)^m\setminus(2^{k+1}\mathcal{B})^m}
\frac{t^{\delta}}{(t+\sum_{i=1}^m|x-y_i|)^{mn+\delta}}
\prod_{i=1}^m\big|f_i(y_i)-(f_i)_{2^{k+1}\mathcal{B}}\big|\,dy_i.
\end{split}
\end{equation}
Since $2^{k}r<t\leq 2^{k+1}r$, the first term in \eqref{firstsecond} is bounded by
\begin{equation*}
\begin{split}
&\prod_{i=1}^m\frac{1}{(2^{k}r)^n}\int_{2^{k+1}\mathcal{B}}\big|f_i(y_i)-(f_i)_{2^{k+1}\mathcal{B}}\big|\,dy_i\\
&\lesssim\prod_{i=1}^m\frac{1}{m(2^{k+1}\mathcal{B})}\int_{2^{k+1}\mathcal{B}}\big|f_i(y_i)-(f_i)_{2^{k+1}\mathcal{B}}\big|\,dy_i\\
&\leq\prod_{i=1}^m\big\|f_i\big\|_{\mathrm{BMO}},
\end{split}
\end{equation*}
for any $k\in \mathbb{N}\cup\{0\}$. The second term in \eqref{firstsecond} is dominated by
\begin{equation*}
\begin{split}
&\sum_{j=k+1}^{\infty}\big(2^{k+1}r\big)^{\delta}\int_{(2^{j+1}\mathcal{B})^{m}\setminus(2^j \mathcal{B})^{m}}
\frac{1}{(\sum_{i=1}^m|x-y_i|)^{mn+\delta}}\prod_{i=1}^m\big|f_i(y_i)-(f_i)_{2^{k+1}\mathcal{B}}\big|\,dy_i\\
&\lesssim\sum_{j=k+1}^{\infty}\big(2^{k+1}r\big)^{\delta}
\bigg\{\prod_{i=1}^m\frac{1}{m(2^{j+1}\mathcal{B})^{\frac{mn+\delta}{mn}}}
\int_{2^{j+1} \mathcal{B}}\big|f_i(y_i)-(f_i)_{2^{k+1}\mathcal{B}}\big|\,dy_i\bigg\},
\end{split}
\end{equation*}
where in the last step we have used the fact that when $x\in \mathcal{B}$ and $(y_1,\dots,y_m)\in (2^{j+1}\mathcal{B})^{m}\setminus(2^j \mathcal{B})^{m}$ with $j\geq k+1$,
\begin{equation*}
\sum_{i=1}^m|x-y_i|\geq\max_{1\leq k\leq m}|x-y_k|\geq 2^{j}r\cong m(2^j \mathcal{B})^{1/n}.
\end{equation*}
Moreover, in view of part (2) of Lemma \ref{BMOp}, the above expression is further dominated by
\begin{equation*}
\begin{split}
&\sum_{j=k+1}^{\infty}\frac{(2^{k+1}r)^{\delta}}{(2^{j+1}r)^{\delta}}
\bigg\{\prod_{i=1}^m\frac{1}{m(2^{j+1}\mathcal{B})}
\int_{2^{j+1} \mathcal{B}}\big|f_i(y_i)-(f_i)_{2^{k+1}\mathcal{B}}\big|\,dy_i\bigg\}\\
&\lesssim\sum_{j=k+1}^{\infty}\frac{1}{(2^{j-k})^{\delta}}\bigg\{\prod_{i=1}^m(j-k)\cdot\big\|f_i\big\|_{\mathrm{BMO}}\bigg\}\\
&=\sum_{j=1}^{\infty}\frac{j^m}{2^{j\delta}}\cdot\prod_{i=1}^m\big\|f_i\big\|_{\mathrm{BMO}}
\lesssim\prod_{i=1}^m\big\|f_i\big\|_{\mathrm{BMO}}.
\end{split}
\end{equation*}
Combining the above estimates for both terms in \eqref{firstsecond} yields the desired result \eqref{keyestiw1}. Hence, by the triangle inequality and \eqref{keyestiw1}, we obtain that for any $x,y\in \mathcal{B}=B(x_0,r)$,
\begin{equation*}
\begin{split}
&\Big|\big[g_{\infty}(\vec{f})(x)\big]^2-\big[g_{\infty}(\vec{f})(y)\big]^2\Big|
=\bigg|\int_{r}^{\infty}\big|\mathcal{G}_t(\vec{f})(x)\big|^2-\big|\mathcal{G}_t(\vec{f})(y)\big|^2\frac{dt}{t}\bigg|\\
&\leq\int_{r}^{\infty}\Big[\big|\mathcal{G}_t(\vec{f})(x)\big|+\big|\mathcal{G}_t(\vec{f})(y)\big|\Big]
\cdot\Big|\mathcal{G}_t(\vec{f})(x)-\mathcal{G}_t(\vec{f})(y)\Big|\frac{dt}{t}\\
&\lesssim\prod_{i=1}^m\big\|f_i\big\|_{\mathrm{BMO}}
\times\int_{r}^{\infty}\Big|\mathcal{G}_t(\vec{f})(x)-\mathcal{G}_t(\vec{f})(y)\Big|\frac{dt}{t}.
\end{split}
\end{equation*}
On the other hand, by the smoothness condition of the kernel $\mathcal{K}$, we can see that when $x,y\in \mathcal{B}$ and $(y_1,\dots,y_m)\in (\mathbb R^n)^m\setminus(2\mathcal{B})^m$,
\begin{equation}\label{regularity}
\begin{split}
&\Big|\mathcal{K}_t(x-y_1,\dots,x-y_m)-\mathcal{K}_t(y-y_1,\dots,y-y_m)\Big|\\
&=\frac{1}{t^{mn}}\bigg|\mathcal{K}\Big(\frac{x-y_1}{t},\dots,\frac{x-y_m}{t}\Big)-
\mathcal{K}\Big(\frac{y-y_1}{t},\dots,\frac{y-y_m}{t}\Big)\bigg|\\
&\lesssim\frac{t^{\delta}\cdot|x-y|^{\gamma}}{(t+\sum_{i=1}^m|x-y_i|)^{mn+\delta+\gamma}}.
\end{split}
\end{equation}
Consequently, by \eqref{regularity} and the vanishing condition of the kernel $\mathcal{K}$, we find that for any $x,y\in \mathcal{B}=B(x_0,r)$,
\begin{equation}\label{firstsecond2}
\begin{split}
&\Big|\mathcal{G}_t(\vec{f})(x)-\mathcal{G}_t(\vec{f})(y)\Big|\\
=&\bigg|\int_{(\mathbb R^n)^m}\Big[\mathcal{K}_t(x-y_1,\dots,x-y_m)-\mathcal{K}_t(y-y_1,\dots,y-y_m)\Big]
\bigg(\prod_{i=1}^m\big[f_i(y_i)-(f_i)_{2\mathcal{B}}\big]\bigg)\,dy_1\cdots dy_m\bigg|\\
\lesssim&\int_{(2\mathcal{B})^m}\Big[\big|\mathcal{K}_t(x-y_1,\dots,x-y_m)\big|+\big|\mathcal{K}_t(y-y_1,\dots,y-y_m)\big|\Big]
\prod_{i=1}^m\big|f_i(y_i)-(f_i)_{2\mathcal{B}}\big|\,dy_i\\
+&\int_{(\mathbb R^n)^m\setminus(2\mathcal{B})^m}
\frac{t^{\delta}\cdot|x-y|^{\gamma}}{(t+\sum_{i=1}^m|x-y_i|)^{mn+\delta+\gamma}}
\prod_{i=1}^m\big|f_i(y_i)-(f_i)_{2\mathcal{B}}\big|\,dy_i.
\end{split}
\end{equation}
In view of \eqref{A83}, the first term in \eqref{firstsecond2} is naturally controlled by
\begin{equation*}
\begin{split}
\int_{(2\mathcal{B})^m}\frac{1}{t^{mn}}\prod_{i=1}^m\big|f_i(y_i)-(f_i)_{2\mathcal{B}}\big|\,dy_i.
\end{split}
\end{equation*}
We now proceed to estimate the second term in \eqref{firstsecond2}. By a simple calculation, we can easily see that when $t>r$, $x\in \mathcal{B}$ and $(y_1,\dots,y_m)\in (\mathbb R^n)^m\setminus(2\mathcal{B})^m$,
\begin{equation}\label{keyestiw2}
t+\sum_{i=1}^m|x-y_i|\approx t+\sum_{i=1}^m|x_0-y_i|.
\end{equation}
Then the second term in \eqref{firstsecond2} is bounded by
\begin{equation*}
t^{\delta}\cdot\int_{(\mathbb R^n)^m\setminus(2\mathcal{B})^m}
\frac{(2r)^{\gamma}}{(t+\sum_{i=1}^m|x_0-y_i|)^{mn+\delta+\gamma}}
\prod_{i=1}^m\big|f_i(y_i)-(f_i)_{2\mathcal{B}}\big|\,dy_i.
\end{equation*}
Interchanging the order of integration, we further obtain
\begin{equation*}
\begin{split}
&\int_{r}^{\infty}\Big|\mathcal{G}_t(\vec{f})(x)-\mathcal{G}_t(\vec{f})(y)\Big|\frac{dt}{t}\\
&\lesssim\int_{(2\mathcal{B})^m}\prod_{i=1}^m\big|f_i(y_i)-(f_i)_{2\mathcal{B}}\big|\,dy_i
\bigg(\int_{r}^{\infty}\frac{1}{t^{mn+1}}\,dt\bigg)\\
&+\int_{(\mathbb R^n)^m\setminus(2\mathcal{B})^m}(2r)^{\gamma}\prod_{i=1}^m\big|f_i(y_i)-(f_i)_{2\mathcal{B}}\big|\,dy_i
\bigg(\int_{r}^{\infty}\frac{t^{\delta-1}}{(t+\sum_{i=1}^m|x_0-y_i|)^{mn+\delta+\gamma}}\,dt\bigg)\\
&\lesssim\prod_{i=1}^m\frac{1}{r^n}\int_{2\mathcal{B}}\big|f_i(y_i)-(f_i)_{2\mathcal{B}}\big|\,dy_i\\
&+\sum_{j=1}^{\infty}\int_{(2^{j+1}\mathcal{B})^{m}\setminus(2^j \mathcal{B})^{m}}
(2r)^{\gamma}\prod_{i=1}^m\big|f_i(y_i)-(f_i)_{2\mathcal{B}}\big|\,dy_i
\bigg(\int_{r}^{\infty}\frac{t^{\delta-1}}{(t+\sum_{i=1}^m|x_0-y_i|)^{mn+\delta+\gamma}}\,dt\bigg).
\end{split}
\end{equation*}
Note that when $t>r$ and $(y_1,\dots,y_m)\in(2^{j+1}\mathcal{B})^{m}\setminus(2^j \mathcal{B})^{m}$ with $j\in \mathbb{N}$, there exists a positive integer $1\leq k\leq m$ such that $y_k\in 2^{j+1}\mathcal{B}\setminus2^j \mathcal{B}$ and then
\begin{equation*}
|y_k-x_0|\geq 2^jr.
\end{equation*}
Consequently, we can deduce that
\begin{equation*}
\begin{split}
&\int_{r}^{\infty}\frac{t^{\delta-1}}{(t+\sum_{i=1}^m|x_0-y_i|)^{mn+\delta+\gamma}}\,dt
\leq\int_{r}^{\infty}\frac{t^{\delta-1}}{(t+|x_0-y_k|)^{mn+\delta+\gamma}}\,dt\\
&=\int_{r}^{|x_0-y_k|}\frac{t^{\delta-1}}{(t+|x_0-y_k|)^{mn+\delta+\gamma}}\,dt
+\int_{|x_0-y_k|}^{\infty}\frac{t^{\delta-1}}{(t+|x_0-y_k|)^{mn+\delta+\gamma}}\,dt\\
&\leq\int_{r}^{|x_0-y_k|}\frac{t^{\delta-1}}{(|x_0-y_k|)^{mn+\delta+\gamma}}\,dt+
\int_{|x_0-y_k|}^{\infty}\frac{t^{\delta-1}}{t^{mn+\delta+\gamma}}\,dt\\
&\leq\int_0^{|x_0-y_k|}\frac{t^{\delta-1}}{(|x_0-y_k|)^{mn+\delta+\gamma}}\,dt+
\int_{|x_0-y_k|}^{\infty}\frac{1}{t^{mn+\gamma+1}}\,dt\lesssim\frac{1}{(|x_0-y_k|)^{mn+\gamma}}.
\end{split}
\end{equation*}
Since $f_i\in \mathrm{BMO}(\mathbb R^n)$, $i=1,2,\dots,m$, from this and part (2) of Lemma \ref{BMOp}, it follows that for any $x,y\in \mathcal{B}=B(x_0,r)$,
\begin{equation*}
\begin{split}
&\int_{r}^{\infty}\Big|\mathcal{G}_t(\vec{f})(x)-\mathcal{G}_t(\vec{f})(y)\Big|\frac{dt}{t}\\
&\lesssim\prod_{i=1}^m\frac{1}{m(2\mathcal{B})}\int_{2\mathcal{B}}\big|f_i(y_i)-(f_i)_{2\mathcal{B}}\big|\,dy_i\\
&+\sum_{j=1}^{\infty}\int_{(2^{j+1}\mathcal{B})^{m}\setminus(2^j \mathcal{B})^{m}}
\frac{(2r)^{\gamma}}{(|x_0-y_k|)^{mn+\gamma}}\prod_{i=1}^m\big|f_i(y_i)-(f_i)_{2\mathcal{B}}\big|\,dy_i\\
&\lesssim\prod_{i=1}^m\big\|f_i\big\|_{\mathrm{BMO}}
+\sum_{j=1}^{\infty}\bigg\{\frac{(2r)^{\gamma}}{(2^j r)^{\gamma}}\cdot\prod_{i=1}^m\frac{1}{m(2^{j+1}\mathcal{B})}
\int_{2^{j+1} \mathcal{B}}\big|f_i(y_i)-(f_i)_{2\mathcal{B}}\big|\,dy_i\bigg\}\\
&\lesssim\prod_{i=1}^m\big\|f_i\big\|_{\mathrm{BMO}}
+\sum_{j=1}^{\infty}\Big(\frac{1}{2^{j-1}}\Big)^{\gamma}\times
\bigg\{\prod_{i=1}^mj\cdot\big\|f_i\big\|_{\mathrm{BMO}}\bigg\}\\
&=\prod_{i=1}^m\big\|f_i\big\|_{\mathrm{BMO}}
+\sum_{j=1}^{\infty}\frac{j^m}{2^{(j-1)\gamma}}\cdot\prod_{i=1}^m\big\|f_i\big\|_{\mathrm{BMO}}
\lesssim\prod_{i=1}^m\big\|f_i\big\|_{\mathrm{BMO}}.
\end{split}
\end{equation*}
Furthermore, it follows from the previous estimates that
\begin{equation*}
\begin{split}
\Big|\big[g_{\infty}(\vec{f})(x)\big]^2-\big[g_{\infty}(\vec{f})(y)\big]^2\Big|
\lesssim &\bigg[\prod_{i=1}^m\big\|f_i\big\|_{\mathrm{BMO}}\bigg]
\times\bigg[\prod_{i=1}^m\big\|f_i\big\|_{\mathrm{BMO}}\bigg]\\
\lesssim &\prod_{i=1}^m\big\|f_i\big\|^2_{\mathrm{BMO}},
\end{split}
\end{equation*}
where in the last step we have used Cauchy's inequality. Therefore,
\begin{equation*}
\begin{split}
I_{\infty}&=\frac{1}{|\mathcal{B}|}\int_{\mathcal{B}}
\underset{y\in\mathcal{B}}{\mathrm{ess\,sup}}\Big|\big[g_{\infty}(\vec{f})(x)\big]^2-\big[g_{\infty}(\vec{f})(y)\big]^2\Big|\,dx\\
&\lesssim\prod_{i=1}^m\big\|f_i\big\|^2_{\mathrm{BMO}}.
\end{split}
\end{equation*}
Combining the above estimates for both terms $I_0$ and $I_{\infty}$ yields the desired result \eqref{mainesti1}. This completes the proof of Theorem \ref{mainthm1}.
\end{proof}

\begin{proof}[Proof of Theorem $\ref{mainthm2}$]
Let $\vec{f}=(f_1,f_2,\dots,f_m)\in [\mathrm{BMO}(\mathbb R^n)]^{m}$. By the definition of $\mathrm{BLO}(\mathbb R^n)$, it suffices to prove that for any given ball $\mathcal{B}=B(x_0,r)\subset\mathbb R^n$ with center $x_0\in \mathbb R^n$ and radius $r\in(0,\infty)$, the following inequality holds:
\begin{equation}\label{mainesti2}
\frac{1}{m(\mathcal{B})}\int_{\mathcal{B}}
\Big[\big[S(\vec{f})(x)\big]^2-\underset{y\in\mathcal{B}}{\mathrm{ess\,inf}}\,\big[S(\vec{f})(y)\big]^2\Big]\,dx
\lesssim\prod_{i=1}^m\big\|f_i\big\|^2_{\mathrm{BMO}}.
\end{equation}
To prove \eqref{mainesti2}, we first decompose the integral defining $S(\vec{f})$ into two parts.
\begin{equation*}
\begin{split}
\big[S(\vec{f})(x)\big]^2&=\bigg(\int_0^{\infty}\int_{|z-x|<t}\big|\mathcal{G}_t(\vec{f})(z)\big|^2\frac{dzdt}{t^{n+1}}\bigg)\\
&=\int_0^{r}\int_{|z-x|<t}\big|\mathcal{G}_t(\vec{f})(z)\big|^2\frac{dzdt}{t^{n+1}}
+\int_{r}^{\infty}\int_{|z-x|<t}\big|\mathcal{G}_t(\vec{f})(z)\big|^2\frac{dzdt}{t^{n+1}}\\
&:=\big[S_0(\vec{f})(x)\big]^2+\big[S_{\infty}(\vec{f})(x)\big]^2.
\end{split}
\end{equation*}
Consequently, in view of \eqref{essinf}, we can deduce that
\begin{equation*}
\begin{split}
&\frac{1}{m(\mathcal{B})}\int_{\mathcal{B}}
\Big[\big[S(\vec{f})(x)\big]^2-\underset{y\in\mathcal{B}}{\mathrm{ess\,inf}}\,\big[S(\vec{f})(y)\big]^2\Big]\,dx\\
&=\frac{1}{m(\mathcal{B})}\int_{\mathcal{B}}\Big[\big[S_0(\vec{f})(x)\big]^2+\big[S_{\infty}(\vec{f})(x)\big]^2
-\underset{y\in\mathcal{B}}{\mathrm{ess\,inf}}\,\big[S(\vec{f})(y)\big]^2\Big]\,dx\\
&\leq\frac{1}{m(\mathcal{B})}\int_{\mathcal{B}}\Big[\big[S_0(\vec{f})(x)\big]^2+\big[S_{\infty}(\vec{f})(x)\big]^2
-\underset{y\in\mathcal{B}}{\mathrm{ess\,inf}}\,\big[S_{\infty}(\vec{f})(y)\big]^2\Big]\,dx\\
&\leq\frac{1}{m(\mathcal{B})}\int_{\mathcal{B}}\big[S_0(\vec{f})(x)\big]^2dx
+\frac{1}{m(\mathcal{B})}\int_{\mathcal{B}}
\underset{y\in\mathcal{B}}{\mathrm{ess\,sup}}\Big|\big[S_{\infty}(\vec{f})(x)\big]^2-\big[S_{\infty}(\vec{f})(y)\big]^2\Big|\,dx\\
&:=J_0+J_{\infty}.
\end{split}
\end{equation*}
Let us first consider the term $J_0$. For any $1\leq i\leq m$, we decompose the function $f_i$ as
\begin{equation*}
f_i=(f_i)_{4\mathcal{B}}+[f_i-(f_i)_{4\mathcal{B}}]\cdot\chi_{4\mathcal{B}}
+[f_i-(f_i)_{4\mathcal{B}}]\cdot\chi_{(4\mathcal{B})^{\complement}}:=f_i^1+f_i^2+f_i^3.
\end{equation*}
By equation \eqref{sum123} and the vanishing condition of the kernel $\mathcal{K}$, we thus obtain
\begin{equation*}
\begin{split}
&\mathcal{G}_t(\vec{f})(z)=\mathcal{G}_t(f_1,\dots,f_m)(z)\\
&=\sum_{\alpha_1,\dots,\alpha_m\in\{1,2,3\}}\int_{(\mathbb R^n)^m}\mathcal{K}_t(z-y_1,\dots,z-y_m)
f^{\alpha_1}_1(y_1)\cdots f^{\alpha_m}_m(y_m)\,dy_1\cdots dy_m\\
&=\sum_{\alpha_1,\dots,\alpha_m\in\{2,3\}}\mathcal{G}_t(f^{\alpha_1}_1,\dots,f^{\alpha_m}_m)(z),
\end{split}
\end{equation*}
and hence
\begin{equation*}
\begin{split}
J_0&=\frac{1}{m(\mathcal{B})}\int_{\mathcal{B}}\big[S_0(f_1,\dots,f_m)(x)\big]^2dx\\
&=\frac{1}{m(\mathcal{B})}\int_{\mathcal{B}}\big[S_0(f^2_1,\dots,f^2_m)(x)\big]^2dx+
\sum_{\alpha_1,\dots,\alpha_m\in \Xi}
\frac{1}{m(\mathcal{B})}\int_{\mathcal{B}}\big[S_0(f^{\alpha_1}_1,\dots,f^{\alpha_m}_m)(x)\big]^2dx\\
&:=J^{2,\dots,2}_0+\sum_{(\alpha_1,\dots,\alpha_m)\in\Xi}J^{\alpha_1,\dots,\alpha_m}_0,
\end{split}
\end{equation*}
where we denote
\begin{equation*}
\Xi:=\Big\{(\alpha_1,\dots,\alpha_m):\alpha_j\in\{2,3\},\mbox{there is at least one}~ \alpha_j\neq2,1\leq j\leq m\Big\}.
\end{equation*}
According to Theorem \ref{gs1}, we know that the $m$-linear operator $S$ is bounded from $L^{2m}(\mathbb R^n)\times\cdots\times L^{2m}(\mathbb R^n)$ into $L^2(\mathbb R^n)$. This fact, together with part (1) of Lemma \ref{BMOp}, implies that
\begin{equation*}
\begin{split}
J^{2,\dots,2}_0&\leq\frac{1}{m(\mathcal{B})}\big\|S(f^2_1,\dots,f^2_m)\big\|_{L^2}^2
\leq\frac{C}{m(\mathcal{B})}\bigg[\prod_{i=1}^m\big\|f^2_i\big\|_{L^{2m}}\bigg]^2\\
&=\frac{C}{m(\mathcal{B})}\bigg[\prod_{i=1}^m\bigg(\int_{4\mathcal{B}}
\big|f_i(y_i)-(f_i)_{4\mathcal{B}}\big|^{2m}dy_i\bigg)^{\frac{1}{2m}}\bigg]^2\\
&\lesssim\frac{1}{m(\mathcal{B})}\bigg[\prod_{i=1}^m\big\|f_i\big\|_{\mathrm{BMO}}m(4\mathcal{B})^{\frac{1}{2m}}\bigg]^2
\lesssim\prod_{i=1}^m\big\|f_i\big\|^2_{\mathrm{BMO}},
\end{split}
\end{equation*}
as desired. As in the proof of Theorem \ref{mainthm1}, we can also show that for an arbitrary point $z$ with $|z-x|<t$ and $0<t\leq r$,
\begin{equation*}
\begin{split}
&\big|\mathcal{G}_t\big(f^{\alpha_1}_1,\dots,f^{\alpha_m}_m\big)(z)\big|\\
&\leq t^{\delta}\int_{(\mathbb R^n)^{\ell}\setminus(4\mathcal{B})^{\ell}}\frac{1}{(t+\sum_{k=1}^{\ell}|z-y_k|)^{mn+\delta}}
\Big|\big[f_1(y_1)-(f_1)_{4\mathcal{B}}\big]\cdots\big[f_{\ell}(y_{\ell})-(f_{\ell})_{4\mathcal{B}}\big]\Big|\,dy_1\cdots dy_{\ell}\\
&\times\prod_{k=\ell+1}^m\int_{4\mathcal{B}}\big|f_k(y_k)-(f_k)_{4\mathcal{B}}\big|\,dy_k,
\end{split}
\end{equation*}
when $1\leq\ell<m$ and
\begin{equation*}
\alpha_1=\cdots=\alpha_{\ell}=3\qquad \&\qquad \alpha_{\ell+1}=\cdots=\alpha_m=2.
\end{equation*}
It is easy to check that when $x\in \mathcal{B}=B(x_0,r)$, $y\in 2^{j+1}\mathcal{B}\setminus 2^j\mathcal{B}$ with $j\geq2$, $|z-x|<t$ and $0<t\leq r$,
\begin{equation}\label{factw2}
|z-y|\approx |x_0-y|.
\end{equation}
From \eqref{factw2} and \eqref{equaw1}, it then follows that
\begin{equation*}
\begin{split}
&\big|\mathcal{G}_t\big(f^{\alpha_1}_1,\dots,f^{\alpha_m}_m\big)(z)\big|\\
&\leq t^{\delta}\sum_{j=2}^{\infty}\int_{(2^{j+1}\mathcal{B})^{\ell}\setminus(2^j \mathcal{B})^{\ell}}\frac{1}{(\sum_{k=1}^{\ell}|z-y_k|)^{mn+\delta}}
\Big|\big[f_1(y_1)-(f_1)_{4\mathcal{B}}\big]\cdots\big[f_{\ell}(y_{\ell})-(f_{\ell})_{4\mathcal{B}}\big]\Big|\,dy_1\cdots dy_{\ell}\\
&\times\prod_{k=\ell+1}^m\int_{4\mathcal{B}}\big|f_k(y_k)-(f_k)_{4\mathcal{B}}\big|\,dy_k\\
&\lesssim t^{\delta}\sum_{j=2}^{\infty}\bigg\{\prod_{k=1}^{\ell}
\int_{2^{j+1}\mathcal{B}\setminus 2^j\mathcal{B}}
\frac{1}{(|x_0-y_k|)^{\frac{mn+\delta}{\ell}}}\big|f_k(y_k)-(f_k)_{4\mathcal{B}}\big|\,dy_k\bigg\}\\
&\times\prod_{k=\ell+1}^m\Big[\big\|f_k\big\|_{\mathrm{BMO}}\cdot m\big(4\mathcal{B}\big)\Big]\\
&\lesssim t^{\delta}\sum_{j=2}^{\infty}\bigg\{\prod_{k=1}^{\ell}\frac{1}{m(2^j\mathcal{B})^{\frac{mn+\delta}{\ell n}}}
\int_{2^{j+1}\mathcal{B}}\big|f_k(y_k)-(f_k)_{4\mathcal{B}}\big|\,dy_k\bigg\}
\times\prod_{k=\ell+1}^m\Big[\big\|f_k\big\|_{\mathrm{BMO}}\cdot m\big(4\mathcal{B}\big)\Big].
\end{split}
\end{equation*}
We can now argue exactly as we did in the proof of Theorem \ref{mainthm1} to get
\begin{equation*}
\big|\mathcal{G}_t\big(f^{\alpha_1}_1,\dots,f^{\alpha_m}_m\big)(z)\big|
\lesssim t^{\delta}\sum_{j=2}^{\infty}\bigg\{\frac{1}{m(2^j\mathcal{B})^{\frac{\delta}{n}}}\cdot j^{\ell}\bigg\}
\times\prod_{k=1}^m\big\|f_k\big\|_{\mathrm{BMO}}.
\end{equation*}
Therefore,
\begin{equation*}
\begin{split}
J^{\alpha_1,\dots,\alpha_m}_0
&=\frac{1}{m(\mathcal{B})}\int_{\mathcal{B}}\big[S_0(f^{\alpha_1}_1,\dots,f^{\alpha_m}_m)(x)\big]^2dx\\
&\lesssim\frac{1}{m(\mathcal{B})}\int_{\mathcal{B}}\bigg(\int_0^{r}\int_{|z-x|<t}t^{2\delta}\frac{dzdt}{t^{n+1}}\bigg)
\bigg\{\sum_{j=2}^{\infty}\frac{j^{\ell}}{m(2^j\mathcal{B})^{\frac{\delta}{n}}}\bigg\}^2
\times\prod_{k=1}^m\big\|f_k\big\|^2_{\mathrm{BMO}}\\
&\lesssim\frac{1}{m(\mathcal{B})}\int_{\mathcal{B}}\bigg(\int_0^{r}t^{2\delta-1}dt\bigg)
\bigg\{\sum_{j=1}^{\infty}\frac{j^{\ell}}{(2^j)^{\delta}r^{\delta}}\bigg\}^2
\times\prod_{k=1}^m\big\|f_k\big\|^2_{\mathrm{BMO}}\\
&\lesssim\prod_{k=1}^m\big\|f_k\big\|^2_{\mathrm{BMO}}.
\end{split}
\end{equation*}
Let us now consider the remaining case when $\alpha_1=\cdots=\alpha_m=3$. By using the same arguments as in Theorem \ref{mainthm1}, we can deduce that
\begin{equation*}
\begin{split}
&\big|\mathcal{G}_t\big(f^{3}_1,\dots,f^{3}_{m}\big)(z)\big|\\
&\lesssim\int_{(\mathbb R^n)^{m}\setminus (4\mathcal{B})^{m}}\frac{t^{\delta}}{(t+\sum_{k=1}^{m}|z-y_k|)^{mn+\delta}}\\
&\times\Big|\big[f_1(y_1)-(f_1)_{4\mathcal{B}}\big]\cdots\big[f_{m}(y_{m})-(f_{m})_{4\mathcal{B}}\big]\Big|\,dy_1\cdots dy_{m}\\
&\leq\sum_{j=2}^{\infty}\int_{(2^{j+1}\mathcal{B})^{m}\setminus(2^j \mathcal{B})^{m}}
\frac{t^{\delta}}{(t+\sum_{k=1}^m|z-y_k|)^{mn+\delta}}\\
&\times\Big|\big[f_1(y_1)-(f_1)_{4\mathcal{B}}\big]\cdots\big[f_{m}(y_{m})-(f_{m})_{4\mathcal{B}}\big]\Big|\,dy_1\cdots dy_{m}\\
&\lesssim t^{\delta}\sum_{j=2}^{\infty}\bigg\{\prod_{k=1}^{m}
\int_{2^{j+1}\mathcal{B}\setminus 2^j\mathcal{B}}
\frac{1}{(|x_0-y_k|)^{\frac{mn+\delta}{m}}}\big|f_k(y_k)-(f_k)_{4\mathcal{B}}\big|\,dy_k\bigg\}\\
&\lesssim t^{\delta}\sum_{j=2}^{\infty}\bigg\{\prod_{k=1}^{m}\frac{1}{m(2^j\mathcal{B})^{\frac{mn+\delta}{m n}}}
\int_{2^{j+1}\mathcal{B}}\big|f_k(y_k)-(f_k)_{4\mathcal{B}}\big|\,dy_k\bigg\},
\end{split}
\end{equation*}
where in the last inequality we have used \eqref{factw2} and \eqref{equaw2}. Hence, as in the proof of Theorem \ref{mainthm1}, we can also prove the following result.
\begin{equation*}
\big|\mathcal{G}_t\big(f^{3}_1,\dots,f^{3}_m\big)(z)\big|
\lesssim t^{\delta}\sum_{j=2}^{\infty}\bigg\{\frac{1}{m(2^j\mathcal{B})^{\frac{\delta}{n}}}\cdot j^{m}\bigg\}
\times\prod_{k=1}^m\big\|f_k\big\|_{\mathrm{BMO}}.
\end{equation*}
Therefore, we have
\begin{equation*}
\begin{split}
J^{3,\dots,3}_0
&=\frac{1}{m(\mathcal{B})}\int_{\mathcal{B}}\big[S_0(f^{3}_1,\dots,f^{3}_m)(x)\big]^2dx\\
&\lesssim\frac{1}{m(\mathcal{B})}\int_{\mathcal{B}}\bigg(\int_0^{r}\int_{|z-x|<t}t^{2\delta}\frac{dzdt}{t^{n+1}}\bigg)
\bigg\{\sum_{j=1}^{\infty}\frac{j^{m}}{m(2^j\mathcal{B})^{\frac{\delta}{n}}}\bigg\}^2
\times\prod_{k=1}^m\big\|f_k\big\|^2_{\mathrm{BMO}}\\
&\lesssim\frac{1}{m(\mathcal{B})}\int_{\mathcal{B}}\bigg(\int_0^{r}t^{2\delta-1}dt\bigg)
\bigg\{\sum_{j=1}^{\infty}\frac{j^{m}}{(2^j)^{\delta}r^{\delta}}\bigg\}^2
\times\prod_{k=1}^m\big\|f_k\big\|^2_{\mathrm{BMO}}\\
&\lesssim\prod_{k=1}^m\big\|f_k\big\|^2_{\mathrm{BMO}}.
\end{split}
\end{equation*}
Summing up the above estimates, we conclude that
\begin{equation*}
J_0\lesssim\prod_{i=1}^m\big\|f_i\big\|^2_{\mathrm{BMO}}.
\end{equation*}
Let us now turn to deal with the other term $J_{\infty}$. We first claim that for any $x\in\mathcal{B}=B(x_0,r)$ and $z\in\mathbb R^n$ satisfying $|z|<t$ and $t>r$,
\begin{equation}\label{keyestiw5}
\big|\mathcal{G}_t(\vec{f})(x+z)\big|\lesssim\prod_{i=1}^m\big\|f_i\big\|_{\mathrm{BMO}}.
\end{equation}
In fact, one can easily check that the same proof of $I_{\infty}$ above goes along in this more general situation (some easy modifications). We shall repeat the argument here for completeness. Notice that $t>r$, then there exists a nonnegative integer $k\in \mathbb{N}\cup\{0\}$ such that $2^{k}r<t\leq 2^{k+1}r$. By the vanishing condition and size condition of the kernel $\mathcal{K}$ and \eqref{A83}, we have
\begin{equation}\label{firstsecondS1}
\begin{split}
&\big|\mathcal{G}_t(\vec{f})(x+z)\big|\\
&=\bigg|\int_{(\mathbb R^n)^m}\mathcal{K}_t(x+z-y_1,\dots,x+z-y_m)
\bigg(\prod_{i=1}^m\big[f_i(y_i)-(f_i)_{2^{k+2}\mathcal{B}}\big]\bigg)\,dy_1\cdots dy_m\bigg|\\
&\lesssim\int_{(2^{k+2}\mathcal{B})^m}\frac{1}{t^{mn}}\prod_{i=1}^m\big|f_i(y_i)-(f_i)_{2^{k+2}\mathcal{B}}\big|\,dy_i\\
&+\int_{(\mathbb R^n)^m\setminus(2^{k+2}\mathcal{B})^m}
\frac{t^{\delta}}{(t+\sum_{i=1}^m|x+z-y_i|)^{mn+\delta}}
\prod_{i=1}^m\big|f_i(y_i)-(f_i)_{2^{k+2}\mathcal{B}}\big|\,dy_i.
\end{split}
\end{equation}
Clearly, the first term in \eqref{firstsecondS1} is dominated by
\begin{equation*}
\begin{split}
&\prod_{i=1}^m\frac{1}{(2^{k}r)^n}\int_{2^{k+2}\mathcal{B}}\big|f_i(y_i)-(f_i)_{2^{k+2}\mathcal{B}}\big|\,dy_i\\
&\lesssim\prod_{i=1}^m\frac{1}{m(2^{k+2}\mathcal{B})}\int_{2^{k+2}\mathcal{B}}\big|f_i(y_i)-(f_i)_{2^{k+2}\mathcal{B}}\big|\,dy_i
\leq\prod_{i=1}^m\big\|f_i\big\|_{\mathrm{BMO}}.
\end{split}
\end{equation*}
Observe that when $x\in \mathcal{B}$, $|z|<t$ and $(y_1,\dots,y_m)\in (2^{j+1}\mathcal{B})^{m}\setminus(2^j\mathcal{B})^{m}$ with $j\geq k+2$ and $k\in \mathbb{N}\cup\{0\}$, one has
\begin{equation*}
t+\sum_{i=1}^m|x+z-y_i|\approx t+\sum_{i=1}^m|x-y_i|,
\end{equation*}
and
\begin{equation*}
\sum_{i=1}^m|x-y_i|\geq\max_{1\leq k\leq m}|x-y_k|\geq 2^{j}r\cong m(2^j \mathcal{B})^{1/n}.
\end{equation*}
Hence, the second term in \eqref{firstsecondS1} is bounded by
\begin{equation*}
\begin{split}
&\sum_{j=k+2}^{\infty}\big(2^{k+1}r\big)^{\delta}\int_{(2^{j+1}\mathcal{B})^{m}\setminus(2^j \mathcal{B})^{m}}
\frac{1}{(\sum_{i=1}^m|x-y_i|)^{mn+\delta}}\prod_{i=1}^m\big|f_i(y_i)-(f_i)_{2^{k+2}\mathcal{B}}\big|\,dy_i\\
&\lesssim\sum_{j=k+2}^{\infty}\big(2^{k+1}r\big)^{\delta}
\bigg\{\prod_{i=1}^m\frac{1}{m(2^{j+1}\mathcal{B})^{\frac{mn+\delta}{mn}}}
\int_{2^{j+1} \mathcal{B}}\big|f_i(y_i)-(f_i)_{2^{k+2}\mathcal{B}}\big|\,dy_i\bigg\}.
\end{split}
\end{equation*}
Moreover, in view of part (2) of Lemma \ref{BMOp}, the above expression is further bounded by
\begin{equation*}
\begin{split}
&\sum_{j=k+2}^{\infty}\frac{(2^{k+1}r)^{\delta}}{(2^{j+1}r)^{\delta}}
\bigg\{\prod_{i=1}^m\frac{1}{m(2^{j+1}\mathcal{B})}
\int_{2^{j+1} \mathcal{B}}\big|f_i(y_i)-(f_i)_{2^{k+2}\mathcal{B}}\big|\,dy_i\bigg\}\\
&\lesssim\sum_{j=k+2}^{\infty}\frac{1}{(2^{j-k})^{\delta}}\bigg\{\prod_{i=1}^m(j-k)\cdot\big\|f_i\big\|_{\mathrm{BMO}}\bigg\}\\
&=\sum_{j=2}^{\infty}\frac{j^m}{2^{j\delta}}\cdot\prod_{i=1}^m\big\|f_i\big\|_{\mathrm{BMO}}
\lesssim\prod_{i=1}^m\big\|f_i\big\|_{\mathrm{BMO}}.
\end{split}
\end{equation*}
Combining the above estimates for both terms in \eqref{firstsecondS1} implies our desired estimate \eqref{keyestiw5}. Consequently, by using the triangle inequality and \eqref{keyestiw5}, we can see that for any $x,y\in \mathcal{B}=B(x_0,r)$,
\begin{equation*}
\begin{split}
&\Big|\big[S_{\infty}(\vec{f})(x)\big]^2-\big[S_{\infty}(\vec{f})(y)\big]^2\Big|\\
&=\bigg|\int_{r}^{\infty}\int_{|z|<t}\big|\mathcal{G}_t(\vec{f})(x+z)\big|^2-\big|\mathcal{G}_t(\vec{f})(y+z)\big|^2\frac{dzdt}{t^{n+1}}\bigg|\\
&\leq\int_{r}^{\infty}\int_{|z|<t}\Big[\big|\mathcal{G}_t(\vec{f})(x+z)\big|+\big|\mathcal{G}_t(\vec{f})(y+z)\big|\Big]
\cdot\Big|\mathcal{G}_t(\vec{f})(x+z)-\mathcal{G}_t(\vec{f})(y+z)\Big|\frac{dzdt}{t^{n+1}}\\
&\lesssim\prod_{i=1}^m\big\|f_i\big\|_{\mathrm{BMO}}
\times\int_{r}^{\infty}\int_{|z|<t}\Big|\mathcal{G}_t(\vec{f})(x+z)-\mathcal{G}_t(\vec{f})(y+z)\Big|\frac{dzdt}{t^{n+1}}.
\end{split}
\end{equation*}
On the other hand, by the smoothness condition of the kernel $\mathcal{K}$, we can see that for any $x,y\in \mathcal{B}$, $|z|<t$ and $(y_1,\dots,y_m)\in (\mathbb R^n)^m\setminus(4\mathcal{B})^m$,
\begin{align}\label{regularity3}
&\Big|\mathcal{K}_t(x+z-y_1,\dots,x+z-y_m)-\mathcal{K}_t(y+z-y_1,\dots,y+z-y_m)\Big|\notag\\
&=\frac{1}{t^{mn}}\bigg|\mathcal{K}\Big(\frac{x+z-y_1}{t},\dots,\frac{x+z-y_m}{t}\Big)-
\mathcal{K}\Big(\frac{y+z-y_1}{t},\dots,\frac{y+z-y_m}{t}\Big)\bigg|\notag\\
&\lesssim\frac{t^{\delta}\cdot|x-y|^{\gamma}}{(t+\sum_{i=1}^m|x+z-y_i|)^{mn+\delta+\gamma}}.
\end{align}
Thus, by \eqref{regularity3} and the vanishing condition of the kernel $\mathcal{K}$, we obtain that for any $x,y\in \mathcal{B}=B(x_0,r)$,
\begin{equation}\label{firstsecond2S2}
\begin{split}
&\Big|\mathcal{G}_t(\vec{f})(x+z)-\mathcal{G}_t(\vec{f})(y+z)\Big|\\
=&\bigg|\int_{(\mathbb R^n)^m}\Big[\mathcal{K}_t(x+z-y_1,\dots,x+z-y_m)-\mathcal{K}_t(y+z-y_1,\dots,y+z-y_m)\Big]\\
&\times\bigg(\prod_{i=1}^m\big[f_i(y_i)-(f_i)_{4\mathcal{B}}\big]\bigg)\,dy_1\cdots dy_m\bigg|\\
\lesssim&\int_{(4\mathcal{B})^m}\Big[\big|\mathcal{K}_t(x+z-y_1,\dots,x+z-y_m)\big|+\big|\mathcal{K}_t(y+z-y_1,\dots,y+z-y_m)\big|\Big]\\
&\times\prod_{i=1}^m\big|f_i(y_i)-(f_i)_{4\mathcal{B}}\big|\,dy_i\\
+&\int_{(\mathbb R^n)^m\setminus(4\mathcal{B})^m}
\frac{t^{\delta}\cdot|x-y|^{\gamma}}{(t+\sum_{i=1}^m|x+z-y_i|)^{mn+\delta+\gamma}}
\prod_{i=1}^m\big|f_i(y_i)-(f_i)_{4\mathcal{B}}\big|\,dy_i.
\end{split}
\end{equation}
By using \eqref{A83}, the first term in \eqref{firstsecond2S2} is naturally controlled by
\begin{equation*}
\begin{split}
\int_{(4\mathcal{B})^m}\frac{1}{t^{mn}}\prod_{i=1}^m\big|f_i(y_i)-(f_i)_{4\mathcal{B}}\big|\,dy_i.
\end{split}
\end{equation*}
Observe that for any $|z|<t$ with $t>r$, $x\in\mathcal{B}$ and $(y_1,\dots,y_m)\in (\mathbb R^n)^m\setminus(4\mathcal{B})^m$,
\begin{equation*}
t+\sum_{i=1}^m|x+z-y_i|\approx t+\sum_{i=1}^m|x-y_i|.
\end{equation*}
Then the second term in \eqref{firstsecond2S2} is bounded by
\begin{equation*}
t^{\delta}\cdot\int_{(\mathbb R^n)^m\setminus(4\mathcal{B})^m}
\frac{(2r)^{\gamma}}{(t+\sum_{i=1}^m|x-y_i|)^{mn+\delta+\gamma}}
\prod_{i=1}^m\big|f_i(y_i)-(f_i)_{4\mathcal{B}}\big|\,dy_i.
\end{equation*}
Interchanging the order of integration in the following calculation, we have
\begin{equation*}
\begin{split}
&\int_{r}^{\infty}\int_{|z|<t}\Big|\mathcal{G}_t(\vec{f})(x+z)-\mathcal{G}_t(\vec{f})(y+z)\Big|\frac{dzdt}{t^{n+1}}\\
&\lesssim\int_{(4\mathcal{B})^m}\prod_{i=1}^m\big|f_i(y_i)-(f_i)_{4\mathcal{B}}\big|\,dy_i
\bigg(\int_{r}^{\infty}\int_{|z|<t}\frac{1}{t^{mn+n+1}}dzdt\bigg)\\
&+\int_{(\mathbb R^n)^m\setminus(4\mathcal{B})^m}(2r)^{\gamma}\prod_{i=1}^m\big|f_i(y_i)-(f_i)_{4\mathcal{B}}\big|\,dy_i
\bigg(\int_{r}^{\infty}\int_{|z|<t}\frac{t^{\delta}}{(t+\sum_{i=1}^m|x-y_i|)^{mn+\delta+\gamma}}\frac{dzdt}{t^{n+1}}\bigg)\\
&\lesssim\int_{(4\mathcal{B})^m}\prod_{i=1}^m\big|f_i(y_i)-(f_i)_{4\mathcal{B}}\big|\,dy_i
\bigg(\int_{r}^{\infty}\frac{1}{t^{mn+1}}dt\bigg)\\
&+\int_{(\mathbb R^n)^m\setminus(4\mathcal{B})^m}(2r)^{\gamma}\prod_{i=1}^m\big|f_i(y_i)-(f_i)_{4\mathcal{B}}\big|\,dy_i
\bigg(\int_{r}^{\infty}\frac{t^{\delta-1}}{(t+\sum_{i=1}^m|x-y_i|)^{mn+\delta+\gamma}}dt\bigg)\\
&\lesssim\prod_{i=1}^m\frac{1}{m(4\mathcal{B})}\int_{4\mathcal{B}}\big|f_i(y_i)-(f_i)_{4\mathcal{B}}\big|\,dy_i\\
&+\sum_{j=2}^{\infty}\int_{(2^{j+1}\mathcal{B})^{m}\setminus(2^j \mathcal{B})^{m}}
(2r)^{\gamma}\prod_{i=1}^m\big|f_i(y_i)-(f_i)_{4\mathcal{B}}\big|\,dy_i
\bigg(\int_{r}^{\infty}\frac{t^{\delta-1}}{(t+\sum_{i=1}^m|x-y_i|)^{mn+\delta+\gamma}}dt\bigg).
\end{split}
\end{equation*}
We now proceed exactly as in Theorem \ref{mainthm1}, and obtain
\begin{equation*}
\begin{split}
&\int_{r}^{\infty}\int_{|z|<t}\Big|\mathcal{G}_t(\vec{f})(x+z)-\mathcal{G}_t(\vec{f})(y+z)\Big|\frac{dzdt}{t^{n+1}}\\
&\lesssim\prod_{i=1}^m\big\|f_i\big\|_{\mathrm{BMO}}
+\sum_{j=2}^{\infty}\frac{j^m}{2^{(j-1)\gamma}}\cdot\prod_{i=1}^m\big\|f_i\big\|_{\mathrm{BMO}}
\lesssim\prod_{i=1}^m\big\|f_i\big\|_{\mathrm{BMO}}.
\end{split}
\end{equation*}
Furthermore, it follows from the previous estimates that
\begin{equation*}
\begin{split}
\Big|\big[S_{\infty}(\vec{f})(x)\big]^2-\big[S_{\infty}(\vec{f})(y)\big]^2\Big|
\lesssim &\bigg[\prod_{i=1}^m\big\|f_i\big\|_{\mathrm{BMO}}\bigg]
\times\bigg[\prod_{i=1}^m\big\|f_i\big\|_{\mathrm{BMO}}\bigg]\\
\lesssim &\prod_{i=1}^m\big\|f_i\big\|^2_{\mathrm{BMO}},
\end{split}
\end{equation*}
where the last inequality follows from Cauchy's inequality. Hence,
\begin{equation*}
\begin{split}
J_{\infty}&=\frac{1}{|\mathcal{B}|}\int_{\mathcal{B}}
\underset{y\in\mathcal{B}}{\mathrm{ess\,sup}}\Big|\big[S_{\infty}(\vec{f})(x)\big]^2-\big[S_{\infty}(\vec{f})(y)\big]^2\Big|\,dx\\
&\lesssim\prod_{i=1}^m\big\|f_i\big\|^2_{\mathrm{BMO}}.
\end{split}
\end{equation*}
Combining the above estimates for both terms $J_0$ and $J_{\infty}$ yields the desired result \eqref{mainesti2}. This concludes the proof of Theorem \ref{mainthm2}.
\end{proof}

Let $\delta$ and $\gamma$ be the same as in Definition \ref{defin12}. By using the same procedure as in the proofs of Theorems \ref{mainthm1} and \ref{mainthm2}, and invoking Theorem \ref{gs1ambda}, we are able to show that if $g^{\ast}_{\lambda}(\vec{f})(x_0)<+\infty$ for a single point $x_0\in\mathbb R^n$ and $\vec{f}\in[\mathrm{BMO}(\mathbb R^n)]^{m}$, then $g^{\ast}_{\lambda}(\vec{f})(x)$ is finite almost everywhere in $\mathbb R^n$. Moreover, the multilinear Littlewood--Paley $g^{\ast}_{\lambda}$-function is bounded from $\mathrm{BMO}(\mathbb R^n)\times\cdots\times \mathrm{BMO}(\mathbb R^n)$ into $\mathrm{BLO}(\mathbb R^n)$.

\begin{thm}\label{mainthm3}
Suppose that $\lambda>3m+{(2\delta+2\gamma)}/n$ with $2\leq m\in \mathbb{N}$ and $\gamma,\delta>0$. For any $\vec{f}=(f_1,f_2,\dots,f_m)\in [\mathrm{BMO}(\mathbb R^n)]^{m}$, then $g^{\ast}_{\lambda}(\vec{f})(x)$ is either infinite everywhere or finite almost everywhere, and in the latter case, we have
\begin{equation*}
\big\|\big[g^{\ast}_{\lambda}(\vec{f})\big]^2\big\|_{\mathrm{BLO}}
\lesssim\prod_{i=1}^m\big\|f_i\big\|^2_{\mathrm{BMO}}.
\end{equation*}
\end{thm}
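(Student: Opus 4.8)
The plan is to run, essentially verbatim, the two-part argument used for Theorems~\ref{mainthm1} and~\ref{mainthm2}, the only new feature being that the Poisson weight $\big(t/(t+|x-z|)\big)^{\lambda n}$ must be carried along throughout. By the definition of $\mathrm{BLO}(\mathbb R^n)$ together with \eqref{essinf}, it suffices to fix a ball $\mathcal{B}=B(x_0,r)$ and show
\begin{equation*}
\frac{1}{m(\mathcal{B})}\int_{\mathcal{B}}\Big[\big[g^{\ast}_{\lambda}(\vec f)(x)\big]^2-\underset{y\in\mathcal{B}}{\mathrm{ess\,inf}}\,\big[g^{\ast}_{\lambda}(\vec f)(y)\big]^2\Big]\,dx\lesssim\prod_{i=1}^m\big\|f_i\big\|_{\mathrm{BMO}}^2 .
\end{equation*}
Splitting the defining integral at $t=r$ gives $\big[g^{\ast}_{\lambda}(\vec f)(x)\big]^2=\big[g^{\ast}_{\lambda,0}(\vec f)(x)\big]^2+\big[g^{\ast}_{\lambda,\infty}(\vec f)(x)\big]^2$, where $g^{\ast}_{\lambda,0}$ collects the part with $0<t\le r$ and $g^{\ast}_{\lambda,\infty}$ the part with $t>r$, and then, exactly as before, the left side above is dominated by $I_0+I_{\infty}$, with $I_0$ the mean of $\big[g^{\ast}_{\lambda,0}(\vec f)\big]^2$ over $\mathcal{B}$ and $I_{\infty}$ the mean over $\mathcal{B}$ of $\underset{y\in\mathcal{B}}{\mathrm{ess\,sup}}\,\big|\big[g^{\ast}_{\lambda,\infty}(\vec f)(x)\big]^2-\big[g^{\ast}_{\lambda,\infty}(\vec f)(y)\big]^2\big|$. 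The a.e.\ dichotomy (infinite everywhere vs.\ finite a.e.) is obtained from the one-point finiteness argument already invoked for $g$ and $S$.

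For $I_0$ I would decompose each $f_i=f_i^1+f_i^2+f_i^3$ relative to $4\mathcal{B}$ as in the proof of Theorem~\ref{mainthm2}, expand via \eqref{sum123}, and drop every term containing some $f_i^1$ by the vanishing condition on $\mathcal{K}$. The all-$f^2$ term is controlled by the boundedness $g^{\ast}_{\lambda}\colon L^{2m}\times\cdots\times L^{2m}\to L^2$ furnished by Theorem~\ref{gs1ambda} — applicable because the standing hypothesis $\lambda>3m+(2\delta+2\gamma)/n$ implies in particular $\lambda>2m$ and $\gamma<n(\lambda-2m)/2$ — together with part~(1) of Lemma~\ref{BMOp}, exactly as for $J^{2,\dots,2}_0$. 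For the terms with at least one $\alpha_i=3$ I would use the size condition on $\mathcal{K}$ and split the inner $z$-integral into the region $|z-x_0|\le 4r$, where $\big(t/(t+|x-z|)\big)^{\lambda n}\le 1$ and the computations copy those for $J^{\alpha_1,\dots,\alpha_m}_0$ and $J^{3,\dots,3}_0$ verbatim, and the dyadic shells $2^{j+1}\mathcal{B}\setminus 2^j\mathcal{B}$ with $j\ge 2$, on which the weight contributes a gain $\approx(t/2^jr)^{\lambda n}$ while, by the size condition and part~(2) of Lemma~\ref{BMOp}, $\big|\mathcal{G}_t(f_1^{\alpha_1},\dots,f_m^{\alpha_m})(z)\big|$ is bounded by $t^{\delta}$ times a fixed power of the shell indices times $\prod_i\|f_i\|_{\mathrm{BMO}}$; integrating in $t\in(0,r]$ and summing the resulting geometric series in the two shell indices then yields $I_0\lesssim\prod_{i=1}^m\|f_i\|_{\mathrm{BMO}}^2$.

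For $I_{\infty}$ the crucial step is the change of variables $z\mapsto x+z$, after which the weight becomes $\big(t/(t+|z|)\big)^{\lambda n}$, independent of the base point; hence the $x$-dependence of $\big[g^{\ast}_{\lambda,\infty}(\vec f)(x)\big]^2$ sits entirely in $\mathcal{G}_t(\vec f)(x+z)$ and
\begin{equation*}
\Big|\big[g^{\ast}_{\lambda,\infty}(\vec f)(x)\big]^2-\big[g^{\ast}_{\lambda,\infty}(\vec f)(y)\big]^2\Big|\le\int_{r}^{\infty}\!\!\int_{\mathbb R^n}\Big(\tfrac{t}{t+|z|}\Big)^{\lambda n}\Big[\big|\mathcal{G}_t(\vec f)(x+z)\big|+\big|\mathcal{G}_t(\vec f)(y+z)\big|\Big]\big|\mathcal{G}_t(\vec f)(x+z)-\mathcal{G}_t(\vec f)(y+z)\big|\,\tfrac{dzdt}{t^{n+1}}.
\end{equation*}
I would then establish, for $t>r$, $x,y\in\mathcal{B}$ and all $z\in\mathbb R^n$, a size estimate $\big|\mathcal{G}_t(\vec f)(x+z)\big|\lesssim\big(1+|z|/t\big)^{mn}\prod_i\|f_i\|_{\mathrm{BMO}}$ — obtained from the vanishing and size conditions on $\mathcal{K}$ by subtracting from each $f_i$ its mean over a dilate of $\mathcal{B}$ of radius comparable to $t+|z|$, and reducing for $|z|<t$ to the bound \eqref{keyestiw5} used for $S$ — and a regularity estimate $\big|\mathcal{G}_t(\vec f)(x+z)-\mathcal{G}_t(\vec f)(y+z)\big|\lesssim (r/t)^{\gamma}\big(1+|z|/t\big)^{mn+\delta+\gamma}\prod_i\|f_i\|_{\mathrm{BMO}}$ — obtained from the smoothness condition on $\mathcal{K}$ in the spirit of \eqref{firstsecond2S2}. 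Inserting these into the last display bounds the integrand by a constant times $(r/t)^{\gamma}\big(t/(t+|z|)\big)^{\lambda n}\big(1+|z|/t\big)^{\Theta}t^{-n-1}\prod_i\|f_i\|_{\mathrm{BMO}}^2$ for an exponent $\Theta$ equal to a fixed multiple of $mn$ up to a correction controlled by $\delta+\gamma$; the lower bound on $\lambda$ is more than enough to ensure $\lambda n>\Theta+n$, so the substitution $z=tu$ reduces the $z$-integral to $O(t^n)$ and $\int_r^{\infty}(r/t)^{\gamma}t^{n}\,t^{-n-1}\,dt<\infty$, whence $I_{\infty}\lesssim\prod_i\|f_i\|_{\mathrm{BMO}}^2$ after Cauchy's inequality. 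Combining the bounds for $I_0$ and $I_{\infty}$ completes the proof.

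The hard part will be the bookkeeping for $I_0$: unlike the area integral $S$, whose aperture cone automatically localizes $z$, here $z$ ranges over all of $\mathbb R^n$ even when $0<t\le r$, so one must transport the weight $\big(t/(t+|x-z|)\big)^{\lambda n}$ through a double dyadic decomposition — over the shells containing the points $y_i$ and over the shells containing $z$ — and verify that the resulting double series converges. It is the interplay of this summation with the merely $t^{\delta}$-order decay of the kernel and the $t^{\gamma}$-order gain from its smoothness that pins down the threshold $\lambda>3m+(2\delta+2\gamma)/n$; the condition $\lambda>2m$ needed for Theorem~\ref{gs1ambda} alone would not suffice. Apart from this, every step is a routine transcription of the arguments already carried out for Theorems~\ref{mainthm1} and~\ref{mainthm2}.
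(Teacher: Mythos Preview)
Your proposal is correct and matches the paper's approach. The paper does not give a detailed proof of Theorem~\ref{mainthm3}: it simply asserts that the result follows ``by using the same procedure as in the proofs of Theorems~\ref{mainthm1} and~\ref{mainthm2}, and invoking Theorem~\ref{gs1ambda},'' which is precisely the route you outline --- split at $t=r$, decompose each $f_i$ relative to a dilate of $\mathcal{B}$, handle the all-$f^2$ piece via the $L^{2m}\times\cdots\times L^{2m}\to L^2$ bound, and treat the remaining pieces with the size and smoothness conditions on $\mathcal{K}$. Your additional remarks on the change of variables $z\mapsto x+z$ for $I_\infty$, the shape of the size/regularity estimates for $\mathcal{G}_t(\vec f)(x+z)$, and the need for a double dyadic decomposition (in $z$-shells and in $y_i$-shells) for the $I_0$ term go beyond what the paper records, but they are the natural ingredients one must supply to make the paper's one-line reduction into an actual proof, and your identification of this bookkeeping as the place where the threshold $\lambda>3m+(2\delta+2\gamma)/n$ is forced is on target.
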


As a direct consequence of \eqref{BLOsquare} and Theorem \ref{mainthm3}, we have
\begin{cor}
Suppose that $\lambda>3m+{(2\delta+2\gamma)}/n$ with $2\leq m\in \mathbb{N}$ and $\delta,\gamma>0$. For any $\vec{f}=(f_1,f_2,\dots,f_m)\in [\mathrm{BMO}(\mathbb R^n)]^{m}$, then $g^{\ast}_{\lambda}(\vec{f})(x)$ is either infinite everywhere or finite almost everywhere, and in the latter case, we have
\begin{equation*}
\big\|g^{\ast}_{\lambda}(\vec{f})\big\|_{\mathrm{BLO}}\lesssim\prod_{i=1}^m\big\|f_i\big\|_{\mathrm{BMO}}.
\end{equation*}
\end{cor}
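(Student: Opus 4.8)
The plan is to run the proofs of Theorems \ref{mainthm1} and \ref{mainthm2} with the extra factor $\big(t/(t+|x-z|)\big)^{\lambda n}$ carried through; it serves both to confine the base variable $z$ of $\mathcal{G}_t$ to a neighbourhood of the ball (up to tails that decay geometrically in the dyadic scale) and to supply the summability responsible for the threshold $\lambda>3m+(2\delta+2\gamma)/n$. Since the smoothness condition of Definition \ref{defin12} with exponent $\gamma$ self-improves to any smaller exponent (if $2|y_i-y_i'|\le\max_j|y_j|$ then $|y_i-y_i'|\le\tfrac12(1+\sum_j|y_j|)$), we may and do assume $\gamma\le\delta$, so that the hypotheses of Theorem \ref{gs1ambda} follow from $\lambda>3m+(2\delta+2\gamma)/n$. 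Fix $\mathcal{B}=B(x_0,r)$. As in Theorems \ref{mainthm1}--\ref{mainthm2}, by the definition of $\mathrm{BLO}(\mathbb R^n)$ and \eqref{essinf} it suffices to split $[g^{\ast}_{\lambda}(\vec{f})(x)]^2=[g^{\ast}_{\lambda,0}(\vec{f})(x)]^2+[g^{\ast}_{\lambda,\infty}(\vec{f})(x)]^2$ by restricting the $t$-integration to $(0,r]$ and to $(r,\infty)$ respectively, to drop the infimum of the first piece, to apply \eqref{essinf} to the second, and to bound $I_0:=\frac1{m(\mathcal{B})}\int_{\mathcal{B}}[g^{\ast}_{\lambda,0}(\vec{f})(x)]^2\,dx$ and $I_{\infty}:=\frac1{m(\mathcal{B})}\int_{\mathcal{B}}\underset{y\in\mathcal{B}}{\mathrm{ess\,sup}}\,\big|[g^{\ast}_{\lambda,\infty}(\vec{f})(x)]^2-[g^{\ast}_{\lambda,\infty}(\vec{f})(y)]^2\big|\,dx$ by $\prod_{i=1}^m\|f_i\|_{\mathrm{BMO}}^2$.

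For $I_0$ I would decompose each $f_i=f_i^1+f_i^2+f_i^3$ relative to a dilate of $\mathcal{B}$ (say $8\mathcal{B}$, a little larger than in Theorem \ref{mainthm2} because the base point of $\mathcal{G}_t$ is now unrestricted), as in \eqref{sum123}. Since $f_i^1$ is constant, the vanishing condition of $\mathcal{K}$, applied in the $i$-th slot after the substitution $u_i=(z-y_i)/t$, kills $\mathcal{G}_t(f_1^{\alpha_1},\dots,f_m^{\alpha_m})(z)$ for every $z$ as soon as some $\alpha_j=1$, so only $\alpha\in\{2,3\}^m$ contributes. For $\alpha=(2,\dots,2)$ one uses $g^{\ast}_{\lambda,0}\le g^{\ast}_{\lambda}$, the $L^{2m}\times\cdots\times L^{2m}\to L^2$ boundedness of $g^{\ast}_{\lambda}$ from Theorem \ref{gs1ambda}, and part (1) of Lemma \ref{BMOp}, exactly as for $I_0^{2,\dots,2}$ in Theorem \ref{mainthm1}. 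For a mixed $\alpha$ (say $\alpha_1=\cdots=\alpha_\ell=3$ and $\alpha_{\ell+1}=\cdots=\alpha_m=2$, $1\le\ell\le m$) I would run a triple dyadic decomposition: the scale $t\in(0,r]$; the base point $z$, split as $z\in 2\mathcal{B}$ or $z\in 2^{l+1}\mathcal{B}\setminus2^l\mathcal{B}$ ($l\ge1$); and the arguments of the $f_i^3$'s, split into annuli about $x_0$ as in \eqref{equaw1}. On $z\in2\mathcal{B}$ the elementary bound $\int_{\mathbb R^n}\big(t/(t+|x-z|)\big)^{\lambda n}\,dz\lesssim t^n$ (valid for $\lambda>1$) reduces matters to the pointwise estimate for $|\mathcal{G}_t(f^{\alpha})(z)|$ obtained as in Theorem \ref{mainthm2} (via \eqref{factw2}, \eqref{equaw1} and Lemma \ref{BMOp}(2)); squaring, integrating $\int_0^r t^{2\delta-1}\,dt\approx r^{2\delta}$ and summing in the annulus index gives the desired bound. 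On $z\in2^{l+1}\mathcal{B}\setminus2^l\mathcal{B}$ the weight is $\lesssim\big(t/(2^lr)\big)^{\lambda n}$, and the size condition of $\mathcal{K}$ must now be combined with the (possibly small) distances $|z-y_i|$, so one separates the $y_i$ within distance $\sim t$ of $z$ (where $|\mathcal{K}_t|\lesssim t^{-mn}$ and the $\mathrm{BMO}$ average over a ball of radius $\sim t$ about $z$ costs a factor $\sim(l+\log_2(r/t))$ per function, by Lemma \ref{BMOp}(2)) from those far from $z$. Performing the $t$-, $z$- and annulus-integrations one is left with a multiple series whose terms carry polynomial numerators in $j$, $l$ and $\log(r/t)$ and geometric denominators built from $2^{-j\delta}$, $2^{-l\lambda n}$ and $2^{ln}$, which converges under our hypothesis on $\lambda$. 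Summing over the finitely many $\alpha$ yields $I_0\lesssim\prod_i\|f_i\|_{\mathrm{BMO}}^2$.

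For $I_{\infty}$ I would keep the weight exact and imitate the treatment of $S_\infty$ in Theorem \ref{mainthm2}. First record the uniform bound $|\mathcal{G}_t(\vec{f})(w)|\lesssim\prod_i\|f_i\|_{\mathrm{BMO}}$ for all $w\in\mathbb R^n$ and $t>0$ (this is \eqref{keyestiw1}/\eqref{keyestiw5} with the scale restriction deleted: subtract $(f_i)_{B(w,t)}$, use the vanishing and size conditions of $\mathcal{K}$, decompose dyadically about $w$, apply Lemma \ref{BMOp}(2)). The substitution $z\mapsto z+x$ gives $[g^{\ast}_{\lambda,\infty}(\vec{f})(x)]^2=\iint_{t>r}\int_{\mathbb R^n}\big(t/(t+|z|)\big)^{\lambda n}|\mathcal{G}_t(\vec{f})(x+z)|^2\,dz\,dt/t^{n+1}$, so the weight cancels in the difference, and, using $\big||a|^2-|b|^2\big|\le(|a|+|b|)\,|a-b|$ and the uniform bound, $\big|[g^{\ast}_{\lambda,\infty}(\vec{f})(x)]^2-[g^{\ast}_{\lambda,\infty}(\vec{f})(y)]^2\big|\lesssim\big(\prod_i\|f_i\|_{\mathrm{BMO}}\big)\iint_{t>r}\int_{\mathbb R^n}\big(t/(t+|z|)\big)^{\lambda n}\big|\mathcal{G}_t(\vec{f})(x+z)-\mathcal{G}_t(\vec{f})(y+z)\big|\,dz\,dt/t^{n+1}$. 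The inner difference is estimated as in \eqref{regularity3}--\eqref{firstsecond2S2}: split the $y_i$-domain into a dilate of $\mathcal{B}$ (use \eqref{A83}) and its complement (use the smoothness condition of $\mathcal{K}$, producing $|x-y|^\gamma\le(2r)^\gamma$), and split the $z$-integral into $|z|\le t$ (where the weight is $\lesssim1$ and one is back in the situation of Theorem \ref{mainthm2}) and dyadic shells $|z|\sim2^lt$, $l\ge1$ (where the weight is $\lesssim2^{-l\lambda n}$, the shell has measure $\approx(2^lt)^n$, and the relevant $\mathrm{BMO}$ averages cost polynomial-in-$l$ factors by Lemma \ref{BMOp}(2)). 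Carrying out the $t$-, $z$- and shell-summations — again convergent exactly under $\lambda>3m+(2\delta+2\gamma)/n$ — one obtains $\big|[g^{\ast}_{\lambda,\infty}(\vec{f})(x)]^2-[g^{\ast}_{\lambda,\infty}(\vec{f})(y)]^2\big|\lesssim\prod_i\|f_i\|_{\mathrm{BMO}}^2$ (Cauchy's inequality), whence $I_\infty\lesssim\prod_i\|f_i\|_{\mathrm{BMO}}^2$. Combining with $I_0$ gives the claimed $\mathrm{BLO}$ estimate, and the dichotomy ``infinite everywhere or finite almost everywhere'' (one-point finiteness forces a.e.\ finiteness) follows as recalled at the beginning of Section~4, exactly as in \cite{he}.

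The step I expect to be the genuine obstacle is the bookkeeping of the tail terms just described: the scale $t$, the base/cone variable $z$ ranging over all of $\mathbb R^n$, and the arguments $y_i$ of the $f_i^3$'s ranging over the complement of a dilate of $\mathcal{B}$ must be dyadically decomposed all at once, and one must check that the resulting multiple series — carrying the size gain $2^{-j\delta}$, the weight gain $2^{-l\lambda n}$, the smoothness gain $2^{-l(\delta+\gamma)}$ against $|x-y|^\gamma$, the volume growth $2^{jn},2^{ln}$, and the logarithmic $\mathrm{BMO}$-losses $j^m,l^m$ — converges. Getting this right is exactly what pins $\lambda$ down to $\lambda>3m+(2\delta+2\gamma)/n$: equivalently, decomposing $\mathbb R^{n+1}_+$ according to $|x-z|\sim2^kt$ shows $[g^{\ast}_{\lambda}(\vec{f})(x)]^2\lesssim\sum_{k\ge0}2^{-k\lambda n}[S^{(2^k)}(\vec{f})(x)]^2$, where $S^{(a)}$ is the area integral over cones of aperture $a$; running the proof of Theorem \ref{mainthm2} with the aperture $a$ tracked shows that the corresponding local average is $\lesssim a^{\Theta}\prod_i\|f_i\|_{\mathrm{BMO}}^2$ with $\Theta=3mn+2\delta+2\gamma$, and $\sum_k 2^{-k(\lambda n-\Theta)}$ converges precisely when $\lambda>3m+(2\delta+2\gamma)/n$. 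Everything else is a faithful transcription of the arguments already carried out for $g(\vec{f})$ and $S(\vec{f})$, now with the weight inserted.
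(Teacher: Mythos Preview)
Your proposal is substantively correct, but it overshoots the target. In the paper this Corollary is a one-line consequence: Theorem~\ref{mainthm3} already asserts $\big\|[g^{\ast}_{\lambda}(\vec{f})]^2\big\|_{\mathrm{BLO}}\lesssim\prod_i\|f_i\|^2_{\mathrm{BMO}}$, and the Corollary is obtained by applying the elementary inequality \eqref{BLOsquare}, namely $\|\mathcal{F}\|^2_{\mathrm{BLO}}\le\|\mathcal{F}^2\|_{\mathrm{BLO}}$. What you have written is, in effect, a proof of Theorem~\ref{mainthm3} itself --- which the paper does not carry out in detail either, merely saying that one uses ``the same procedure as in the proofs of Theorems~\ref{mainthm1} and~\ref{mainthm2}'' together with Theorem~\ref{gs1ambda}. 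Your adaptation (the $t\in(0,r]$/$t\in(r,\infty)$ split, the threefold decomposition of each $f_i$, the dyadic shelling of the base point $z$, and the reduction to cones of growing aperture via $[g^{\ast}_{\lambda}]^2\lesssim\sum_{k\ge0}2^{-k\lambda n}[S^{(2^k)}]^2$) is exactly in the spirit of that sketch, and your aperture-tracking explanation of the threshold $\lambda>3m+(2\delta+2\gamma)/n$ is a helpful addition the paper does not make explicit.

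One genuine omission: your argument bounds $I_0+I_\infty$, which controls $\big\|[g^{\ast}_{\lambda}(\vec{f})]^2\big\|_{\mathrm{BLO}}$, not $\big\|g^{\ast}_{\lambda}(\vec{f})\big\|_{\mathrm{BLO}}$. You still need to invoke \eqref{BLOsquare} at the very end to pass from the squared estimate to the statement of the Corollary; this is the only step the paper actually records as its proof.
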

We now consider the multilinear analogues of Leckband's result for $g$-function, Lusin's area integral, and Littlewood--Paley $g^{\ast}_{\lambda}$-function. Let $\mathcal{T}_{g}(\vec{f})$ be one of the multilinear operators $g(\vec{f})$, $S(\vec{f})$ and $g^{\ast}_{\lambda}(\vec{f})$ for $\lambda>2m$, by using similar arguments to those in \cite{he}, \cite{leck} and \cite{sun}, we can show that $\mathcal{T}_{g}(\vec{f})(x)$ is finite everywhere in $\mathbb R^n$, and bounded from $L^{\infty}(\mathbb R^n)\times\cdots\times L^{\infty}(\mathbb R^n)$ into $\mathrm{BLO}(\mathbb R^n)$, in view of the relation \eqref{relation11}.

When $f_i\in L^{\infty}(\mathbb R^n)$ for $i=1,2,\dots,m$, we denote simply by
\begin{equation*}
\vec{f}:=(f_1,f_2,\dots,f_m)\in [L^{\infty}(\mathbb R^n)]^{m}.
\end{equation*}

We can deduce the following results.
\begin{thm}\label{mainthm11}
For any $\vec{f}=(f_1,f_2,\dots,f_m)\in [L^{\infty}(\mathbb R^n)]^{m}$ and $2\leq m\in \mathbb{N}$, then $g(\vec{f})(x)$ is finite everywhere, and there exists a positive constant $C$, independent of $\vec{f}$, such that
\begin{equation*}
\big\|\big[g(\vec{f})\big]^2\big\|_{\mathrm{BLO}}\leq C\prod_{i=1}^m\big\|f_i\big\|^2_{L^{\infty}},
\end{equation*}
and hence
\begin{equation*}
\big\|g(\vec{f})\big\|_{\mathrm{BLO}}\leq C\prod_{i=1}^m\big\|f_i\big\|_{L^{\infty}}.
\end{equation*}
\end{thm}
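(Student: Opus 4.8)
The plan is to derive Theorem~\ref{mainthm11} from Theorem~\ref{mainthm1} via the elementary inclusion $L^{\infty}(\mathbb R^n)\subset\mathrm{BMO}(\mathbb R^n)$. Chaining \eqref{relation11} and \eqref{bmoblo} gives $\|f_i\|_{\mathrm{BMO}}\le 2\|f_i\|_{\mathrm{BLO}}\le 4\|f_i\|_{L^{\infty}}$ for each $i$, so if $\vec f=(f_1,\dots,f_m)\in[L^{\infty}(\mathbb R^n)]^{m}$ then $\vec f\in[\mathrm{BMO}(\mathbb R^n)]^{m}$. Once we know that $g(\vec f)$ is \emph{not} identically $+\infty$, Theorem~\ref{mainthm1} tells us that $g(\vec f)$ is finite almost everywhere and that
\[
\big\|[g(\vec f)]^{2}\big\|_{\mathrm{BLO}}\le C\prod_{i=1}^{m}\|f_i\|_{\mathrm{BMO}}^{2}\le 16^{m}C\prod_{i=1}^{m}\|f_i\|_{L^{\infty}}^{2},
\]
which is the first asserted estimate; the second then follows from \eqref{BLOsquare}, since $\|g(\vec f)\|_{\mathrm{BLO}}^{2}\le\big\|[g(\vec f)]^{2}\big\|_{\mathrm{BLO}}$.

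Thus everything reduces to the finiteness of $g(\vec f)$, which is where the $L^{\infty}$ hypothesis (rather than merely $\mathrm{BMO}$) is needed, and here I would follow the arguments of \cite{he,leck,sun}. Fix $x_0\in\mathbb R^n$ and a ball $\mathcal B=B(x_0,r)$, and decompose each $f_i=f_i^{1}+f_i^{2}+f_i^{3}$ exactly as in the proof of Theorem~\ref{mainthm1} (constant part over $2\mathcal B$, local oscillation over $2\mathcal B$, remote part over $(2\mathcal B)^{\complement}$). By multilinearity of $\mathcal G_t$ together with the triangle inequality in $L^{2}(dt/t)$, one has $g(\vec f)(x_0)\le\sum_{(\alpha_1,\dots,\alpha_m)}g(f_1^{\alpha_1},\dots,f_m^{\alpha_m})(x_0)$, and every summand with some $\alpha_i=1$ vanishes by the vanishing condition of $\mathcal K$. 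The purely local summand $g(f_1^{2},\dots,f_m^{2})$ lies in $L^{2}(\mathbb R^n)$ by Theorem~\ref{gs1} ($L^{2m}\times\cdots\times L^{2m}\to L^{2}$ applied to the pieces $f_i^{2}\in L^{2m}$), hence is finite; for each remaining summand, which involves at least one remote factor $f_i^{3}$, one repeats the estimates of Theorem~\ref{mainthm1}, using the dyadic annular decompositions \eqref{equaw1}, \eqref{equaw2} with the trivial bound $\|f_i-(f_i)_{2\mathcal B}\|_{L^{\infty}(2^{j}\mathcal B)}\lesssim\|f_i\|_{L^{\infty}}$ in place of Lemma~\ref{BMOp}(2). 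Since no polynomial factor in $j$ now appears, the size condition yields a convergent series with a $t^{\delta}$ gain on $0<t\le r$, and the smoothness (together with vanishing) condition a convergent series with a $(r/t)^{\gamma}$ gain on $t>r$, so $\int_0^{\infty}|\mathcal G_t(f_1^{\alpha_1},\dots,f_m^{\alpha_m})(x_0)|^{2}\,dt/t<\infty$. It follows that $g(\vec f)(x_0)<+\infty$, and, $x_0$ being arbitrary, $g(\vec f)$ is finite everywhere; in particular it is not identically $+\infty$, so Theorem~\ref{mainthm1} applies and the proof is complete.

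The step I expect to require the most care is the large-$t$ part of the pointwise finiteness for the all-remote summand $g(f_1^{3},\dots,f_m^{3})$: there no factor is localized near $x_0$, so one cannot appeal to the local $L^{2}$ theory of Theorem~\ref{gs1}, and one must extract the decay in $t$ directly from the cancellation encoded in the vanishing moments of $\mathcal K$ (rather than only controlling the oscillation $|g_\infty(\vec f)(x_0)^{2}-g_\infty(\vec f)(y)^{2}|$, which sufficed in Theorem~\ref{mainthm1}). This is exactly the computation carried out in the $L^{\infty}$ setting by Leckband \cite{leck} for the classical and generalized Littlewood--Paley operators, and in \cite{he,sun} for the multilinear and linear generalizations; transplanting it here, with $\|\cdot\|_{L^{\infty}}$ replacing $\|\cdot\|_{\mathrm{BMO}}$ throughout, introduces no new difficulty, and the same scheme simultaneously treats $S(\vec f)$ and $g^{\ast}_{\lambda}(\vec f)$ for $\lambda>2m$.
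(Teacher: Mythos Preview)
Your overall strategy—reduce to Theorem~\ref{mainthm1} via the inclusion $L^{\infty}\subset\mathrm{BMO}$ after first verifying that $g(\vec f)$ is not identically $+\infty$—is exactly what the paper does: it does not write out a separate proof but simply invokes the arguments of \cite{he,leck,sun} together with the relation \eqref{relation11}, and then lets Theorem~\ref{mainthm1} do the work. For the BLO estimates your chain of implications is correct, and the deduction of $\|g(\vec f)\|_{\mathrm{BLO}}$ from $\|[g(\vec f)]^2\|_{\mathrm{BLO}}$ via \eqref{BLOsquare} is fine.

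There is, however, a gap in the step where you claim finiteness \emph{at the specific point $x_0$}. You say the purely local summand $g(f_1^{2},\dots,f_m^{2})$ ``lies in $L^{2}(\mathbb R^n)$ \dots\ hence is finite,'' but $L^2$ membership yields only almost-everywhere finiteness, not finiteness at the chosen center $x_0$. For this summand there is no spatial separation between $x_0$ and $\operatorname{supp}f_i^{2}\subset 2\mathcal B$, so for small $t$ the size condition gives no $t^{\delta}$ gain, and the cancellation of $\mathcal K$ is of no help since the $f_i^{2}$ are merely bounded (not continuous) near $x_0$; a dilation-invariance argument with a Heaviside-type input shows that $\int_0^r|\mathcal G_t(f_1^2,\dots,f_m^2)(x_0)|^2\,dt/t$ can genuinely diverge. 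Consequently your argument establishes only that $g(\vec f)$ is finite \emph{almost everywhere}. This is still enough to land in the ``latter case'' of Theorem~\ref{mainthm1} and thereby obtain both displayed BLO estimates, so the substantive content of the theorem survives. But the sentence ``$x_0$ being arbitrary, $g(\vec f)$ is finite everywhere'' is not justified by what you have written; to match the paper's stated conclusion one must appeal directly to the pointwise $L^\infty$ computations in \cite{leck,sun,he}, as the paper itself does, rather than to the $L^{2m}\times\cdots\times L^{2m}\to L^2$ bound.
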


\begin{thm}\label{mainthm12}
For any $\vec{f}=(f_1,f_2,\dots,f_m)\in [L^{\infty}(\mathbb R^n)]^{m}$ and $2\leq m\in \mathbb{N}$, then $S(\vec{f})(x)$ is finite everywhere, and there exists a positive constant $C$, independent of $\vec{f}$, such that
\begin{equation*}
\big\|\big[S(\vec{f})\big]^2\big\|_{\mathrm{BLO}}\leq C\prod_{i=1}^m\big\|f_i\big\|^2_{L^{\infty}},
\end{equation*}
and hence
\begin{equation*}
\big\|S(\vec{f})\big\|_{\mathrm{BLO}}\leq C\prod_{i=1}^m\big\|f_i\big\|_{L^{\infty}}.
\end{equation*}
\end{thm}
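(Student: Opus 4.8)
The plan is to reduce the $\mathrm{BLO}$ estimate to its $\mathrm{BMO}$ counterpart, Theorem~\ref{mainthm2}, via the embedding $L^{\infty}(\mathbb R^n)\subset\mathrm{BMO}(\mathbb R^n)$, and then to establish separately the everywhere finiteness of $S(\vec{f})$ that the $L^{\infty}$ hypothesis should supply. Since $\|f_i\|_{\mathrm{BMO}}\le 2\|f_i\|_{L^{\infty}}$ for every $i$, once $S(\vec{f})$ is known to be finite at even a single point, Theorem~\ref{mainthm2} places us in its ``finite almost everywhere'' branch and gives
\[
\big\|\big[S(\vec{f})\big]^2\big\|_{\mathrm{BLO}}\lesssim\prod_{i=1}^m\|f_i\|_{\mathrm{BMO}}^2\le C\prod_{i=1}^m\|f_i\|_{L^{\infty}}^2,
\]
while the second assertion $\|S(\vec{f})\|_{\mathrm{BLO}}\le C\prod_{i=1}^m\|f_i\|_{L^{\infty}}$ then follows immediately from \eqref{BLOsquare}. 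Hence the whole burden is the finiteness statement, which I would obtain by pushing the $J_0$/$J_\infty$ decomposition from the proof of Theorem~\ref{mainthm2} from an almost-everywhere statement to a pointwise one.

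Fix an arbitrary $x_0\in\mathbb R^n$ and put $\mathcal B=B(x_0,1)$; split $[S(\vec{f})(x_0)]^2=[S_0(\vec{f})(x_0)]^2+[S_\infty(\vec{f})(x_0)]^2$ at height $t=1$. For the outer piece I would argue exactly as in the derivation of \eqref{keyestiw5}: for $2^{k}<t\le 2^{k+1}$ and $|z-x_0|<t$, subtracting the mean of each $f_i$ over $B(x_0,2^{k+2})$ and combining the on-diagonal bound $|\mathcal K_t|\lesssim t^{-mn}$ with the size condition of $\mathcal K$ yields $|\mathcal G_t(\vec{f})(z)|\lesssim\prod_i\|f_i\|_{L^{\infty}}$, while the tail annuli $(2^{j+1}\mathcal B)^m\setminus(2^j\mathcal B)^m$ contribute an extra positive power of $t$; feeding these into $\int_1^{\infty}\int_{|z-x_0|<t}(\cdots)\,\frac{dz\,dt}{t^{n+1}}$ gives a convergent bound. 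For the inner piece, decompose each $f_i=f_i^1+f_i^2+f_i^3$ relative to $2\mathcal B$ (constant, local, far) as before; every term carrying an $f_i^1$ vanishes by the vanishing condition, and every term carrying at least one $f_i^3$ obeys, for $0<t\le 1$, an estimate of the shape $|\mathcal G_t(\cdots)(z)|\lesssim t^{\delta}\sum_{j\ge 1}j^{m}\,m(2^{j}\mathcal B)^{-\delta/n}\prod_i\|f_i\|_{L^{\infty}}$, whose square integrates against $t^{2\delta-1}\,dt$ over $(0,1]$ to a finite number. This leaves the purely local term $\mathcal G_t(f_1^2,\dots,f_m^2)$: each $f_i^2$ is bounded with compact support, hence lies in $L^{2m}(\mathbb R^n)$, so by Theorem~\ref{gs1} we have $S(f_1^2,\dots,f_m^2)\in L^2(\mathbb R^n)$ and in particular $S(f_1^2,\dots,f_m^2)(x)<\infty$ for almost every $x$; to promote this to finiteness at the prescribed point $x_0$ I would exploit the smoothness condition of $\mathcal K$ to gain an additional positive power of $t$ in $\mathcal G_t(f_1^2,\dots,f_m^2)(z)$ as $t\to 0^{+}$, along the lines of the $L^{\infty}$ arguments of Leckband~\cite{leck} and Sun~\cite{sun}. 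Summing the three parts gives $S(\vec{f})(x_0)<\infty$, and $x_0$ being arbitrary, $S(\vec{f})$ is finite everywhere.

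The step I expect to be the genuine obstacle is precisely this last one. The uniform bound $|\mathcal G_t(\vec{f})(z)|\lesssim\prod_i\|f_i\|_{L^{\infty}}$ renders $\int_0^1\int_{|z-x_0|<t}(\cdots)\,\frac{dz\,dt}{t^{n+1}}$ only logarithmically divergent, so merely knowing the size of $\mathcal K$ is not enough near $t=0$: one must extract a positive power of $t$ from the cancellation and smoothness of $\mathcal K$ acting on the local pieces, and this is the point at which the $\mathrm{BMO}$ machinery used for Theorem~\ref{mainthm2} has to be supplemented by the $L^{\infty}$-specific techniques of \cite{he,leck,sun}. Once that local small-scale estimate is secured, the remainder is a routine transcription of the computations already performed for Theorems~\ref{mainthm1} and~\ref{mainthm2}, with each $\|f_i\|_{\mathrm{BMO}}$ replaced throughout by $2\|f_i\|_{L^{\infty}}$.
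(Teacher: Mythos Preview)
Your overall strategy—deduce the $\mathrm{BLO}$ bound from Theorem~\ref{mainthm2} via $\|f_i\|_{\mathrm{BMO}}\le 2\|f_i\|_{L^\infty}$ once finiteness at one point is secured, then pass to the non-squared version by \eqref{BLOsquare}—is exactly what the paper does: it asserts the everywhere-finiteness by reference to \cite{he,leck,sun} and invokes \eqref{relation11}, giving no further argument. So the only substantive content in your proposal is the finiteness sketch, and that is where there is a genuine gap.

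The problem is the outer piece $S_\infty$. You claim that for $2^k<t\le 2^{k+1}$ and $|z-x_0|<t$ the tail annuli contribute ``an extra positive power of $t$''. They do not: following the computation leading to \eqref{keyestiw5}, the factor $t^\delta$ coming from the size condition of $\mathcal K$ is cancelled exactly by the $(2^{j}r)^{-\delta}$ in the denominator once $j\ge k+2$, and the remaining sum $\sum_{j\ge k+2}2^{-(j-k)\delta}$ is a constant independent of $t$ (and of $k$). With only the uniform bound $|\mathcal G_t(\vec f)(z)|\lesssim\prod_i\|f_i\|_{L^\infty}$ in hand you are left with
\[
\int_1^{\infty}\!\!\int_{|z-x_0|<t}|\mathcal G_t(\vec f)(z)|^2\,\frac{dz\,dt}{t^{n+1}}
\;\lesssim\;C^2\int_1^{\infty}\frac{dt}{t}=\infty,
\]
the same logarithmic divergence you correctly flagged for the small-$t$ local term, but now at $t\to\infty$, where the smoothness-based mechanisms of \cite{leck,sun} (which gain a power of $t$ only as $t\to 0^+$) are of no help. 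That this obstruction is real, and not merely a missing detail, can be seen by taking each $f_i$ homogeneous of degree zero, say $f_i(y)=\mathrm{sgn}\,y_1\in L^\infty$: then $\mathcal G_t(\vec f)(z)$ is a fixed function of $z/t$, so $[S(\vec f)(0)]^2$ factorises as a nonnegative constant times $\int_0^\infty dt/t$. The paper itself does not confront this issue; it simply defers to the cited literature.
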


\begin{thm}\label{mainthm13}
Suppose that $\lambda>3m+{(2\delta+2\gamma)}/n$ and $\delta,\gamma>0$. For any $\vec{f}=(f_1,f_2,\dots,f_m)\in [L^{\infty}(\mathbb R^n)]^{m}$ and $2\leq m\in \mathbb{N}$, then $g^{\ast}_{\lambda}(\vec{f})(x)$ is finite everywhere, and there exists a positive constant $C$, independent of $\vec{f}$, such that
\begin{equation*}
\big\|\big[g^{\ast}_{\lambda}(\vec{f})\big]^2\big\|_{\mathrm{BLO}}\leq C\prod_{i=1}^m\big\|f_i\big\|^2_{L^{\infty}},
\end{equation*}
and hence
\begin{equation*}
\big\|g^{\ast}_{\lambda}(\vec{f})\big\|_{\mathrm{BLO}}\leq C\prod_{i=1}^m\big\|f_i\big\|_{L^{\infty}}.
\end{equation*}
\end{thm}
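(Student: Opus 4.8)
The plan is to reduce Theorem~\ref{mainthm13} to Theorem~\ref{mainthm3} via the inclusion $L^{\infty}(\mathbb R^n)\subset\mathrm{BMO}(\mathbb R^n)$. For any $f\in L^{\infty}(\mathbb R^n)$ and any ball $\mathcal B$ one has $\frac{1}{m(\mathcal B)}\int_{\mathcal B}|f(x)-f_{\mathcal B}|\,dx\le 2\|f\|_{L^{\infty}}$, hence $\|f\|_{\mathrm{BMO}}\le 2\|f\|_{L^{\infty}}$. Consequently, \emph{once we know that $g^{\ast}_{\lambda}(\vec f)$ is finite at a single point}, Theorem~\ref{mainthm3} (whose hypothesis $\lambda>3m+(2\delta+2\gamma)/n$ is exactly the one assumed here) already gives the a.e.\ finiteness together with
\[
\big\|[g^{\ast}_{\lambda}(\vec f)]^{2}\big\|_{\mathrm{BLO}}
\lesssim\prod_{i=1}^{m}\|f_i\|_{\mathrm{BMO}}^{2}
\lesssim\prod_{i=1}^{m}\|f_i\|_{L^{\infty}}^{2},
\]
and then $\|g^{\ast}_{\lambda}(\vec f)\|_{\mathrm{BLO}}\lesssim\prod_{i=1}^{m}\|f_i\|_{L^{\infty}}$ follows at once from \eqref{BLOsquare}. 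So everything comes down to proving that, for bounded data, $g^{\ast}_{\lambda}(\vec f)(x)<+\infty$ for \emph{every} $x\in\mathbb R^n$ (which in particular rules out the ``infinite everywhere'' alternative).

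To establish finiteness at a fixed point $x_{0}$, I would mimic the decomposition used in the proofs of Theorems~\ref{mainthm1} and~\ref{mainthm2} (and their $g^{\ast}_{\lambda}$-adaptation leading to Theorem~\ref{mainthm3}), now anchored at $x_{0}$: take $\mathcal B=B(x_{0},1)$, split the defining double integral of $[g^{\ast}_{\lambda}(\vec f)(x_{0})]^{2}$ into the region $0<t\le 1$ and the region $t>1$, and decompose each $f_i=f_i\chi_{2\mathcal B}+f_i\chi_{(2\mathcal B)^{\complement}}=:f_i^{0}+f_i^{\infty}$, expanding $\prod_i f_i$ as in \eqref{sum123}. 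For the region $0<t\le 1$: the term carrying only the $f_i^{0}$'s is handled by Theorem~\ref{gs1ambda}, since each $f_i^{0}$ is bounded with compact support and hence lies in $L^{q_i}(\mathbb R^n)$; the mixed terms (at least one $f_i^{\infty}$) are controlled pointwise by the size condition of $\mathcal K$ exactly as in the estimate of $I_{0}$ in the proof of Theorem~\ref{mainthm1}, with $\|f_i\|_{\mathrm{BMO}}$ replaced everywhere by $2\|f_i\|_{L^{\infty}}$. For the region $t>1$: one uses the refined pointwise bound \eqref{keyestiw1} (again with $\|f_i\|_{\mathrm{BMO}}$ replaced by $2\|f_i\|_{L^{\infty}}$), the smoothness condition of $\mathcal K$, and the integrability in $t$ furnished by the assumed range of $\lambda$, following the treatment of $I_{\infty}$ for the $g$-function, of $J_{\infty}$ for the area integral, and of the corresponding step for $g^{\ast}_{\lambda}$; summing up yields $g^{\ast}_{\lambda}(\vec f)(x_{0})<+\infty$.

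I expect the region $t>1$, i.e.\ the large-scale behaviour of $\mathcal G_t(\vec f)$, to be the main obstacle: for data that are merely bounded (rather than compactly supported and bounded) the crude estimate $|\mathcal G_t(\vec f)(z)|\le\|\mathcal K\|_{L^{1}}\prod_i\|f_i\|_{L^{\infty}}$ is not itself enough to force convergence of the $t$-integral, so one must genuinely exploit the vanishing condition of $\mathcal K$ in each variable (passing to the differences $f_i-(f_i)_{2^{k+1}\mathcal B}$ at the scale of $t$) together with the $g^{\ast}_{\lambda}$-weight, precisely as in the analogous steps of \cite{he,leck,sun}. Everything else is a routine transcription of the arguments already given for Theorems~\ref{mainthm1} and~\ref{mainthm2}, with $\|\cdot\|_{\mathrm{BMO}}$ systematically replaced by $2\|\cdot\|_{L^{\infty}}$; the companion statements for $g(\vec f)$ and $S(\vec f)$ (Theorems~\ref{mainthm11} and~\ref{mainthm12}) are obtained by the same scheme, dropping the $g^{\ast}_{\lambda}$-weight and invoking Theorem~\ref{gs1} in place of Theorem~\ref{gs1ambda}.
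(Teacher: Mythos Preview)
Your proposal is correct and matches the paper's own treatment, which is itself only a one-sentence sketch: the paper does not give a detailed proof of Theorem~\ref{mainthm13} but simply asserts that ``by using similar arguments to those in \cite{he}, \cite{leck} and \cite{sun}'' one obtains everywhere finiteness and the $L^{\infty}\!\to\!\mathrm{BLO}$ bound, invoking the inclusion \eqref{relation11}. Your plan---reduce the BLO estimate to Theorem~\ref{mainthm3} via $\|f\|_{\mathrm{BMO}}\le 2\|f\|_{L^{\infty}}$, and isolate the only remaining task as proving $g^{\ast}_{\lambda}(\vec f)(x_0)<\infty$ at an arbitrary point using the decomposition and kernel estimates from the proofs of Theorems~\ref{mainthm1}--\ref{mainthm2} together with the finiteness arguments of \cite{he,leck,sun}---is exactly the content behind that sketch, and you have correctly flagged the genuine difficulty (the region $t>1$, where the uniform bound \eqref{keyestiw1} alone does not yield $t$-integrability and one must exploit the vanishing moments at each scale).
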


\section{Concluding remarks}
In the last section, we point out that our arguments may be extended to the case where the kernels of multilinear Littlewood--Paley operators are of non-convolution type, and the conclusions of our main theorems remain true in this context. In 2015, Xue and Yan defined and studied the multilinear Littlewood--Paley operators with non-convolution type kernels, including multilinear $g$-function, Lusin's area integral and Littlewood--Paley $g^{\ast}_{\lambda}$-function. By using similar arguments, we can also obtain the existence and boundedness of multilinear Littlewood--Paley operators with non-convolution type kernels on products of BMO spaces (BMO--BLO results).

Let us give the definition of the multilinear Littlewood--Paley kernel (of non-convolution type).
\begin{defin}[\cite{xueqing}]
Let $K(x,y_1,\dots,y_m)$ be a locally integrable function defined away from the diagonal $x=y_1=\cdots=y_m$ in $(\mathbb R^n)^{m+1}$. We say that a function $K(x,y_1,\dots,y_m)$ defined on $(\mathbb R^n)^{m+1}$ is a multilinear Littlewood--Paley kernel (of non-convolution type), if the following three conditions are satisfied.
\begin{enumerate}
  \item (\textbf{The vanishing condition}): for all $x\in \mathbb R^n$,
  \begin{equation*}
  \int_{\mathbb R^n}K(x,y_1,\dots,y_i,\dots,y_m)\,dy_i=0,\quad \mbox{for}~~ i=1,2,\dots,m;
  \end{equation*}
  \item (\textbf{the size condition}): for some positive constants $C$ and $\delta$,
  \begin{equation*}
  \big|K(x,y_1,y_2,\dots,y_m)\big|\leq C\cdot\frac{1}{(1+\sum_{j=1}^m|y_j|)^{mn+\delta}};
  \end{equation*}
  \item (\textbf{the smoothness condition}): for some positive constants $C$ and $\gamma$,
  \begin{equation*}
  \big|K(x,y_1,\dots,y_i,\dots,y_m)-K(x,y_1,\dots,y'_i,\dots,y_m)\big|
  \leq C\cdot\frac{|y_i-y_i'|^{\gamma}}{(1+\sum_{j=1}^m|x-y_j|)^{mn+\delta+\gamma}}
  \end{equation*}
  whenever $2|y_i-y_i'|\leq |x-y_i|$ for all $1\leq i\leq m$, and
  \begin{equation*}
  \big|K(x,y_1,y_2,\dots,y_m)-K(x',y_1,y_2,\dots,y_m)\big|
  \leq C\cdot\frac{|x-x'|^{\gamma}}{(1+\sum_{j=1}^m|x-y_j|)^{mn+\delta+\gamma}}
  \end{equation*}
  whenever $2|x-x'|\leq\max_{1\leq j\leq m}|x-y_j|$.
\end{enumerate}
\end{defin}

\begin{defin}[\cite{xueqing}]
For any $\vec{f}=(f_1,\dots,f_m)\in \overbrace{\mathcal{S}(\mathbb R^n)\times\cdots\times \mathcal{S}(\mathbb R^n)}^m$ and any $t>0$, we denote
\begin{equation*}
\mathcal{K}_t(x,y_1,y_2,\dots,y_m):=\frac{1}{t^{mn}}\mathcal{K}\Big(\frac{x}{\,t\,},\frac{y_1}{t},\frac{y_2}{t},\dots,\frac{y_m}{t}\Big),
\end{equation*}
and
\begin{equation*}
\mathcal{G}_t(\vec{f})(x):=\int_{(\mathbb R^n)^m}\mathcal{K}_t(x,y_1,y_2,\dots,y_m)\prod_{i=1}^m f_i(y_i)\,dy_i,
\quad \mbox{for all}\;\,x\notin\bigcap_{i=1}^m \mathrm{supp}\, f_i.
\end{equation*}
Then the multilinear Littlewood--Paley $g$-function, multilinear Lusin's area integral and multilinear Littlewood--Paley $g^{\ast}_{\lambda}$-function with non-convolution type kernels are defined, respectively, by
\begin{equation*}
g'(\vec{f})(x):=\bigg(\int_0^{\infty}\big|\mathcal{G}_t(\vec{f})(x)\big|^2\frac{dt}{t}\bigg)^{1/2},~~
S'(\vec{f})(x):=\bigg(\iint_{\Gamma(x)}\big|\mathcal{G}_t(\vec{f})(z)\big|^2\frac{dzdt}{t^{n+1}}\bigg)^{1/2},
\end{equation*}
and
\begin{equation*}
g^{\ast\ast}_{\lambda}(\vec{f})(x):=\bigg(\iint_{\mathbb{R}^{n+1}_{+}}\Big(\frac{t}{t+|x-z|}\Big)^{\lambda n}
\big|\mathcal{G}_t(\vec{f})(z)\big|^2\frac{dzdt}{t^{n+1}}\bigg)^{1/2},\quad \lambda>1.
\end{equation*}
We also assume that $\mathcal{T}'_{g}$ can be extended to a bounded multilinear operator for some $1\leq q_1,q_2,\dots,q_m<\infty$, $0<q<\infty$ with $1/q=\sum_{i=1}^m 1/{q_i}$; that is,
\begin{equation*}
\mathcal{T}'_{g}:L^{q_1}(\mathbb R^n)\times L^{q_2}(\mathbb R^n)\times \cdots\times L^{q_m}(\mathbb R^n)\rightarrow L^q(\mathbb R^n),
\end{equation*}
where $\mathcal{T}'_{g}$ denotes any one of the multilinear Littlewood--Paley functions with non-convolution type kernels.
\end{defin}

\begin{rem}
\begin{enumerate}
  \item If the kernel $\mathcal{K}$ is of the form $\mathcal{K}(x-y_1,x-y_2,\dots,x-y_m)$, i.e., in the form of convolution type, then $\mathcal{T}'_{g}$ coincides with the operator defined in Section \ref{sec13}.
  \item For the theory on multilinear Littlewood--Paley operators with more general kernels, see \cite{c}, \cite{sixue} and \cite{xueqing} for more details.
\end{enumerate}
\end{rem}

\begin{thm}[\cite{he}]
Let $2\leq m\in \mathbb{N}$, $1\leq p_1,p_2,\dots,p_m<\infty$ and $0<p<\infty$ with
\begin{equation*}
\frac{\,1\,}{p}=\frac{1}{p_1}+\frac{1}{p_2}+\cdots+\frac{1}{p_m}.
\end{equation*}
Then the following statements hold:

$(i)$ If each $p_i>1$, $i=1,2,\dots,m$, then there is a constant $C>0$ independent of $\vec{f}$ such that
\begin{equation*}
\big\|g'(\vec{f})\big\|_{L^p}\leq C\prod_{i=1}^m\|f_i\|_{L^{p_i}},
\quad \big\|S'(\vec{f})\big\|_{L^p}\leq C\prod_{i=1}^m\|f_i\|_{L^{p_i}},
\end{equation*}
hold for all $\vec{f}=(f_1,f_2,\dots,f_m)\in L^{p_1}(\mathbb R^n)\times L^{p_2}(\mathbb R^n)\times \cdots\times L^{p_m}(\mathbb R^n)$.

$(ii)$ If at least one $p_i=1$, then there is a constant $C>0$ independent of $\vec{f}$ such that
\begin{equation*}
\big\|g'(\vec{f})\big\|_{L^{p,\infty}}\leq C\prod_{i=1}^m\|f_i\|_{L^{p_i}},
\quad \big\|S'(\vec{f})\big\|_{L^{p,\infty}}\leq C\prod_{i=1}^m\|f_i\|_{L^{p_i}},
\end{equation*}
hold for all $\vec{f}=(f_1,f_2,\dots,f_m)\in L^{p_1}(\mathbb R^n)\times L^{p_2}(\mathbb R^n)\times \cdots\times L^{p_m}(\mathbb R^n)$.
In particular, the multilinear operators $g'$ and $S'$ are bounded from $L^{1}(\mathbb R^n)\times L^{1}(\mathbb R^n)\times \cdots\times L^{1}(\mathbb R^n)$ into $L^{1/m,\infty}(\mathbb R^n)$.
\end{thm}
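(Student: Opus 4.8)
The plan is to recognize $g'$ and $S'$ as $m$-linear operators whose kernels take values in a Hilbert space, and then invoke the vector-valued version of the multilinear Calder\'on--Zygmund theory of Grafakos and Torres, using as starting point the a priori $L^{q_1}\times\cdots\times L^{q_m}\to L^q$ boundedness on one tuple of exponents that is built into the definition.

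First I would fix the Hilbert space. For $g'$ set $\mathbb{H}:=L^2((0,\infty),dt/t)$, and for $S'$ set $\mathbb{H}:=L^2(\Gamma_0,t^{-n-1}\,dz\,dt)$ with $\Gamma_0:=\{(z,t)\in\mathbb R^{n+1}_+:|z|<t\}$. Then $g'(\vec f)(x)$ is the $\mathbb H$-norm of the function $t\mapsto\mathcal G_t(\vec f)(x)$, and $S'(\vec f)(x)$ is the $\mathbb H$-norm of $(z,t)\mapsto\mathcal G_t(\vec f)(x+z)$; so in each case the operator is the pointwise $\mathbb H$-norm of an $\mathbb H$-valued $m$-linear operator $\vec{\mathcal G}(\vec f)$ whose kernel is $\mathbf K(x,y_1,\dots,y_m):=\big(t\mapsto\mathcal K_t(x,y_1,\dots,y_m)\big)$ (respectively its $z$-shifted analogue for $S'$).

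Next I would verify the $\mathbb H$-valued Calder\'on--Zygmund size and regularity estimates. From $\mathcal K_t(x,\vec y)=t^{-mn}K(x/t,y_1/t,\dots,y_m/t)$ and the size condition one gets $|\mathcal K_t(x,\vec y)|\lesssim t^{\delta}\big(t+\sum_{j}|x-y_j|\big)^{-mn-\delta}$, and then the elementary identity $\int_0^\infty t^{2\delta}(t+A)^{-2mn-2\delta}\,dt/t\approx A^{-2mn}$ with $A=\sum_j|x-y_j|$ yields $\|\mathbf K(x,\vec y)\|_{\mathbb H}\lesssim\big(\sum_j|x-y_j|\big)^{-mn}$. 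The two smoothness conditions on $K$, combined with the same scaling and the same integral, give the H\"older estimates $\|\mathbf K(x,\dots,y_i,\dots)-\mathbf K(x,\dots,y_i',\dots)\|_{\mathbb H}\lesssim|y_i-y_i'|^{\gamma}\big(\sum_j|x-y_j|\big)^{-mn-\gamma}$ and $\|\mathbf K(x,\vec y)-\mathbf K(x',\vec y)\|_{\mathbb H}\lesssim|x-x'|^{\gamma}\big(\sum_j|x-y_j|\big)^{-mn-\gamma}$ under the usual smallness hypotheses. For $S'$ one extra elementary remark is needed, namely that $|z|<t$ forces $t+\sum_j|x+z-y_j|\approx t+\sum_j|x-y_j|$, so the Fubini-type shift by $z$ inside the tent does not affect any of these bounds.

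Finally, with the a priori estimate $g',S':L^{q_1}\times\cdots\times L^{q_m}\to L^{q}$ (one may take $q_i=2$, $q=2/m$) together with the $\mathbb H$-valued kernel estimates just verified, the multilinear Calder\'on--Zygmund theorem of Grafakos--Torres in its Hilbert-space-valued form applies to $\vec{\mathcal G}$ and produces boundedness $L^{p_1}\times\cdots\times L^{p_m}\to L^{p}$ for all $1<p_i<\infty$ with $1/p=\sum_i 1/p_i$, together with the endpoint weak-type bound into $L^{p,\infty}$ whenever at least one $p_i=1$, and in particular $L^1\times\cdots\times L^1\to L^{1/m,\infty}$. Taking pointwise $\mathbb H$-norms transfers all of these bounds from $\vec{\mathcal G}$ to $g'$ and $S'$, which is the assertion; the vanishing condition on $K$ is not needed here and enters only the BMO--BLO arguments. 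I expect the main work to be the third paragraph's appeal to the vector-valued machinery---in particular running the multilinear Calder\'on--Zygmund decomposition behind the weak $(1,\dots,1)$ endpoint with Hilbert-space-valued output---while for $S'$ the only genuine nuisance is bookkeeping with the tent-space geometry when bounding kernel differences, which is routine.
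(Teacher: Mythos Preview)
The paper does not supply a proof of this statement: it is quoted as a known result from \cite{he} (and ultimately \cite{xueqing}), so there is no ``paper's own proof'' to compare against. Your outline is the standard route taken in those references and is essentially correct: realize $g'$ and $S'$ as the pointwise $\mathbb H$-norm of an $\mathbb H$-valued $m$-linear singular integral, check the $\mathbb H$-valued Calder\'on--Zygmund kernel estimates via the scaling of $\mathcal K_t$ and the elementary integral $\int_0^\infty t^{2\delta}(t+A)^{-2mn-2\delta}\,dt/t\approx A^{-2mn}$, and then feed the assumed single-exponent boundedness into the Grafakos--Torres multilinear machinery (which, as you note, goes through verbatim for Hilbert-space-valued output).

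Two small remarks. First, the size condition for the non-convolution kernel as printed in this paper reads $(1+\sum_j|y_j|)^{-mn-\delta}$ rather than $(1+\sum_j|x-y_j|)^{-mn-\delta}$; since the smoothness conditions are written in terms of $|x-y_j|$, the former is almost certainly a typo, and your derivation tacitly (and correctly) uses the intended version. Second, your parenthetical ``one may take $q_i=2$, $q=2/m$'' is not something you get to choose: the a~priori boundedness is an \emph{assumption} built into the definition of $\mathcal T'_g$ at some unspecified tuple $(q_1,\dots,q_m;q)$, so you should simply invoke that assumed tuple rather than fix a particular one.
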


\begin{thm}[\cite{he}]\label{he}
Suppose that $\lambda>2m$ and $0<\gamma<\min\{{n(\lambda-2m)}/2,\delta\}$. Let $2\leq m\in \mathbb{N}$, $1\leq p_1,p_2,\dots,p_m<\infty$ and $0<p<\infty$ with
\begin{equation*}
\frac{\,1\,}{p}=\frac{1}{p_1}+\frac{1}{p_2}+\cdots+\frac{1}{p_m}.
\end{equation*}
Then the following statements hold:

$(i)$ If each $p_i>1$, $i=1,2,\dots,m$, then there is a constant $C>0$ independent of $\vec{f}$ such that
\begin{equation*}
\big\|g^{\ast\ast}_{\lambda}(\vec{f})\big\|_{L^p}\leq C\prod_{i=1}^m\|f_i\|_{L^{p_i}}
\end{equation*}
holds for all $\vec{f}=(f_1,f_2,\dots,f_m)\in L^{p_1}(\mathbb R^n)\times L^{p_2}(\mathbb R^n)\times \cdots\times L^{p_m}(\mathbb R^n)$.

$(ii)$ If at least one $p_i=1$, then there is a constant $C>0$ independent of $\vec{f}$ such that
\begin{equation*}
\big\|g^{\ast\ast}_{\lambda}(\vec{f})\big\|_{L^{p,\infty}}\leq C\prod_{i=1}^m\|f_i\|_{L^{p_i}}
\end{equation*}
holds for all $\vec{f}=(f_1,f_2,\dots,f_m)\in L^{p_1}(\mathbb R^n)\times L^{p_2}(\mathbb R^n)\times \cdots\times L^{p_m}(\mathbb R^n)$.
In particular, the multilinear operator $g^{\ast\ast}_{\lambda}$ is bounded from $L^{1}(\mathbb R^n)\times L^{1}(\mathbb R^n)\times \cdots\times L^{1}(\mathbb R^n)$ into $L^{1/m,\infty}(\mathbb R^n)$.
\end{thm}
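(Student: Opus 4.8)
The plan is to deduce the stated bounds by reducing the $g^{\ast\ast}_\lambda$-function to Lusin area integrals with dilated apertures and then treating the area integral (and the $g'$-function) through the multilinear Calder\'on--Zygmund theory applied to the $L^2$-valued $m$-linear operator $\vec f\mapsto\{\mathcal G_t(\vec f)\}_{t>0}$. For $a\ge1$ write $S'_a(\vec f)(x):=\bigl(\iint_{|z-x|<at}|\mathcal G_t(\vec f)(z)|^2\,dz\,dt/t^{n+1}\bigr)^{1/2}$, so that $S'_1=S'$. Splitting $\mathbb R^{n+1}_+$ into the cone $\{|x-z|<t\}$ and the dyadic shells $\{2^{k-1}t\le|x-z|<2^kt\}$, $k\ge1$, and using that on the $k$-th shell $(t/(t+|x-z|))^{\lambda n}\lesssim 2^{-k\lambda n}$ while the shell is contained in the aperture-$2^k$ cone, one gets the pointwise bound
\[
g^{\ast\ast}_\lambda(\vec f)(x)\lesssim\Bigl(\sum_{k\ge0}2^{-k\lambda n}\,[S'_{2^k}(\vec f)(x)]^2\Bigr)^{1/2}.
\]
A change-of-aperture argument, as in the linear theory (Fubini together with the vector-valued Hardy--Littlewood maximal inequality when $p>1$, or a good-$\lambda$/covering argument in general), gives $\|S'_a(\vec f)\|_{L^p}\lesssim a^{\beta}\prod_{i=1}^m\|f_i\|_{L^{p_i}}$ for an explicit $\beta=\beta(m,n,p)$ once the $a=1$ bound is known; summing the above series then requires $\lambda n>2\beta$, which is exactly what the hypothesis $\lambda>2m$ secures, with $0<\gamma<\min\{n(\lambda-2m)/2,\delta\}$ keeping the smoothness exponent compatible with the decay extracted from the shells.

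It remains to establish the strong $(p_1,\dots,p_m;p)$ bounds for $1<p_i<\infty$ and the weak-type endpoint for $S'$ and, since $\Gamma(x)\subset\{|x-z|<t\}$, also for $g'$. For this I would view $\mathbf T(\vec f)(x):=\{\mathcal G_t(\vec f)(x)\}_{t>0}$ as an $m$-linear operator with values in the Hilbert space $\mathcal H:=L^2((0,\infty),dt/t)$, whose $\mathcal H$-valued kernel $\mathbf K(x,\vec y):=\{\mathcal K_t(x,y_1,\dots,y_m)\}_{t>0}$, after scaling out $t$, satisfies $\|\mathbf K(x,\vec y)\|_{\mathcal H}\lesssim(\sum_j|y_j|)^{-mn}$ (from the size condition on $\mathcal K$) and the corresponding $\mathcal H$-valued H\"older estimates in each $y_i$ and in $x$ (from the two smoothness conditions), the exponent $\delta$ governing convergence of the $t$-integrals. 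Because $\mathcal T'_g$, hence $\mathbf T$, is assumed bounded for one tuple of exponents, the Grafakos--Torres multilinear Calder\'on--Zygmund theory upgrades this to the full range of exponents in the statement: strong type for $1<p_i<\infty$ by multilinear interpolation and duality, and the weak-type $(1,\dots,1;1/m)$ bound via the multilinear Calder\'on--Zygmund decomposition, where the vanishing condition $\int\mathcal K(x,\dots,y_i,\dots)\,dy_i=0$ is precisely what annihilates the ``bad'' parts. Taking $\mathcal H$-norms recovers $g'$ and $S'$, and feeding the $S'_a$ bounds into the first step recovers $g^{\ast\ast}_\lambda$.

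The main obstacle is the weak $(1,\dots,1)$ endpoint in the multilinear, vector-valued setting: in the Calder\'on--Zygmund decomposition the mixed good/bad terms must be estimated uniformly in $t>0$ before the $\mathcal H$-norm is taken, so the cancellation from the vanishing condition has to be exploited in an $L^2(dt/t)$-averaged sense rather than pointwise in $t$, and one must keep precise track of how $\delta$ (size) and $\gamma$ (smoothness) enter so that both the series over the Whitney cubes of the decomposition and the series over the aperture scales in the first step converge --- which is exactly what pins down the admissible ranges of $\lambda$ and $\gamma$.
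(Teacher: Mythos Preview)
The paper does not prove this theorem at all: it is stated with the attribution \texttt{[\textbackslash cite\{he\}]} and appears in the ``Concluding remarks'' section as a known result quoted from He--Xue--Mei--Yabuta (2015), with no accompanying argument. There is therefore nothing in the present paper to compare your proposal against.

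That said, your outline is essentially the standard route taken in the cited literature (\cite{shi}, \cite{xueqing}, \cite{he}): the dyadic-shell pointwise domination of $g^{\ast\ast}_\lambda$ by a weighted $\ell^2$-sum of aperture-dilated area integrals $S'_{2^k}$, a change-of-aperture bound $\|S'_a\|\lesssim a^{\beta}\|S'_1\|$, and the interpretation of $\vec f\mapsto\{\mathcal G_t(\vec f)\}_t$ as an $L^2(dt/t)$-valued multilinear Calder\'on--Zygmund operator so that the Grafakos--Torres machinery delivers the full range of strong/weak bounds from the assumed initial boundedness. One point to be more careful about: the exponent $\beta$ in the change-of-aperture estimate depends on the target exponent $p$ (not just on $m,n$), and for the weak endpoint $p=1/m$ the growth is $a^{mn}$ in the relevant quasi-norm, so the summability condition becomes $\lambda n>2mn$, i.e.\ $\lambda>2m$, which is indeed the hypothesis; your parenthetical ``$\beta=\beta(m,n,p)$'' is right but the sentence ``which is exactly what the hypothesis $\lambda>2m$ secures'' should be backed by the explicit computation at the endpoint. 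Also, in this non-convolution setting the kernel size condition reads $|\mathcal K(x,y_1,\dots,y_m)|\lesssim(1+\sum_j|y_j|)^{-mn-\delta}$ (not $(\sum_j|x-y_j|)^{-mn-\delta}$), so after the dilation $\mathcal K_t$ the $\mathcal H$-norm bound you wrote, $\|\mathbf K(x,\vec y)\|_{\mathcal H}\lesssim(\sum_j|y_j|)^{-mn}$, is not quite the Calder\'on--Zygmund size condition centered at the diagonal; in the cited papers this is handled by working directly with the scaled kernel and the extra $\delta$ of decay rather than by a literal vector-valued CZ kernel estimate.
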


By using similar arguments, we can see that all the BMO--BLO results derived above are also true for the multilinear operators $g'$, $S'$ and $g^{\ast\ast}_{\lambda}$. The details are omitted here.

\begin{thm}\label{mainthm21}
For any $\vec{f}=(f_1,f_2,\dots,f_m)\in [\mathrm{BMO}(\mathbb R^n)]^m$ and $2\leq m\in \mathbb{N}$, then $g'(\vec{f})$ is either infinite everywhere or finite almost everywhere, and in the latter case, we then have
\begin{equation*}
\big\|\big[g'(\vec{f})\big]^2\big\|_{\mathrm{BLO}}\lesssim\prod_{i=1}^m\big\|f_i\big\|^2_{\mathrm{BMO}},
\end{equation*}
and hence
\begin{equation*}
\big\|g'(\vec{f})\big\|_{\mathrm{BLO}}\lesssim\prod_{i=1}^m\big\|f_i\big\|_{\mathrm{BMO}}.
\end{equation*}
\end{thm}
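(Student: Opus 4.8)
The plan is to run the proof of Theorem~\ref{mainthm1} essentially verbatim, replacing the convolution kernel $\mathcal{K}_t(x-y_1,\dots,x-y_m)$ by the non-convolution kernel $\mathcal{K}_t(x,y_1,\dots,y_m)=t^{-mn}\mathcal{K}(x/t,y_1/t,\dots,y_m/t)$ and using the strong/weak-type estimates for $g',S'$ recorded above (the non-convolution analogue of Theorem~\ref{gs1}) in place of Theorem~\ref{gs1}. First I would observe that the dichotomy ``infinite everywhere or finite a.e.'' is obtained exactly as in \cite{he}, so it remains to show that for every $\vec f\in[\mathrm{BMO}(\mathbb R^n)]^m$ and every ball $\mathcal{B}=B(x_0,r)$,
\begin{equation*}
\frac{1}{m(\mathcal{B})}\int_{\mathcal{B}}\Big[\big[g'(\vec f)(x)\big]^2-\underset{y\in\mathcal{B}}{\mathrm{ess\,inf}}\,\big[g'(\vec f)(y)\big]^2\Big]\,dx\lesssim\prod_{i=1}^m\big\|f_i\big\|_{\mathrm{BMO}}^2,
\end{equation*}
since the estimate for $g'(\vec f)$ itself then follows from \eqref{BLOsquare}. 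As in Theorem~\ref{mainthm1} I would split $[g'(\vec f)(x)]^2=[g'_0(\vec f)(x)]^2+[g'_\infty(\vec f)(x)]^2$ according to $0<t\le r$ and $t>r$, and use \eqref{essinf} to bound the left-hand side by $I_0+I_\infty$, where $I_0$ is the average of $[g'_0(\vec f)(x)]^2$ over $\mathcal{B}$ and $I_\infty$ is the average over $x\in\mathcal{B}$ of $\mathrm{ess\,sup}_{y\in\mathcal{B}}\big|[g'_\infty(\vec f)(x)]^2-[g'_\infty(\vec f)(y)]^2\big|$.

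For $I_0$ I would decompose each $f_i=f_i^1+f_i^2+f_i^3$ relative to $2\mathcal{B}$ as in \eqref{sum123}; by the vanishing condition of $\mathcal{K}$ every term with some $\alpha_j=1$ drops out. The term $I_0^{2,\dots,2}$ is bounded by $\frac{1}{m(\mathcal{B})}\big\|g'(f_1^2,\dots,f_m^2)\big\|_{L^2}^2$, hence by the $L^{2m}\times\cdots\times L^{2m}\to L^2$ boundedness of $g'$ together with part~(1) of Lemma~\ref{BMOp}, exactly as before. For the remaining terms (at least one $\alpha_j=3$) I would use the size condition, which under the dilation gives $|\mathcal{K}_t(x,y_1,\dots,y_m)|\lesssim t^\delta/(t+\sum_j|x-y_j|)^{mn+\delta}$; decomposing the far region into the dyadic shells as in \eqref{equaw1}, using $|x-y|\approx|x_0-y|$ for $x\in\mathcal{B}$ and $y\in 2^{j+1}\mathcal{B}\setminus 2^j\mathcal{B}$, and applying part~(2) of Lemma~\ref{BMOp}, one arrives at $|\mathcal{G}_t(f_1^{\alpha_1},\dots,f_m^{\alpha_m})(x)|\lesssim t^\delta\sum_{j\ge1}j^{\ell}\,m(2^j\mathcal{B})^{-\delta/n}\prod_k\|f_k\|_{\mathrm{BMO}}$; then $\int_0^r t^{2\delta-1}\,dt\lesssim r^{2\delta}$ and the convergence of $\sum_j j^{\ell}2^{-j\delta}$ yield $I_0^{\alpha_1,\dots,\alpha_m}\lesssim\prod_k\|f_k\|_{\mathrm{BMO}}^2$, the case $\alpha_1=\cdots=\alpha_m=3$ being identical with $j^{\ell}$ replaced by $j^m$.

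For $I_\infty$ I would first establish the uniform bound $|\mathcal{G}_t(\vec f)(x)|\lesssim\prod_i\|f_i\|_{\mathrm{BMO}}$ for $2^kr<t\le 2^{k+1}r$, $x\in\mathcal{B}$, by subtracting the constants $(f_i)_{2^{k+1}\mathcal{B}}$ and using the vanishing condition with the trivial bound $|\mathcal{K}_t(x,\cdot)|\lesssim t^{-mn}$ on $(2^{k+1}\mathcal{B})^m$ and the size decay on the outer shells, as in \eqref{firstsecond}. Then, from $\big|[g'_\infty(\vec f)(x)]^2-[g'_\infty(\vec f)(y)]^2\big|\le\int_r^\infty\big(|\mathcal{G}_t(\vec f)(x)|+|\mathcal{G}_t(\vec f)(y)|\big)\big|\mathcal{G}_t(\vec f)(x)-\mathcal{G}_t(\vec f)(y)\big|\,dt/t$, I would bound the first factor by the uniform estimate, and the difference by the smoothness condition of the non-convolution kernel \emph{in the $x$-variable}, namely $|\mathcal{K}_t(x,\vec y)-\mathcal{K}_t(y,\vec y)|\lesssim t^\delta|x-y|^\gamma/(t+\sum_i|x-y_i|)^{mn+\delta+\gamma}$, which is applicable on the far region since there $\max_i|x-y_i|\gtrsim r\gtrsim|x-y|$. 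Splitting into $(2\mathcal{B})^m$ (handled again by $|\mathcal{K}_t|\lesssim t^{-mn}$) and the dyadic shells, interchanging the order of integration, and computing $\int_r^\infty t^{\delta-1}(t+|x_0-y_k|)^{-(mn+\delta+\gamma)}\,dt\lesssim|x_0-y_k|^{-(mn+\gamma)}$ together with part~(2) of Lemma~\ref{BMOp} gives $\int_r^\infty|\mathcal{G}_t(\vec f)(x)-\mathcal{G}_t(\vec f)(y)|\,dt/t\lesssim\prod_i\|f_i\|_{\mathrm{BMO}}$; Cauchy's inequality then produces $I_\infty\lesssim\prod_i\|f_i\|_{\mathrm{BMO}}^2$. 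Combining the bounds for $I_0$ and $I_\infty$ gives the displayed inequality.

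The only place where the non-convolution case genuinely differs from Theorem~\ref{mainthm1} is bookkeeping: one must check that under $\mathcal{K}_t(x,\vec y)=t^{-mn}\mathcal{K}(x/t,\vec y/t)$ the size and $x$-smoothness hypotheses produce exactly the factors $t^\delta/(t+\sum_j|x-y_j|)^{mn+\delta}$ and $t^\delta|x-y|^\gamma/(t+\sum_j|x-y_j|)^{mn+\delta+\gamma}$ used above, that the $y_i$-smoothness condition is not needed anywhere, and that the geometric comparisons $|x-y|\approx|x_0-y|$ on the shells and $t+\sum_i|x-y_i|\approx t+\sum_i|x_0-y_i|$ on the far region (for $t>r$) continue to hold --- which they do, since these depend only on $x,y\in\mathcal{B}$ and on the location of the $y_i$. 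With these verifications the entire argument transfers, and $\|g'(\vec f)\|_{\mathrm{BLO}}\lesssim\prod_i\|f_i\|_{\mathrm{BMO}}$ follows from \eqref{BLOsquare}. The analogous adaptations of Theorems~\ref{mainthm2} and \ref{mainthm3} for $S'$ and $g^{\ast\ast}_\lambda$ are obtained in the same way.
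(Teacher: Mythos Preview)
Your proposal is correct and follows exactly the approach the paper indicates: the paper does not give a separate proof of Theorem~\ref{mainthm21} but simply states that ``by using similar arguments'' the proof of Theorem~\ref{mainthm1} carries over to the non-convolution kernel, which is precisely what you have carefully outlined. Your identification of the one genuine change---invoking the $x$-variable smoothness condition of the non-convolution kernel in place of \eqref{regularity} for the $I_\infty$ term---is accurate and is the only point that requires care.
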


\begin{thm}\label{mainthm22}
For any $\vec{f}=(f_1,f_2,\dots,f_m)\in [\mathrm{BMO}(\mathbb R^n)]^{m}$ and $2\leq m\in \mathbb{N}$, then $S'(\vec{f})$ is either infinite everywhere or finite almost everywhere, and in the latter case, we then have
\begin{equation*}
\big\|\big[S'(\vec{f})\big]^2\big\|_{\mathrm{BLO}}\lesssim\prod_{i=1}^m\big\|f_i\big\|^2_{\mathrm{BMO}},
\end{equation*}
and hence
\begin{equation*}
\big\|S'(\vec{f})\big\|_{\mathrm{BLO}}\lesssim\prod_{i=1}^m\big\|f_i\big\|_{\mathrm{BMO}}.
\end{equation*}
\end{thm}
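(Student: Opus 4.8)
The plan is to run the proof of Theorem~\ref{mainthm2} essentially verbatim, with the convolution kernel $\mathcal K(x-y_1,\dots,x-y_m)$ replaced by the non-convolution kernel $K(x,y_1,\dots,y_m)$ and its size and ($x$-variable) smoothness conditions. First, the dichotomy that $S'(\vec f)$ is either infinite everywhere or finite almost everywhere, together with a.e.\ finiteness under one-point finiteness, is obtained exactly as in \cite{he} and as recorded for the convolution case at the beginning of Section~4, so it remains to prove the quantitative bound $\big\|[S'(\vec f)]^2\big\|_{\mathrm{BLO}}\lesssim\prod_{i=1}^m\|f_i\|_{\mathrm{BMO}}^2$; the estimate $\|S'(\vec f)\|_{\mathrm{BLO}}\lesssim\prod_{i=1}^m\|f_i\|_{\mathrm{BMO}}$ is then immediate from \eqref{BLOsquare}.

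By the definition of $\mathrm{BLO}(\mathbb R^n)$ it suffices to bound, for an arbitrary ball $\mathcal B=B(x_0,r)$, the average over $\mathcal B$ of $[S'(\vec f)(x)]^2-\underset{y\in\mathcal B}{\mathrm{ess\,inf}}\,[S'(\vec f)(y)]^2$. Splitting $[S'(\vec f)(x)]^2=[S'_0(\vec f)(x)]^2+[S'_\infty(\vec f)(x)]^2$ at the height $t=r$ and applying \eqref{essinf}, this average is dominated by $J_0+J_\infty$, with $J_0=\frac1{m(\mathcal B)}\int_{\mathcal B}[S'_0(\vec f)(x)]^2\,dx$ and $J_\infty=\frac1{m(\mathcal B)}\int_{\mathcal B}\underset{y\in\mathcal B}{\mathrm{ess\,sup}}\,\big|[S'_\infty(\vec f)(x)]^2-[S'_\infty(\vec f)(y)]^2\big|\,dx$. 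For $J_0$ I would decompose each $f_i$ relative to $4\mathcal B$ as $f_i=f_i^1+f_i^2+f_i^3$: the vanishing condition of $K$ annihilates every term containing some $f_j^1$; the all-$f^2$ term is controlled by the $L^{2m}\times\cdots\times L^{2m}\to L^2$ boundedness of $S'$ (the case $p_i=2m$ of the $L^p$-boundedness theorem for $g',S'$ quoted above) together with Lemma~\ref{BMOp}(1); and the remaining terms are estimated through the size condition of $K$, the geometric decompositions \eqref{equaw1} and \eqref{equaw2}, the localization \eqref{factw2}, and Lemma~\ref{BMOp}(2), leading to convergent series of the type $\sum_{j\ge1} j^{\ell}2^{-j\delta}$. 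For $J_\infty$ I would first prove the pointwise bound $|\mathcal G_t(\vec f)(x+z)|\lesssim\prod_{i=1}^m\|f_i\|_{\mathrm{BMO}}$ for $|z|<t$, $t>r$ (the analogue of \eqref{keyestiw5}), using only the vanishing and size conditions of $K$; then, via $|a^2-b^2|=|a+b|\,|a-b|$, reduce $J_\infty$ to $\int_r^\infty\!\int_{|z|<t}\big|\mathcal G_t(\vec f)(x+z)-\mathcal G_t(\vec f)(y+z)\big|\,\frac{dz\,dt}{t^{n+1}}$, which I would handle with the rescaled $x$-variable smoothness estimate for $\mathcal K_t$ (the analogue of \eqref{regularity3}) and the elementary bound $\int_r^\infty t^{\delta-1}(t+s)^{-mn-\delta-\gamma}\,dt\lesssim s^{-mn-\gamma}$, again ending in a convergent series $\sum_{j\ge1}j^m2^{-(j-1)\gamma}$. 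Adding $J_0$ and $J_\infty$ yields the claim.

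The only genuine departure from the convolution argument, and the point I expect to require the most care, is verifying the rescaled forms of the kernel conditions for $\mathcal K_t(x,y_1,\dots,y_m)=t^{-mn}K(x/t,y_1/t,\dots,y_m/t)$: the size bound $|\mathcal K_t(x,\vec y)|\lesssim t^{\delta}(t+\sum_j|x-y_j|)^{-mn-\delta}$ on the integration regions that occur, and the smoothness bound $|\mathcal K_t(x,\vec y)-\mathcal K_t(x',\vec y)|\lesssim t^{\delta}|x-x'|^{\gamma}(t+\sum_j|x-y_j|)^{-mn-\delta-\gamma}$ whenever $2|x-x'|\le\max_j|x-y_j|$, the latter coming from the $x$-regularity half of the smoothness condition of $K$ — a hypothesis that was automatic from translation invariance in the convolution case. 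Once these are in hand, the same substitutions turn the proofs of Theorems~\ref{mainthm1} and \ref{mainthm3} into proofs of Theorem~\ref{mainthm21} and, for $g^{\ast\ast}_\lambda$ with $\lambda>3m+(2\delta+2\gamma)/n$, of the corresponding $\mathrm{BLO}$ bound, so no further ideas are needed beyond this bookkeeping.
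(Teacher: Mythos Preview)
Your proposal is correct and matches the paper's own treatment: the paper does not give a separate proof of Theorem~\ref{mainthm22} but simply asserts that ``by using similar arguments'' the proof of Theorem~\ref{mainthm2} carries over to the non-convolution kernels, with details omitted. You have accurately identified the one genuinely new ingredient---the $x$-variable smoothness hypothesis on $K$, which replaces what translation invariance gave for free in the convolution case---and otherwise your outline reproduces the structure of the proof of Theorem~\ref{mainthm2} step for step.
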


\begin{thm}\label{mainthm23}
Suppose that $\lambda>3m+{(2\delta+2\gamma)}/n$ and $\delta,\gamma>0$. For any $\vec{f}=(f_1,f_2,\dots,f_m)\in[\mathrm{BMO}(\mathbb R^n)]^{m}$ and $2\leq m\in \mathbb{N}$, then $g^{\ast\ast}_{\lambda}(\vec{f})$ is either infinite everywhere or finite almost everywhere, and in the latter case, we have
\begin{equation*}
\big\|\big[g^{\ast\ast}_{\lambda}(\vec{f})\big]^2\big\|_{\mathrm{BLO}}\lesssim\prod_{i=1}^m\big\|f_i\big\|^2_{\mathrm{BMO}},
\end{equation*}
and hence
\begin{equation*}
\big\|g^{\ast\ast}_{\lambda}(\vec{f})\big\|_{\mathrm{BLO}}\lesssim\prod_{i=1}^m\big\|f_i\big\|_{\mathrm{BMO}}.
\end{equation*}
\end{thm}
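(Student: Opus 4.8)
The plan is to run the scheme of the proofs of Theorems \ref{mainthm1} and \ref{mainthm2}, and of the convolution-type $g^{\ast}_{\lambda}$-result Theorem \ref{mainthm3}, with Theorem \ref{he} as the quantitative $L^p$ input and with the non-convolution size and smoothness conditions in place of their convolution analogues. As in \cite{he}, one-point finiteness of $g^{\ast\ast}_{\lambda}(\vec f)$ propagates to finiteness almost everywhere (otherwise $g^{\ast\ast}_{\lambda}(\vec f)\equiv+\infty$), so it suffices to assume $g^{\ast\ast}_{\lambda}(\vec f)(x_0)<+\infty$ and, for an arbitrary ball $\mathcal B=B(x_0,r)$, to bound
\begin{equation*}
\frac{1}{m(\mathcal B)}\int_{\mathcal B}\Big[\big[g^{\ast\ast}_{\lambda}(\vec f)(x)\big]^2-\underset{y\in\mathcal B}{\mathrm{ess\,inf}}\,\big[g^{\ast\ast}_{\lambda}(\vec f)(y)\big]^2\Big]\,dx
\end{equation*}
by $\prod_{i=1}^m\|f_i\|^2_{\mathrm{BMO}}$. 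Splitting $[g^{\ast\ast}_{\lambda}(\vec f)]^2=[g^{\ast\ast}_{\lambda,0}(\vec f)]^2+[g^{\ast\ast}_{\lambda,\infty}(\vec f)]^2$ according to $0<t\le r$ and $t>r$, the bound \eqref{essinf} reduces matters to $I_0+I_\infty$, where $I_0$ is the $\mathcal B$-average of $[g^{\ast\ast}_{\lambda,0}(\vec f)]^2$ and $I_\infty$ is the $\mathcal B$-average of $\underset{y\in\mathcal B}{\mathrm{ess\,sup}}\,\big|[g^{\ast\ast}_{\lambda,\infty}(\vec f)(x)]^2-[g^{\ast\ast}_{\lambda,\infty}(\vec f)(y)]^2\big|$.

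For $I_0$ I would decompose each $f_i=f_i^1+f_i^2+f_i^3$ relative to a fixed dilate of $\mathcal B$ as in \eqref{sum123}, discard every multi-index with some $\alpha_j=1$ by the vanishing condition of the kernel, and bound the all-$2$ term by the boundedness $g^{\ast\ast}_{\lambda}\colon L^{2m}\times\cdots\times L^{2m}\to L^2$ (a legitimate instance of Theorem \ref{he}, as $\lambda>3m+(2\delta+2\gamma)/n>2m$) together with Lemma \ref{BMOp}(1), exactly as for $I^{2,\dots,2}_0$ in Theorem \ref{mainthm1}. For each remaining multi-index I would estimate $\big|\mathcal G_t(f_1^{\alpha_1},\dots,f_m^{\alpha_m})(z)\big|$ via the size condition; the novelty over Theorems \ref{mainthm1}--\ref{mainthm2} is that $z$ is no longer confined to a cone, so in
\begin{equation*}
\int_0^{r}\!\!\int_{\mathbb R^n}\Big(\frac{t}{t+|x-z|}\Big)^{\lambda n}\big|\mathcal G_t(f_1^{\alpha_1},\dots,f_m^{\alpha_m})(z)\big|^2\,\frac{dz\,dt}{t^{n+1}}
\end{equation*}
I would split the $z$-integral into the central ball and the dyadic annuli $2^{k+1}\mathcal B\setminus 2^k\mathcal B$, on which $(t/(t+|x-z|))^{\lambda n}\lesssim(t/2^kr)^{\lambda n}$ when $t\le r$; combining this with the pointwise bounds on $\mathcal G_t$ (which, as in Theorem \ref{mainthm1}, produce factors $t^{\delta}\big(\sum_{j\ge1}j^{\ell}\,m(2^j\mathcal B)^{-\delta/n}\big)\prod_k\|f_k\|_{\mathrm{BMO}}$ with $1\le\ell\le m$ via Lemma \ref{BMOp}(2)) and the $t$-integration over $(0,r]$, all the resulting geometric series in $j$ and $k$ converge precisely under the hypothesis $\lambda>3m+(2\delta+2\gamma)/n$, giving $I_0\lesssim\prod_{i=1}^m\|f_i\|^2_{\mathrm{BMO}}$.

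For $I_\infty$ I would first establish, as in the proof of Theorem \ref{mainthm2} (cf. \eqref{keyestiw5}), the uniform bound $|\mathcal G_t(\vec f)(z)|\lesssim\prod_{i=1}^m\|f_i\|_{\mathrm{BMO}}$ for all $z\in\mathbb R^n$ and $t>r$, using the vanishing and size conditions and Lemma \ref{BMOp}(2) relative to a ball of radius comparable to $t$; note that, unlike for $g$ or $S$, this bound by itself does not make $g^{\ast\ast}_{\lambda,\infty}$ converge, so the comparison of $x$ and $y$ is essential. Next, using the $x$-variable smoothness of the non-convolution kernel (the first regularity estimate in its definition is not needed here), I would bound $|\mathcal G_t(\vec f)(x+z)-\mathcal G_t(\vec f)(y+z)|$ for $x,y\in\mathcal B$ by $t^{\delta}|x-y|^{\gamma}$ against a kernel of order $mn+\delta+\gamma$, integrate this against the $f_i$'s over a fixed dilate of $\mathcal B$ and over the dyadic annuli, and carry out the $(z,t)$-integration against the weight to obtain
\begin{equation*}
\int_{r}^{\infty}\!\!\int_{\mathbb R^n}\Big(\frac{t}{t+|z|}\Big)^{\lambda n}\big|\mathcal G_t(\vec f)(x+z)-\mathcal G_t(\vec f)(y+z)\big|\,\frac{dz\,dt}{t^{n+1}}\lesssim\prod_{i=1}^m\|f_i\|_{\mathrm{BMO}},
\end{equation*}
the convergence again resting on $\lambda>3m+(2\delta+2\gamma)/n$. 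Writing $|a^2-b^2|=|a+b|\,|a-b|$, combining the two displays and applying Cauchy's inequality gives $\big|[g^{\ast\ast}_{\lambda,\infty}(\vec f)(x)]^2-[g^{\ast\ast}_{\lambda,\infty}(\vec f)(y)]^2\big|\lesssim\prod_{i=1}^m\|f_i\|^2_{\mathrm{BMO}}$, hence $I_\infty\lesssim\prod_{i=1}^m\|f_i\|^2_{\mathrm{BMO}}$. Adding the $I_0$ and $I_\infty$ estimates yields the claimed bound for $\|[g^{\ast\ast}_{\lambda}(\vec f)]^2\|_{\mathrm{BLO}}$, and $\|g^{\ast\ast}_{\lambda}(\vec f)\|_{\mathrm{BLO}}\lesssim\prod_{i=1}^m\|f_i\|_{\mathrm{BMO}}$ then follows from \eqref{BLOsquare}.

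The main obstacle will be the bookkeeping of the multi-indexed sums in both $I_0$ and $I_\infty$: on each dyadic annulus for $z$ and each dyadic block for $t$ one must balance the decay supplied by $(t/(t+|x-z|))^{\lambda n}$ against the growth coming from the volumes $m(2^j\mathcal B)$ and the polynomial-in-$j$ factors produced by Lemma \ref{BMOp}(2); requiring all of these geometric series to converge is exactly what forces the threshold $\lambda>3m+(2\delta+2\gamma)/n$ — the very same threshold appearing in the convolution-type statement Theorem \ref{mainthm3}. Beyond this, no new analytic ingredient is needed: the argument is structurally identical to those of Theorems \ref{mainthm1}, \ref{mainthm2} and \ref{mainthm3}, with the non-convolution size and smoothness conditions substituted for their convolution counterparts.
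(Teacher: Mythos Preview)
Your proposal is correct and follows precisely the route the paper indicates: the paper does not supply a separate proof of Theorem \ref{mainthm23} but states that ``by using similar arguments'' as in Theorems \ref{mainthm1}, \ref{mainthm2} and \ref{mainthm3}, with the non-convolution kernel conditions and Theorem \ref{he} as the $L^p$ input, the result follows. Your outline fills in exactly those details --- the $t\le r$/$t>r$ split, the decomposition \eqref{sum123} with the vanishing condition killing the $\alpha_j=1$ terms, the $L^{2m}\times\cdots\times L^{2m}\to L^2$ bound for the all-$2$ term, the dyadic-annulus handling of the $z$-variable against the weight $(t/(t+|x-z|))^{\lambda n}$, the uniform estimate \eqref{keyestiw5}, the $x$-smoothness of the kernel for the difference $\mathcal G_t(\vec f)(x+z)-\mathcal G_t(\vec f)(y+z)$, and the final appeal to \eqref{BLOsquare} --- so there is nothing to add.
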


Concerning the $L^{\infty}$--BLO estimates for multilinear Littlewood--Paley operators with non-convolution type kernels, we have the following
results. 
\begin{thm}
For any $\vec{f}=(f_1,f_2,\dots,f_m)\in [L^{\infty}(\mathbb R^n)]^{m}$ and $2\leq m\in \mathbb{N}$, then $g'(\vec{f})$ is finite everywhere,
\begin{equation*}
\big\|\big[g'(\vec{f})\big]^2\big\|_{\mathrm{BLO}}\lesssim\prod_{i=1}^m\big\|f_i\big\|^2_{L^{\infty}},
\end{equation*}
and hence
\begin{equation*}
\big\|g'(\vec{f})\big\|_{\mathrm{BLO}}\lesssim\prod_{i=1}^m\big\|f_i\big\|_{L^{\infty}}.
\end{equation*}
\end{thm}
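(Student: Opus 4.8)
The plan is to deduce the estimate from the $\mathrm{BMO}$--$\mathrm{BLO}$ result for $g'$ already recorded in Theorem~\ref{mainthm21}, using only the elementary embedding $L^\infty(\mathbb R^n)\subset\mathrm{BMO}(\mathbb R^n)$. Since \eqref{relation11} gives $\|f_i\|_{\mathrm{BMO}}\le 2\|f_i\|_{L^\infty}$ for every $i$, the tuple $\vec f$ belongs to $[\mathrm{BMO}(\mathbb R^n)]^m$, so Theorem~\ref{mainthm21} supplies the dichotomy for $g'(\vec f)$ and, in the finite branch,
\[
\big\|\big[g'(\vec f)\big]^2\big\|_{\mathrm{BLO}}\lesssim\prod_{i=1}^m\|f_i\|^2_{\mathrm{BMO}}\lesssim\prod_{i=1}^m\|f_i\|^2_{L^\infty},
\]
whence also $\|g'(\vec f)\|_{\mathrm{BLO}}\lesssim\prod_{i=1}^m\|f_i\|_{L^\infty}$ by \eqref{BLOsquare}. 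Thus the only point that genuinely exploits $L^\infty$ rather than $\mathrm{BMO}$ membership is the upgrade from ``finite almost everywhere'' (which is automatic, a $\mathrm{BLO}$ function being locally integrable and hence finite a.e.) to ``finite everywhere''; this is the branch of the dichotomy that applies in the present situation.

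To prove finiteness at an arbitrary $x_0\in\mathbb R^n$ I would argue pointwise, in the spirit of \cite{leck} and \cite{sun}: fix $\mathcal B=B(x_0,1)$ and split $\big[g'(\vec f)(x_0)\big]^2=\int_0^{1}\big|\mathcal G_t(\vec f)(x_0)\big|^2\,dt/t+\int_{1}^{\infty}\big|\mathcal G_t(\vec f)(x_0)\big|^2\,dt/t$. For the part over $0<t<1$, decompose each $f_i$ around $2\mathcal B$ into its mean, its local oscillation and its far part exactly as in \eqref{sum123}; the vanishing condition of the kernel annihilates every term containing a mean factor, every term with at least one far factor carries a genuine gain $t^{\delta}$ and is summed over the dyadic annuli of $x_0$ by the estimates used in the proof of Theorem~\ref{mainthm1} (so that, after $\int_0^1 t^{2\delta-1}\,dt<\infty$, it is $\lesssim\prod_i\|f_i\|_{L^\infty}$), while the purely local term $g'_0(f_1^2,\dots,f_m^2)(x_0)$ is finite because each $f_i^2\in L^\infty$ has compact support, hence lies in $L^{2m}(\mathbb R^n)$, and $g'$ is bounded from $L^{2m}\times\cdots\times L^{2m}$ into $L^2$. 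For the part over $t>1$ one repeats, for the non-convolution kernel, the argument that produced \eqref{keyestiw1} and \eqref{keyestiw5}. Combined with Theorem~\ref{mainthm21}, this shows that $g'(\vec f)$ is finite everywhere and completes the proof.

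I expect this last step to be the only real obstacle: the $L^{2m}\to L^2$ bound makes the purely local term finite merely for almost every $x_0$, so to obtain finiteness at \emph{every} point one must put all three pieces of the decomposition of each $f_i$ through the pointwise dyadic estimates with careful bookkeeping of the powers of $t$, ensuring that both $\int_0^1$ and $\int_1^\infty$ converge absolutely; this is precisely the refinement carried out in the linear case in \cite{leck} and \cite{sun}, and transporting it to the $m$-linear non-convolution framework is where the work lies. By contrast, the quantitative $\mathrm{BLO}$ bound itself needs no new idea, being a direct consequence of Theorem~\ref{mainthm21} together with \eqref{relation11}. The analogous statements for $S'$ and $g^{\ast\ast}_{\lambda}$ follow in the same way, decomposing around $4\mathcal B$ and carrying the weight $\big(t/(t+|x-z|)\big)^{\lambda n}$ along as in Theorems~\ref{mainthm2} and \ref{mainthm3}.
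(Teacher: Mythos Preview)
Your approach coincides with the paper's: it gives no proof for this theorem, only remarking (before Theorem~\ref{mainthm11} and again in Section~5) that the $L^\infty$--$\mathrm{BLO}$ bound follows from the $\mathrm{BMO}$--$\mathrm{BLO}$ result ``in view of the relation \eqref{relation11}'' and that everywhere-finiteness follows from the arguments in \cite{he}, \cite{leck}, \cite{sun}. Two small points: \eqref{relation11} is the inequality $\|f\|_{\mathrm{BLO}}\le 2\|f\|_{L^\infty}$, not the $\mathrm{BMO}$ bound you invoke (the latter is of course equally elementary, or follows from \eqref{relation11} and \eqref{bmoblo}); and the argument producing \eqref{keyestiw1} yields only a uniform-in-$t$ bound $|\mathcal G_t(\vec f)(x_0)|\lesssim\prod_i\|f_i\|_{L^\infty}$, which by itself is not enough to make $\int_1^\infty|\mathcal G_t(\vec f)(x_0)|^2\,dt/t$ converge, so your closing caveat that both tails require further bookkeeping is well placed---and, as you note, neither you nor the paper actually writes this step out.
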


\begin{thm}
For any $\vec{f}=(f_1,f_2,\dots,f_m)\in [L^{\infty}(\mathbb R^n)]^{m}$ and $2\leq m\in \mathbb{N}$, then $S'(\vec{f})$ is finite everywhere, 
\begin{equation*}
\big\|\big[S'(\vec{f})\big]^2\big\|_{\mathrm{BLO}}\lesssim\prod_{i=1}^m\big\|f_i\big\|^2_{L^{\infty}},
\end{equation*}
and hence
\begin{equation*}
\big\|S'(\vec{f})\big\|_{\mathrm{BLO}}\lesssim\prod_{i=1}^m\big\|f_i\big\|_{L^{\infty}}.
\end{equation*}
\end{thm}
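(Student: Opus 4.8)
The plan is to obtain this statement as the $L^{\infty}$ specialization of the $\mathrm{BMO}$--$\mathrm{BLO}$ bound for the non-convolution area integral, Theorem \ref{mainthm22}, whose proof in turn mirrors that of Theorem \ref{mainthm2}. First I would record that, by \eqref{relation11} and \eqref{bmoblo}, the hypothesis $\vec{f}=(f_1,\dots,f_m)\in[L^{\infty}(\mathbb R^n)]^{m}$ forces $f_i\in\mathrm{BMO}(\mathbb R^n)$ with $\|f_i\|_{\mathrm{BMO}}\le 2\|f_i\|_{\mathrm{BLO}}\le 4\|f_i\|_{L^{\infty}}$ for each $i$; consequently Theorem \ref{mainthm22} already yields $\big\|[S'(\vec f)]^{2}\big\|_{\mathrm{BLO}}\lesssim\prod_{i=1}^{m}\|f_i\|_{\mathrm{BMO}}^{2}\lesssim\prod_{i=1}^{m}\|f_i\|_{L^{\infty}}^{2}$, and the ``hence'' part follows from \eqref{BLOsquare}. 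In particular $[S'(\vec f)]^{2}\in\mathrm{BLO}(\mathbb R^n)\subset L^{1}_{\mathrm{loc}}(\mathbb R^n)$, so $S'(\vec f)<\infty$ almost everywhere, which already excludes the alternative $S'(\vec f)\equiv\infty$; the upgrade to finiteness at \emph{every} point is obtained exactly as in \cite{he,leck,sun}, by decomposing each $f_i$ about a ball centered at the given point, treating the compactly supported pieces with the $L^{p}$ theory of $S'$ and controlling the tails through the size condition of the kernel, and I would not reproduce that argument here.

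To make the dependence on $\prod_{i}\|f_i\|_{L^{\infty}}$ self-contained, the alternative I would actually write out is to re-run the proof of Theorem \ref{mainthm2} verbatim --- with the non-convolution modifications already used for Theorem \ref{mainthm22} --- but with every application of Lemma \ref{BMOp} replaced by the pointwise bound $|f_i(y)-(f_i)_{\mathcal B}|\le 2\|f_i\|_{L^{\infty}}$, valid for a.e.\ $y$ and every ball $\mathcal B$. Thus: split $[S'(\vec f)(x)]^{2}=[S'_{0}(\vec f)(x)]^{2}+[S'_{\infty}(\vec f)(x)]^{2}$ at the scale $t=r$ of the fixed ball $\mathcal B=B(x_{0},r)$; for the term $J_{0}$ decompose $f_i=f_i^{1}+f_i^{2}+f_i^{3}$ about $4\mathcal B$, kill the contributions containing some $f_i^{1}$ by the vanishing condition, estimate the all-$f_i^{2}$ term by the boundedness of $S'$ from $L^{2m}\times\cdots\times L^{2m}$ into $L^{2}$ (Theorem \ref{he}, convolution kernels being a special case) together with $\|f_i^{2}\|_{L^{2m}}\le 2\|f_i\|_{L^{\infty}}\,m(4\mathcal B)^{1/(2m)}$, and bound the terms containing some $f_i^{3}$ using the size condition of the non-convolution kernel; for $J_{\infty}$ use, as in \eqref{keyestiw5}, the uniform bound $|\mathcal G_{t}(\vec f)(x+z)|\lesssim\prod_{i}\|f_i\|_{L^{\infty}}$ for $t>r$, $|z|<t$, and then control $\big|[S'_{\infty}(\vec f)(x)]^{2}-[S'_{\infty}(\vec f)(y)]^{2}\big|$ through $|\mathcal G_{t}(\vec f)(x+z)-\mathcal G_{t}(\vec f)(y+z)|$ via the smoothness of the non-convolution kernel in its \emph{first} variable. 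A pleasant simplification over the BMO case is that $|f_i(y)-(f_i)_{\mathcal B}|\le 2\|f_i\|_{L^{\infty}}$ removes the logarithmic factors $Ck\|f_i\|_{\mathrm{BMO}}$ coming from Lemma \ref{BMOp}(2), so the annular sums become plain geometric series $\sum_{j}2^{-j\delta}$ and $\sum_{j}2^{-(j-1)\gamma}$ without the polynomial weights $j^{\ell}$, $j^{m}$, and converge trivially; collecting $J_{0}\lesssim\prod_{i}\|f_i\|_{L^{\infty}}^{2}$ and $J_{\infty}\lesssim\prod_{i}\|f_i\|_{L^{\infty}}^{2}$ gives the asserted bound.

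The extra $z$-integration over the cone $\{|z|<t\}$ is harmless and is absorbed exactly as in the passage from Theorem \ref{mainthm1} to Theorem \ref{mainthm2}: one uses $|z|<t$ to replace $\sum_{i}|x+z-y_i|$ by $\sum_{i}|x-y_i|$ up to constants, and the factor $t^{n}=m(\{|z|<t\})$ cancels the $t^{-(n+1)}$. The only genuine technical point, precisely as in Theorem \ref{mainthm22}, is that the non-convolution kernel's smoothness must be used in \emph{both} the variables $y_i$ and the first variable $x$ --- it is the first-variable smoothness that makes $|\mathcal G_t(\vec f)(x+z)-\mathcal G_t(\vec f)(y+z)|$ decay fast enough in $t$ to close the $J_{\infty}$ estimate; everything else is a routine transcription of the proof of Theorem \ref{mainthm2}, so the details may be omitted.
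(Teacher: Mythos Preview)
Your approach matches the paper's: the paper gives no proof of this theorem either, merely remarking that everywhere-finiteness follows from the arguments in \cite{he,leck,sun} and that the BLO bound is inherited from the BMO--BLO result (here Theorem~\ref{mainthm22}) via the inclusion $L^{\infty}\subset\mathrm{BMO}$. One small logical wrinkle in your first paragraph: as written it is circular, since you invoke the conclusion of Theorem~\ref{mainthm22} and then use the resulting BLO membership to argue that $S'(\vec f)$ is not identically infinite --- but Theorem~\ref{mainthm22} only yields the BLO bound \emph{after} you know $S'(\vec f)$ is finite at some point; the fix is to run the \cite{he,leck,sun} finiteness argument \emph{first} (as the paper implicitly does), which then places you in the ``latter case'' of Theorem~\ref{mainthm22}, and your self-contained rerun in the second and third paragraphs avoids the issue altogether.
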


\begin{thm}
Assume that $\lambda>3m+{(2\delta+2\gamma)}/n$ and $\gamma,\delta>0$. For any $\vec{f}=(f_1,f_2,\dots,f_m)\in [L^{\infty}(\mathbb R^n)]^{m}$ and $2\leq m\in \mathbb{N}$, then $g^{\ast\ast}_{\lambda}(\vec{f})$ is finite everywhere, 
\begin{equation*}
\big\|\big[g^{\ast\ast}_{\lambda}(\vec{f})\big]^2\big\|_{\mathrm{BLO}}\lesssim\prod_{i=1}^m\big\|f_i\big\|^2_{L^{\infty}},
\end{equation*}
and hence
\begin{equation*}
\big\|g^{\ast\ast}_{\lambda}(\vec{f})\big\|_{\mathrm{BLO}}\lesssim\prod_{i=1}^m\big\|f_i\big\|_{L^{\infty}}.
\end{equation*}
\end{thm}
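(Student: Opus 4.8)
The plan is to obtain this theorem from the BMO--BLO estimate already proved above (Theorem~\ref{mainthm23}; the statement is the non-convolution counterpart of Theorem~\ref{mainthm13}) via the inclusion $L^{\infty}(\mathbb R^n)\subset\mathrm{BMO}(\mathbb R^n)$, and to dispose of the everywhere-finiteness claim separately by the localization device of Leckband~\cite{leck} and Sun~\cite{sun}. Combining \eqref{relation11} with \eqref{bmoblo} gives $\|f\|_{\mathrm{BMO}}\le 4\|f\|_{L^{\infty}}$ for every $f\in L^{\infty}(\mathbb R^n)$, so any $\vec f=(f_1,\dots,f_m)\in[L^{\infty}(\mathbb R^n)]^{m}$ lies in $[\mathrm{BMO}(\mathbb R^n)]^{m}$. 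Once we know that $g^{\ast\ast}_{\lambda}(\vec f)$ is finite almost everywhere (in particular, not infinite everywhere), Theorem~\ref{mainthm23} yields
\[
\big\|[g^{\ast\ast}_{\lambda}(\vec f)]^{2}\big\|_{\mathrm{BLO}}\lesssim\prod_{i=1}^{m}\|f_i\|^{2}_{\mathrm{BMO}}\lesssim\prod_{i=1}^{m}\|f_i\|^{2}_{L^{\infty}},
\]
which is the first displayed estimate, the implicit constant being independent of $\vec f$; the second estimate then follows by applying \eqref{BLOsquare} with $\mathcal F=g^{\ast\ast}_{\lambda}(\vec f)$. Alternatively, one may reprove the BLO inequality directly, copying the proofs of Theorems~\ref{mainthm1},~\ref{mainthm2} and~\ref{mainthm23} but replacing each appeal to part~(2) of Lemma~\ref{BMOp} by the trivial bound $m(2^{k}\mathcal B)^{-1}\int_{2^{k}\mathcal B}|f_i-(f_i)_{\mathcal B}|\,dy\le 2\|f_i\|_{L^{\infty}}$; this deletes all the logarithmic factors $j^{\ell},j^{m}$, so the geometric series converge even more comfortably, and the constraint on $\lambda$ is then used only to handle the weight $\big(t/(t+|x-z|)\big)^{\lambda n}$, exactly as in Theorem~\ref{mainthm23}.

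The only assertion that genuinely goes beyond Theorem~\ref{mainthm23} is that $g^{\ast\ast}_{\lambda}(\vec f)(x)$ is finite at \emph{every} point rather than merely almost everywhere. To prove this I would fix $x\in\mathbb R^n$, put $\mathcal B=B(x,1)$, and run the $0<t\le1$ versus $t>1$ decomposition used for $S(\vec f)$ in the proof of Theorem~\ref{mainthm2}, adapted to the weight $\big(t/(t+|x-z|)\big)^{\lambda n}$ as in Theorem~\ref{mainthm23}. On the outer region $t>1$ one has the uniform pointwise bound $|\mathcal G_t(\vec f)(z)|\lesssim\prod_{i}\|f_i\|_{L^{\infty}}$ --- the $L^{\infty}$ analogue of \eqref{keyestiw1} and \eqref{keyestiw5}, now with no logarithmic loss since $f_i\in L^{\infty}$ --- and the integral of its square against $\big(t/(t+|x-z|)\big)^{\lambda n}t^{-n-1}\,dz\,dt$ over $\{t>1\}\times\mathbb R^n$ converges precisely because $\lambda>3m+(2\delta+2\gamma)/n$, by the same estimate underlying Theorem~\ref{mainthm23}. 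On the inner region $0<t\le1$ one localizes each $f_i$ on $4\mathcal B$: the purely-local term $g^{\ast\ast}_{\lambda}(f_1\chi_{4\mathcal B},\dots,f_m\chi_{4\mathcal B})$ has finite $L^{2}$ norm by Theorem~\ref{he} (with $p_1=\dots=p_m=2m$; here one first shrinks $\gamma$, if necessary, so that $\gamma<\min\{n(\lambda-2m)/2,\delta\}$, which is harmless because the smoothness condition with exponent $\gamma$ implies the one with every smaller positive exponent), hence is finite almost everywhere, whereas the finiteness of the remaining terms at the single point $x$ follows by estimating $\mathcal G_t$ through the size and regularity conditions on $\mathcal K$ together with $\int_{\mathbb R^n}\mathcal K\,dy_i=0$ (subtract $(f_i)_{4\mathcal B}$ from each $f_i$), exactly as in \cite{leck,sun,he}. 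This gives $g^{\ast\ast}_{\lambda}(\vec f)(x)<\infty$ for every $x$.

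I expect this last step --- upgrading ``a.e.'' to ``everywhere'' --- to be the only delicate point, and it is the only place where the boundedness of the $f_i$, rather than just their BMO norms, is used in an essential way: for $\vec f\in[L^{\infty}]^{m}$ the pointwise bound on $\mathcal G_t(\vec f)$ carries no $\log$-type factor, so both the local and the global contributions to $g^{\ast\ast}_{\lambda}(\vec f)(x)$ can be controlled at each individual point, which is precisely the Leckband--Sun mechanism. Everything else is a transcription of the proofs of Theorems~\ref{mainthm2} and~\ref{mainthm23}, with the obvious cosmetic substitutions ($g^{\ast\ast}_{\lambda}$ for $g^{\ast}_{\lambda}$, a non-convolution kernel for a convolution one), followed by \eqref{BLOsquare} to pass from the squared operator to the operator itself.
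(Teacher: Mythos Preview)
Your proposal is correct and follows essentially the same route the paper takes: the paper does not give a separate proof of this theorem but simply notes (just before Theorems~\ref{mainthm11}--\ref{mainthm13}, and again in Section~5 for the non-convolution kernels) that the $L^{\infty}$--BLO estimates follow from the corresponding BMO--BLO theorems together with the inclusion $L^{\infty}\subset\mathrm{BMO}$ via \eqref{relation11}--\eqref{bmoblo}, while the everywhere-finiteness is attributed to ``similar arguments to those in \cite{he}, \cite{leck} and \cite{sun}.'' Your derivation of the BLO bound from Theorem~\ref{mainthm23} and of the unsquared bound via \eqref{BLOsquare} is exactly this.

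One small caveat on your finiteness sketch: for the purely-local piece $g^{\ast\ast}_{\lambda}(f_1\chi_{4\mathcal B},\dots,f_m\chi_{4\mathcal B})$ you invoke Theorem~\ref{he} to get $L^{2}$-finiteness and hence a.e.\ finiteness, but since your ball $\mathcal B=B(x,1)$ is centred at the very point $x$ at which you want finiteness, ``a.e.'' does not automatically give you the value at $x$. The paper does not address this point either (it simply cites \cite{leck,sun,he}), so you are at the same level of detail; but if you want a self-contained argument you should instead bound the local piece pointwise for $0<t\le1$ using the vanishing and size conditions directly, as in the derivation of \eqref{keyestiw1}, rather than routing through an $L^{2}$ estimate.
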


\section*{Acknowledgment}
The authors were supported by a grant from Xinjiang University under the project``Real-Variable Theory of Function Spaces and Its Applications". This work was also supported by the Natural Science Foundation of China  (No.XJEDU2020Y002 and 2022D01C407).

\begin{center}
References
\end{center}

\end{document}